\DeclareMathAlphabet{\pazocal}{OMS}{zplm}{m}{n}
\newcommand{\B}{\mathbb{B}}
\newcommand{\R}{\mathbb{R}}
\newcommand{\C}{\mathbb{C}}
\newcommand{\Apazo}{\pazocal{A}}
\newcommand{\Epazo}{\pazocal{E}}
\newcommand{\Fpazo}{\pazocal{F}}
\newcommand{\Upazo}{\pazocal{U}}
\newcommand{\Kpazo}{\pazocal{K}}
\newcommand{\Qpazo}{\pazocal{Q}}
\newcommand{\Lpazo}{\pazocal{L}}
\newcommand{\Ppazo}{\pazocal{P}}
\newcommand{\Ypazo}{\pazocal{Y}}
\newcommand{\Dpazo}{\pazocal{D}}
\newcommand{\Opazo}{\pazocal{O}}
\newcommand{\Xpazo}{\pazocal{X}}
\newcommand{\Mcal}{\mathcal{M}}
\newcommand{\Lcal}{\mathcal{L}}
\newcommand{\supp}{\textnormal{supp}}
\newcommand{\dom}{\textnormal{dom}}
\newcommand{\Lip}{\textnormal{Lip}}
\newcommand{\Graph}{\textnormal{Graph}}
\newcommand{\Div}{\textnormal{div}}
\newcommand{\dist}{\textnormal{dist}}
\newcommand{\textbn}[1]{\textnormal{\textbf{#1}}}
\newcommand{\co}{\overline{\textnormal{co}} \hspace{0.05cm}}
\newcommand{\dsf}{\textnormal{\textsf{d}}}
\newcommand{\mf}{\mathfrak{m}}
\newcommand{\Bnu}{\boldsymbol{\nu}}
\newcommand{\BPhi}{\boldsymbol{\Phi}}
\newcommand{\INTDom}[3]{\int_{#2} #1 \textnormal{d} #3}
\newcommand{\INTSeg}[4]{\int_{#3}^{#4} #1 \textnormal{d} #2}
\newcommand{\NormC}[3]{\left\| #1  \right\| _ {C^{#2}(#3)}}
\newcommand{\Norm}[1]{\parallel \hspace{-0.1cm} #1 \hspace{-0.1cm} \parallel}
\newcommand{\tderv}[2]{\tfrac{\textnormal{d} #1}{ \textnormal{d} #2}}
\newcommand{\adm}{\textnormal{adm}}
\newcommand{\Liminf}[1]{\underset{~ #1}{\textnormal{Liminf}}}
\newcommand{\Limsup}[1]{\underset{~ #1}{\textnormal{Limsup}}}
\newcommand{\Lim}[1]{\underset{~ #1}{\textnormal{Lim}}}
\newcommand{\tto}{\rightrightarrows}
\newtheorem{definition}{Definition}[section]
\newtheorem{theorem}[definition]{Theorem}
\newtheorem{proposition}[definition]{Proposition}
\newtheorem{remark}[definition]{Remark}
\newtheorem{corollary}[definition]{Corollary}
\newtheorem{example}[definition]{Example}
\numberwithin{figure}{section}
\numberwithin{equation}{section}
\renewcommand{\epsilon}{\varepsilon}
\newenvironment{taggedhyp}[1]
    {\taggedhypx}
    {\endtaggedhypx}
\newenvironment{taggedhypsing}[1]
    {\taggedhypsingx}
    {\endtaggedhypsingx}
\definecolor{dkgreen}{rgb}{0,0.4,0}
 \title{Set-Valued Koopman Theory for Control Systems}
\author{Benoît Bonnet-Weill\footnote{LAAS-CNRS, Université de Toulouse, CNRS, 7 avenue du colonel Roche, F-31400 Toulouse, France. Emails: \texttt{benoit.bonnet@laas.fr}, \texttt{korda@laas.fr}}  \; and Milan Korda$^{*,}$\footnote{Department of Control Engineering, Faculty of Electrical Engineering,
Czech Technical University in Prague, The Czech Republic}
}
\begin{document}

\maketitle

\begin{abstract}
In this paper, we introduce a new notion of Koopman operator which faithfully encodes the dynamics of controlled systems by leveraging the tools of set-valued analysis. In this context, we propose generalisations of the Liouville and Perron-Frobenius operators, and show that they respectively coincide with proper set-valued analogues of the infinitesimal generator and dual operator of the Koopman semigroup. We also give meaning to the spectra of these set-valued maps and prove an adapted version of the classical spectral mapping theorem relating the eigenvalues of a semigroup with those of its generator. Our approach provides theoretical justifications for existing practical methods in the Koopman community that study control systems by bundling together the Koopman and Liouville operators associated with different control inputs.  
\end{abstract}

{\footnotesize
\textbf{Keywords :} Koopman Operator, Control Systems, Differential Inclusions, Set-Valued Analysis. 

\vspace{0.25cm}

\textbf{MSC2020 Subject Classification :} 28B20, 34A60, 47N20, 93C15.
}



\section{Introduction}

The Koopman operator was introduced in functional analysis as a way to provide an equivalent representation of nonlinear time-evolutions -- or more generally of flow maps -- in terms of infinite-dimensional linear operators. Originating from the seminal papers of Koopman \cite{Koopman1931} and Koopman and von Neumann \cite{koopman1932dynamical} in the early 1930s, it has enjoyed a renewed interest pioneered by the works of Mezi{\'c} and Banaszuk \cite{mezic2004comparison} and Mezi{\'c} \cite{Mezic2005}, and is now a well-established framework for the analysis of dynamical systems. Heuristically, given a one-parameter semigroup $(\Phi_t)_{t \geq 0}$ representing e.g. the integral curves of a dynamical system, the \textit{Koopman operators} are linear transforms whose action is given by 
\begin{equation}
\label{eq:IntroKoopman}
\Kpazo_t(\varphi) := \varphi \circ \Phi_t
\end{equation}
for each (typically real or complex valued) function $\varphi \in \Xpazo$ belonging to some relevant space of observables. Besides its appealing theoretical properties, which grant access to the whole corpus of spectral theory of linear operators to understand properties of nonlinear systems, Koopman operators have served as a foundation for state of the art numerical methods in computational mathematics, most notably the (Extended) Dynamic Mode Decomposition \cite{schmid2010dynamic,williams2015data}. We point the reader towards the books and surveys \cite{bevanda2021koopman,brunton2021modern,budivsic2012applied,kutz2016dynamic,mauroy2020koopman} for an overview of the many applications of this theory.

More recently, the Koopman framework has been used to study systems with external inputs, first regarded as exogenous disturbances \cite{proctor2018generalizing} that one cannot manipulate, and later, starting with the work \cite{korda2018linear}, as controls that one may tune in order to achieve specific goals. When used within a model predictive control framework, the approach proposed in the latter work took the form of a convex optimization problem, contrarily to traditional nonlinear model predictive schemes. This very appealing feature has led to a number of follow-up works aiming at improving the practical aspects of the method, see e.g. \cite{cibulka2022dictionary,korda2020optimal,peitz2019koopman,peitz2020data,shi2022deep}, as well as a number of applications ranging from soft robotics \cite{haggerty2023control} and power grid stabilisation \cite{korda2018power} to control designs for fluids \cite{arbabi2018data,peitz2020data} and quantum systems \cite{goldschmidt2022model}. This list is by no means exhaustive, and we point the interested reader to the surveys \cite{bevanda2021koopman,brunton2021modern} for more references. However, contrary to the abundance of methodological advances and applications of the Koopman framework, a sound theoretical footing for Koopman operators associated with controlled dynamics is still missing at present. Indeed, the work \cite{korda2018linear} defined the Koopman operator with control on the so-called tensor-product system, but did not leverage this definition for theoretical analysis. In \cite{proctor2018generalizing}, the authors considered the Koopman operator with one fixed value of the control input or with control signals subject to a dynamical evolution (e.g., determined by a feedback law), again without providing theoretical insights, while the subsequent work~\cite{peitz2020data} focused on the Koopman operators associated with each individual control input, and interpolated between them. The work~\cite{rosenfeld2021dynamic} likewise considered control-affine systems and defined the corresponding Koopman operator as the collection of the Koopman operators corresponding to the drift and each of the control vector fields.

In this article, we develop a new theoretical framework for Koopman operators in the presence of controls, based on the theory of set-valued analysis. Starting from the seminal works of Filippov \cite{Filippov1962} and Wazewski \cite{Wazewski1961} at the turn of the 1960s, it has been known that rephrasing control problems in terms of differential inclusions provides key insights on the optimal sets of assumptions needed to establish positive results on controlled dynamics, while bringing in powerful tools from geometric and nonsmooth analysis to investigate such systems. Since then, the methods of set-valued analysis have been successfully applied to a large breadth of control problems, ranging from the well-posedness of constrained dynamical systems both in the classical \cite{Bebernes1970} and hybrid \cite{Goebel2006} settings, to Pontryagin \cite{Clarke1976,Vinter1988} and Hamilton-Jacobi \cite{Frankowska1989,Frankowska1995} optimality conditions as well as Lyapunov stability methods \cite{Sontag1995}. This list of references is far from complete, and we point the interested reader to the monographs \cite{Aubin1984,Aubin1990,Clarke,Vinter} for further details. Based on these observations, we propose a comprehensive adaptation of the main concepts of Koopman theory to time-invariant control systems of the form
\begin{equation*}
\dot x(t) = f(x(t),u(t)), 
\end{equation*}
where $f : \R^d \times U \to \R^d$ is locally Lipschitz and sublinear in $x \in \R^d$ as well as continuous in $u \in U$. In this context, one can associate to each admissible control signal $u(\cdot) \in \Upazo$ a unique flow map $(\Phi_{(\tau,t)}^u)_{\tau,t \geq 0} \subset C^0(\R^d,\R^d)$ solving 
\begin{equation*}
\Phi_{(\tau,t)}^u(x) = x + \INTSeg{f\Big( \Phi_{(\tau,s)}^u(x) , u(s)\Big)}{s}{\tau}{t}
\end{equation*}
for all times $\tau,t \geq 0$ and every $x \in \R^d$. This leads us to defining the \textit{set-valued Koopman operators} as the collection of evaluations of a given observable $\varphi \in \Xpazo$ along all possible controlled flows, that is
\begin{equation*}
\Kpazo_{(\tau,t)}(\varphi) := \Big\{ \varphi \circ \Phi_{(\tau,t)}^u ~\, \textnormal{s.t.}~ u(\cdot) \in \Upazo  \Big\}.
\end{equation*}
In other words, $\Kpazo_{(\tau,t)}(\varphi)$ is the set of all observables which are reachable from $\varphi \in \Xpazo$ by right compositions with admissible controlled trajectories, see e.g. Figure \ref{fig:KoopmanDef} below. We discuss some of the fundamental topological properties of these objects in Section \ref{section:Koopman}, and provide a general representation formula for arbitrary time-dependent Koopman observables $(\tau,t) \in [0,T] \times [0,T] \mapsto \psi_{(\tau,t)} \in \Kpazo_{(\tau,t)}(\varphi)$, involving measurable families of control signals. It should be noted that, while autonomous controlled systems can be described up to a time reparametrisation by a one-parameter semigroup, we chose to define the Koopman operators as a two-parameter family in the spirit of \cite{Macesic2018}. The main incentive for doing so lies in the fact that the dynamics of Koopman observables depends on the starting time of the controlled flows, as shown in the next paragraph and more thoroughly in Section \ref{subsection:Liouville} below.

\medskip

\begin{figure}[!ht]
\centering
\resizebox{0.925\textwidth}{!}{
\begin{tikzpicture}
\draw[->] (-0.25,0)--(2.75,0);
\draw[->] (0,-0.25)--(0,1.75);
\draw[->] (0.15,0.15)--(-1,-1);
\draw (-0.5,-0.85) node {\scriptsize $\Xpazo$};
\draw (-0.25,1.5) node {\scriptsize $\C$};
\draw[gray] plot [smooth, tension=0.6] coordinates {(0.25,1)(0.75,0.95)(1.25,0.5)(1.75,0.515)(2.1,0.625)};
\draw[black, dashed] (0.25,-0.45)--(0.25,1);
\draw[black] (0.25,0.975) node {\LARGE $\cdot$};
\draw[black] (0.6,1.25) node {\scriptsize $\varphi(x)$};
\draw[black, dashed] (1.75,-0.725)--(1.75,0.5);
\draw (1.75,-0.735) node {\LARGE $\cdot$};
\draw (2.15,-1) node {\scriptsize $\Phi_t(x)$};
\draw[black] (1.75,0.495) node {\LARGE $\cdot$};
\draw[black] (2.65,0.85) node {\scriptsize $\varphi \circ \Phi_t(x)$};
\draw[black] plot [smooth, tension=0.8] coordinates {(0.25,-0.435)(0.75,-0.35)(1.45,-0.75)(2.25,-0.5)};
\draw (0.25,-0.45) node {\LARGE $\cdot$};
\draw (0.1,-0.6) node {\scriptsize $x$};
\begin{scope}[xshift=6cm]
\draw[->] (-0.25,0)--(2.75,0);
\draw[->] (0,-0.25)--(0,1.75);
\draw[->] (0.15,0.15)--(-1,-1);
\draw (-0.5,-0.85) node {\scriptsize $\Xpazo$};
\draw (-0.25,1.5) node {\scriptsize $\C$};
\filldraw[opacity = 0.8, draw = black, fill = white, thin, bottom color = white, top color = gray!40] plot [smooth, tension=0.6] coordinates {(2.5,-0.4)(2.25,-0.35)(1.45,-0.5)(0.75,-0.35)(0.25,-0.435)(0.75,-1)(1.45,-1.25)(2.25,-1.1)};
\draw[black, dashed, line width =0.05] plot [smooth, tension=0.8] coordinates {(0.25,-0.435)(0.75,-0.45)(1.45,-0.65)(2.25,-0.5)(2.45,-0.55)};
\draw[black, dashed, line width =0.05] plot [smooth, tension=0.8] coordinates {(0.25,-0.435)(0.75,-0.6)(1.45,-0.85)(2.25,-0.7)(2.35,-0.725)};
\draw[black, dashed, line width =0.05] plot [smooth, tension=0.8] coordinates {(0.25,-0.435)(0.75,-0.8)(1.45,-1.05)(2.3,-0.9)};
\draw[black, dashed, line width =0.4] (1.45,-0.5)--(1.45,0.65);
\draw[black, dashed, line width =0.4] (1.2,-1)--(1.2,0.35);
\draw[black, dashed, line width =0.4] (1.35,-0.85)--(1.35,0.5);
\draw[black, dashed, line width =0.4] (1,-1.15)--(1,0.35);
\draw[black, line width = 0.7] plot [smooth, tension=0.8] coordinates {(1.45,-0.5)(1.4,-0.85)(1,-1.15)};
\filldraw[opacity = 0.5, draw = gray, fill = white, thin, bottom color = white, top color = gray!40] plot [smooth, tension=0.8] coordinates {(2.5,0.7)(2.25,0.8)(1.455,0.7)(0.75,0.85)(0.25,1)(0.98,0.35)(1.8,0.15)(2.25,0.2)};
\draw[black, line width = 0.7] plot [smooth, tension=1] coordinates {(1.455,0.7)(1.45,0.55)(0.98,0.35)};
\draw (0.25,-0.455) node {\LARGE $\cdot$};
\draw (0.1,-0.6) node {\scriptsize $x$};
\draw (1.9,-1.6) node {\scriptsize $\{ \Phi_{(0,t)}^u(x)\}_{u(\cdot) \in \Upazo}$};
\draw[black] (2.85,1.1) node {\scriptsize $\{ \varphi \circ \Phi_{(0,t)}^u(x)\}_{u(\cdot) \in \Upazo}$};
\draw[black, dashed] (0.25,-0.45)--(0.25,1);
\draw[black] (0.25,1) node {\LARGE $\cdot$};
\draw[black] (0.65,1.2) node {\scriptsize $\varphi(x)$};
\end{scope}
\end{tikzpicture}
}
\vspace{-0.30cm}
\caption{{\small \textit{The classical Koopman operator associates to an observable $\varphi : \R^d \to \C$ the measurements $\varphi \circ \Phi_t(x)$ along a single flow $\Phi_t(x)$ starting from every possible initial data $x \in \R^d$ (left), whereas the set-valued Koopman operator outputs the measurements of the whole reachable set $\{ \varphi \circ \Phi_{(0,t)}^u(x)\}_{u(\cdot) \in \Upazo}$ along the collection of controlled flows $\{\Phi_{(0,t)}^u(x)\}_{u(\cdot) \in \Upazo}$ starting from any $x \in \R^d$ (right).}}}
\label{fig:KoopmanDef}
\end{figure} 

In Section \ref{section:LiouvillePerron}, which is the core of the manuscript, we put forth relevant set-valued counterparts of the \textit{Liouville} and \textit{Perron-Frobenius} operators. These objects are known to play a pivotal role in Koopman theory as well as many of its applications, the former by being the infinitesimal generator of the Koopman semigroup, and the latter by being the adjoint of the Koopman operators. In Section \ref{subsection:Liouville}, we define the \textit{set-valued Liouville operators} as 
\begin{equation*}
\Lpazo(\varphi) = \Big\{ \nabla_x \varphi \cdot f_u ~\, \mathrm{s.t.}~ u \in U \Big\}
\end{equation*}
for each continuously differentiable observable $\varphi \in \Dpazo$, where $f_u \in C^0(\R^d,\R^d)$ is given by $f_u(x) := f(x,u)$ for all $(x,u) \in \R^d \times U$. We then prove that, whenever the set of controlled vector fields is convex, the Liouville operator is the infinitesimal (set-valued) generator of the Koopman semigroup
\begin{equation*}
\Lim{t \to \tau} \frac{\Kpazo_{(\tau,t)}(\varphi)-\varphi}{t-\tau} = \Lpazo(\varphi), 
\end{equation*}
where the limit is understood in the sense of Kuratowski-Painlevé. In addition, when $f : \R^d \times U \to \R^d$ is continuously differentiable in $x \in \R^d$, we prove that time-dependent Koopman observables given by $(\tau,t) \in [0,T] \times [0,T] \mapsto \psi_{(\tau,t)} := \varphi \circ \Phi_{(\tau,t)}^u \in \Dpazo$ for some fixed signal $u(\cdot) \in \Upazo$ coincide precisely with the strong solutions of the differential inclusion 
\begin{equation*}
\partial_{\tau} \psi_{(\tau,t)} \in -\Lpazo(\psi_{(\tau,t)})
\end{equation*}
in the space of observables $\Dpazo$. In Section \ref{subsection:Perron}, we shift our focus to the investigation of duality results for the set-valued Koopman operators. In this context, we propose the following definition for the \textit{set-valued Perron-Frobenius} operators 
\begin{equation*}
\Ppazo_{(\tau,t)}(\mu) := \Big\{ \Phi_{(\tau,t) \sharp \,}^u \mu ~\, \text{s.t.}~ u(\cdot) \in \Upazo \Big\}, 
\end{equation*}
where ``$\sharp$'' stands for the usual image measure operation. We then show that the weak-$^*$ convex hull of the latter coincide with the set-valued adjoint of the Koopman operators, defined in the sense of Ioffe \cite{Ioffe1981}. We also leverage this notion of duality to prove that, again when the dynamics is convex, the infinitesimal generator of the Perron-Frobenius semigroup is the adjoint of the Liouville operator, namely 
\begin{equation*}
\Lim{t \to \tau} \frac{\Ppazo_{(\tau,t)}(\mu)-\mu}{t-\tau} = \Lpazo^*(\mu)
\end{equation*}
in the weak-$^*$ topology. To our surprise, we discovered that the underlying dynamics
\begin{equation*}
\partial_t \mu_{(\tau,t)} \in \Lpazo^*(\mu_{(\tau,t)})
\end{equation*}
coincided -- at least formally -- with the notion of \textit{continuity inclusion} introduced by the first author and Frankowska in \cite{ContInc,ContIncPp} in the context of meanfield control. Lastly, in Section \ref{subsection:Spectral}, we prove a set-valued version of the usual spectral mapping theorem (see e.g. \cite[Chapter IV - Theorem 3.7]{Engel2001}), which relates the point spectra of the Liouville operator and Koopman semigroup. Therein, we show in particular that 
\begin{equation*}
e^{(t-\tau) \sigma_p(\Lpazo)} \subset \sigma_p(\Kpazo_{(\tau,t)}), 
\end{equation*}
which provides grounding to pre-existing works such as  \cite{peitz2019koopman,peitz2020data} in which spectral properties of Koopman operators for control systems are investigated by exponentiating the spectra of the Liouville operators associated with a finite collection of controls.  

The manuscript is organised as follows. In Section \ref{section:Preliminaries}, we start by exposing preliminary notions of functional and set-valued analysis as well as control theory. In Section \ref{section:Koopman}, we define the set-valued Koopman operators and study their main properties. We then move to the investigation of the set-valued Liouville and Perron-Frobenius operators in Section \ref{section:LiouvillePerron}, wherein we show that the latter are respectively related to the infinitesimal generator and adjoint of the Koopman semigroup. We then close this section with the statement of a set-valued counterpart of spectral mapping theorem, and provide the proof of a regularity result for controlled flows in Appendix \ref{section:AppendixContinuity}.  


\section{Preliminaries}
\label{section:Preliminaries}
\setcounter{equation}{0} \renewcommand{\theequation}{\thesection.\arabic{equation}}

In this first section, we collect preliminary material on integration theory, set-valued analysis and controlled systems, for which we point the reader to the monographs \cite{AnalysisBanachSpaces}, \cite{Aubin1990} and \cite{Clarke} respectively. 


\subsection{Functional analysis and integration}

In what follows, we denote by $(\Omega,\Apazo,\mf)$ a complete $\sigma$-finite measure space, and recall following \cite[Section 8.1]{Aubin1990} that a map $f : \Omega \to \Xpazo$ valued in a Polish space $(\Xpazo,\dsf_{\Xpazo}(\cdot,\cdot))$ is \textit{$\mf$-measurable} if the set
\begin{equation*}
f^{-1}(\Opazo) := \Big\{ \omega \in \Omega ~\, \textnormal{s.t.}~ f(\omega) \in \Opazo \Big\} \subset \Omega
\end{equation*} 
is $\mf$-measurable for each open set $\Opazo \subset \Xpazo$. Throughout the manuscript, we shall denote by $\Lcal^d$ the standard Lebesgue outer measure defined over $\R^d$. Below, we recall the concept of integrability in the sense of Bochner for maps valued in separable Banach spaces, see for instance \cite[Chapter 1]{AnalysisBanachSpaces}. 

\begin{definition}[Bochner integrable maps]
An $\mf$-measurable map $f : \Omega \to \Xpazo$ valued in a separable Banach space $ (\Xpazo,\|\cdot\|_{\Xpazo})$ is said to be \textnormal{Bochner integrable} if 
\begin{equation*}
\INTDom{\Norm{f(\omega)}_{\Xpazo}}{\Omega}{\mf(\omega)} < +\infty.
\end{equation*}
The collection of all such maps is a separable Banach space denoted by $L^1(\Omega,\Xpazo)$. 
\end{definition}

%
%

Given a locally convex topological vector space $\Epazo$, we denote by $\Epazo^*$ its topological dual, i.e. the collection of all bounded complex-valued linear functionals over $\Epazo$, and write $\langle \cdot,\cdot\rangle_{\Epazo}$ for the underlying duality pairing, see e.g. \cite[Part 1 -- Chapter 1]{Rudin1991}. Throughout the paper, we always assume that a dual space is endowed with the weak-$^*$ topology, see e.g. \cite[Part 1 -- Chapter 3]{Rudin1991}. In the next proposition, we recall a fine weak compactness criterion in $L^1(\Omega,\Xpazo)$ excerpted from \cite[Corollary 2.6]{Diestel1993}.

\begin{proposition}[A weak $L^1$-compactness criterion for the Bochner integral]
\label{prop:WeakCompactness}
Let $(\Xpazo,\Norm{\cdot}_{\Xpazo})$ be a separable Banach space and $(f_n(\cdot)) \subset L^1(\Omega,\Xpazo)$. Suppose that there exists a map $k(\cdot) \in L^1(\Omega,\R_+)$ along with a convex and compact set $K \subset \Xpazo$ such that 
\begin{equation*}
\Norm{f_n(\omega)}_{\Xpazo} \, \leq k(\omega) \qquad \text{and} \qquad f_n(\omega) \in K    
\end{equation*}
for $\mf$-almost every $\omega \in \Omega$. Then, there exists a subsequence $(f_{n_k}(\cdot))$ that converges weakly to some $f(\cdot) \in L^1(\Omega,\Xpazo)$ which satisfies $f(\omega) \in K$ for $\mf$-almost every $\omega \in \Omega$. In particular, it holds that
\begin{equation*}
\INTDom{\big\langle \Bnu(\omega) , f(\omega) - f_{n_k}(\omega) \big\rangle_{\Xpazo} \,}{\Omega}{\mf(\omega)} ~\underset{k \to + \infty}{\longrightarrow}~ 0    
\end{equation*}
for each map $\Bnu : \Omega \to \Xpazo^*$ such that $\omega \in \Omega \mapsto \langle \Bnu(\omega) , f \, \rangle_{\Xpazo} \in \R$ is $\mf$-measurable whenever $f \in \Xpazo$ and satisfying $\textnormal{ess-sup}_{\omega \in \Omega} \Norm{\Bnu(\omega)}_{\Xpazo^*} < +\infty$.
\end{proposition}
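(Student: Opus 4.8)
The plan is to cast the statement within the vector-valued Dunford--Pettis theory and to split it into three tasks: extracting a weakly convergent subsequence in $L^1(\Omega,\Xpazo)$, showing that the pointwise membership in $K$ is inherited by the weak limit, and recognising the prescribed maps $\Bnu(\cdot)$ as elements of the dual of $L^1(\Omega,\Xpazo)$. First I would note that the domination $\Norm{f_n(\omega)}_\Xpazo \leq k(\omega)$ with $k(\cdot) \in L^1(\Omega,\R_+)$ makes the sequence bounded in $L^1(\Omega,\Xpazo)$ and uniformly integrable, since on any measurable $A \subset \Omega$ one has $\INTDom{\Norm{f_n(\omega)}_\Xpazo}{A}{\mf(\omega)} \leq \INTDom{k(\omega)}{A}{\mf(\omega)}$, and the right-hand side tends to $0$ as $\mf(A) \to 0$ by absolute continuity of the integral of $k(\cdot)$. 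The second hypothesis $f_n(\omega) \in K$ with $K$ compact furnishes the tightness in the range that compensates for the possible lack of reflexivity of $\Xpazo$: confining all values to a single compact convex set is exactly what the vector-valued Dunford--Pettis criterion requires, and this is the content of \cite[Corollary 2.6]{Diestel1993}. The sequence $\{f_n(\cdot)\}$ is therefore relatively weakly compact in $L^1(\Omega,\Xpazo)$, and the Eberlein--\v{S}mulian theorem yields a subsequence $(f_{n_k}(\cdot))$ converging weakly to some $f(\cdot) \in L^1(\Omega,\Xpazo)$. To keep this step transparent I would expose its engine, a scalarisation-and-diagonalisation argument using the separability of $\Xpazo$: fixing a countable norming family $(x_j^*) \subset \Xpazo^*$, each scalar sequence $\omega \mapsto \langle x_j^*, f_n(\omega)\rangle_\Xpazo$ is dominated by $\Norm{x_j^*}_{\Xpazo^*} k(\omega)$ and hence uniformly integrable, so a diagonal extraction produces one subsequence along which all scalarisations converge, the norm-compactness of $K$ being what allows these scalar limits to be recollated into a bona fide $\Xpazo$-valued map.

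Second, I would verify that $f(\omega) \in K$ for $\mf$-almost every $\omega$, which is where convexity of $K$ is indispensable. By Mazur's lemma the weak limit $f(\cdot)$ is the strong $L^1$-limit of a sequence of convex combinations of the $f_{n_k}(\cdot)$; each such combination takes values in the convex set $K$ almost everywhere, and passing to a further subsequence converging $\mf$-almost everywhere in norm, together with the closedness of $K$, forces $f(\omega) \in K$ almost everywhere.

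Third, the displayed ``in particular'' assertion follows directly from weak convergence once the maps $\Bnu(\cdot)$ are identified with bounded linear functionals on $L^1(\Omega,\Xpazo)$. For any $g(\cdot) \in L^1(\Omega,\Xpazo)$, the scalar measurability assumed on $\Bnu(\cdot)$ combined with the Pettis measurability theorem (approximating $g(\cdot)$ by simple functions) ensures that $\omega \mapsto \langle \Bnu(\omega), g(\omega)\rangle_\Xpazo$ is $\mf$-measurable, and the estimate $|\langle \Bnu(\omega), g(\omega)\rangle_\Xpazo| \leq \big( \textnormal{ess-sup}_{\omega \in \Omega} \Norm{\Bnu(\omega)}_{\Xpazo^*} \big) \Norm{g(\omega)}_\Xpazo$ shows that $g \mapsto \INTDom{\langle \Bnu(\omega), g(\omega)\rangle_\Xpazo}{\Omega}{\mf(\omega)}$ is continuous on $L^1(\Omega,\Xpazo)$. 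Applying this functional to the weak convergence $f_{n_k} \rightharpoonup f$ then gives the claimed limit.

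I expect the main obstacle to be the first task. In the absence of the Radon--Nikod\'ym property the dual of $L^1(\Omega,\Xpazo)$ is strictly larger than $L^\infty(\Omega,\Xpazo^*)$, so one cannot simply transcribe the scalar Dunford--Pettis argument; it is the delicate interplay between uniform integrability and confinement to the single compact convex range $K$ that delivers weak compactness, precisely the point settled in \cite{Diestel1993}. The inheritance of the pointwise constraint by the weak limit is the second subtle point, and is exactly why the convexity of $K$ cannot be removed.
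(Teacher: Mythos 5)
The paper gives no proof of this proposition at all: it is stated as a verbatim recollection of \cite[Corollary 2.6]{Diestel1993}, which is precisely the result you invoke for the weak-compactness core, so your approach coincides with the paper's, and your supplementary arguments --- Mazur's lemma plus almost-everywhere convergence of convex combinations to obtain $f(\omega) \in K$ for $\mf$-almost every $\omega \in \Omega$, and the identification of each scalarly measurable, essentially bounded $\Bnu : \Omega \to \Xpazo^*$ with an element of $L^1(\Omega,\Xpazo)^*$ via simple-function approximation --- are correct and standard completions of the two conclusions the citation does not spell out. One caveat: your expository ``scalarisation-and-diagonalisation'' sketch is not a proof as stated, since recollating the countably many scalar weak limits into a genuine $\Xpazo$-valued function, and upgrading convergence against countably many functionals of the form $x_j^* \mathds{1}_A$ to weak convergence against all of $L^1(\Omega,\Xpazo)^*$ (which is strictly larger than $L^\infty(\Omega,\Xpazo^*)$ absent the Radon--Nikod\'ym property, as you note), is the entire difficulty settled in \cite{Diestel1993} --- but since you present that sketch as heuristic rather than load-bearing, nothing in your argument breaks.
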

 
Given $k \in \{0,1\}$, we denote by $(C^k_c(\R^d,\C),\NormC{\cdot}{k}{\R^d,\C})$ the separable normed space of $k$-times continuously differentiable maps with compact support endowed with the relevant supremum norm, and use the notation $(C^k_0(\R^d,\C),\NormC{\cdot}{k}{\R^d,\C})$ for its norm completion. By Riesz's representation theorem (see e.g. \cite[Theorem 1.54]{AmbrosioFuscoPallara}), one has that $C_c^0(\R^d,\C)^* \simeq \Mcal(\R^d,\C)$ where $\Mcal(\R^d,\C)$ is the vector space of finite complex-valued Radon measures. In this context, given an element $\mu \in \Mcal(\R^d,\C)$ and some $f \in C^0(\R^d,\R^d)$, we define the so-called \textit{divergence distribution} $\Div_x(f \mu) \in C^1_c(\R^d,\C)^*$ by duality as 
\begin{equation}
\label{eq:DivergenceDef}
\langle \Div_x(f \mu) , \zeta\rangle_{C^1_c(\R^d,\C)} := -\INTDom{\nabla_x \zeta(x) \cdot f(x)}{\R^d}{\mu(x)}
\end{equation}
for all $\zeta \in C^1_c(\R^d,\C)$. We also recall that the \textit{image} -- or \textit{pushforward} -- of a finite Radon measure $\mu \in \Mcal(\R^d,\C)$ through a Borel map $f : \R^d \to \R^d$ is the unique measure which satisfies $f_{\sharp} \mu(B) := \mu(f^{-1}(B))$ for each Borel set $B \subset \R^d$. Besides, the latter is characterised by the change of variable formula
\begin{equation}
\label{eq:ImageMeasure}
\INTDom{\varphi(x)}{\R^d}{(f_{\sharp}\mu)(x)} = \INTDom{\varphi \circ f(x)}{\R^d}{\mu(x)} 
\end{equation}
for every Borel map $\varphi : \R^d \to [0,+\infty]$. In the next definition, we recollect the notion of \textit{weak-$^*$ convergence} for finite Radon measures (see e.g. \cite[Definition 1.58]{AmbrosioFuscoPallara}).

\begin{definition}[Weak-$^*$ convergence of measures]
A sequence $(\mu_n) \subset \Mcal(\R^d,\C)$ converges towards $\mu \in \Mcal(\R^d,\C)$ for the \textnormal{weak-$^*$ topology} provided that 
\begin{equation}
\label{eq:weakstar}
\INTDom{\zeta(x)}{\R^d}{\mu_n(x)} ~\underset{n \to +\infty}{\longrightarrow}~ \INTDom{\zeta(x)}{\R^d}{\mu(x)}
\end{equation}
for each $\zeta \in C^0_c(\R^d,\C)$. 
\end{definition}

In what ensues, we will also consider elements of the space $C^0(\R^d,\R^d)$ of continuous functions defined over the whole of $\R^d$. In that case, the adequate topology to consider is that of \textit{local uniform convergence}, whose definition is recalled below. Therein, we write $B(0,R) \subset \R^d$ for the standard closed Euclidean ball of radius $R > 0$.

\begin{definition}[Topology of local uniform convergence]
A sequence of maps $(f_n) \subset C^0(\R^d,\R^d)$ converges \textnormal{locally uniformly} towards $f \in C^0(\R^d,\R^d)$ provided that
\begin{equation*}
\NormC{f - f_n}{0}{K,\R^d} ~\underset{n \to +\infty}{\longrightarrow}~ 0    
\end{equation*}
for each compact set $K \subset \R^d$. The underlying topology is induced by the translation invariant metric 
\begin{equation*}
\dsf_{cc}(f,g) := \sum_{k =1}^{+\infty} 2^{-k} \min \Big\{ 1 \, , \, \NormC{f-g}{0}{B(0,k),\R^d} \hspace{-0.05cm} \Big\}, 
\end{equation*}
which is defined for each $f,g \in C^0(\R^d,\R^d)$.
\end{definition}

Following the general results from \cite[Chapter 7 -- Theorems 12 and 13]{Kelley1975} and \cite[Theorem 6]{Warner1958}, it can be shown $(C^0(\R^d,\R^d),\dsf_{cc}(\cdot,\cdot))$ is a \textit{Fréchet space}, i.e. a complete separable locally convex topological vector space whose topology is induced by a translation invariant metric. More classically, it is known that $(C^0(K,\R^d),\NormC{\cdot}{0}{K,\R^d})$ is a separable Banach space for each compact set $K \subset \R^d$.


\subsection{Set-valued analysis}
\label{subsection:Setvalued}

In this section, we recall preliminary material pertaining to set-valued analysis, for which we point the reader to the reference monograph \cite{Aubin1990}. In what follows, unless further specifications are given, we suppose that $\Xpazo$ and $\Ypazo$ are separable locally convex topological vector spaces.

\begin{definition}[Set-valued maps]
We say that $\Fpazo : \Xpazo \tto \Ypazo$ is a \textnormal{set-valued map} if $\Fpazo(x) \subset \Ypazo$ for each $x \in \Xpazo$. Its \textit{domain} and \textit{graph} are defined respectively by
\begin{equation*}
\dom(\Fpazo) := \Big\{ x \in \Xpazo ~\, \textnormal{s.t.}~ \Fpazo(x) \neq \emptyset \Big\}, ~~ \Graph(\Fpazo) := \Big\{ (x,y) \in \Xpazo \times \Ypazo ~\,\textnormal{s.t.}~ y \in \Fpazo(x) \Big\}.
\end{equation*}
\end{definition}

Below, we recollect classical regularity notions for set-valued mappings, starting with the standard concepts of continuity and Lipschitz continuity, and proceed with that of measurability. Therein, we suppose that $(\Xpazo,\dsf_{\Xpazo}(\cdot,\cdot))$ and $(\Ypazo,\dsf_{\Ypazo}(\cdot,\cdot))$ are Polish spaces, write $\B_{\Xpazo}(x,r)$ for the closed ball of radius $r >0$ centred at some $x \in \Xpazo$ and let $\B_{\Xpazo}(\Qpazo,r) := \{y \in \Xpazo ~\, \text{s.t.}~ \dsf_{\Xpazo}(x,y) \leq r ~~ \text{for some $x \in \Qpazo$}\}$ for any $\Qpazo \subset \Xpazo$.

\begin{definition}[Continuity of set-valued maps]
\label{def:Continuity}
A set-valued map $\Fpazo : \Xpazo \tto \Ypazo$ is said to be \textnormal{continuous} at $x \in \dom(\Fpazo)$ if  both the following conditions hold. 
\begin{enumerate}
\item[$(i)$] $\Fpazo$ is \textnormal{lower-semicontinuous}, i.e. for any $\epsilon > 0$ and all $y \in \Fpazo(x)$, there exists $\delta > 0$ such that 
\begin{equation*}
\Fpazo(x') \cap \B_{\Ypazo}(y,\epsilon) \neq \emptyset
\end{equation*}
for each $x' \in \B_{\Xpazo}(x,\delta)$.
\item[$(ii)$] $\Fpazo$ is \textnormal{upper-semicontinuous}, i.e. for any $\epsilon > 0$, there exists $\delta >0$ such that 
\begin{equation*}
\Fpazo(x') \subset \B_{\Ypazo}(\Fpazo(x),\epsilon)
\end{equation*}
for each $x' \in \B_{\Xpazo}(x,\delta)$.
\end{enumerate} 
\end{definition}

\begin{definition}[Lipschitz continuity of set-valued maps]
A set-valued mapping $\Fpazo :\Xpazo \tto \Ypazo$ is \textnormal{Lipschitz continuous} with constant $L > 0$ provided that 
\begin{equation*}
\Fpazo(x') \subset \B_{\Ypazo} \Big(\Fpazo(x) , L \, \dsf_{\Xpazo}(x,x') \Big)
\end{equation*}
for all $x,x' \in \dom(\Fpazo)$. In particular, $\Fpazo$ is continuous at every point $x \in \dom(\Fpazo)$.
\end{definition}

\begin{definition}[Measurable set-valued maps and selections]
A set-valued map $\Fpazo : \Omega \tto \Xpazo$ is said to be \textnormal{$\mf$-measurable} -- or more simply measurable -- if for any open set $\Opazo \subset \Xpazo$, the preimage
\begin{equation*}
\Fpazo^{-1}(\Opazo) := \Big\{ \omega \in \Omega ~\, \textnormal{s.t.}~ \Fpazo(\omega) \cap \Opazo \neq \emptyset \Big\} \subset \Omega
\end{equation*}
is $\mf$-measurable. Moreover, a measurable map $f : \Omega \to \Xpazo$ is called a \textnormal{measurable selection} of $\Fpazo : \Omega \tto \Xpazo$ provided that $f(\omega) \in \Fpazo(\omega)$ for $\mf$-almost every $\omega \in \Omega$.
\end{definition}

One may note that whenever $\Fpazo(\omega) = \{f(\omega)\}$ is single-valued, this definition amounts to requiring that $f : \Omega \to \Xpazo$ be $\mf$-measurable, since then
\begin{equation*}
\Fpazo^{-1}(\Opazo) = \Big\{ \omega \in \Omega ~\, \textnormal{s.t.}~ \{f(\omega)\} \cap \Opazo \neq \emptyset \Big\} = f^{-1}(\Opazo).    
\end{equation*}
In the following theorem, we recall a variant of the fundamental Filippov selection principle, whose statement may be found e.g. in \cite[Theorem 8.2.10]{Aubin1990}. 

\begin{theorem}[Filippov's measurable selection principle]
\label{thm:FilippovSel}
Let $\Fpazo : \Omega \tto \Xpazo$ be an $\mf$-measurable set-valued map with nonempty closed images and $\Psi : \Omega \times \Xpazo \to \Ypazo$ be a mapping that is $\mf$-measurable in $\omega \in \Omega$ and continuous in $x \in \Xpazo$. Then, for every $\mf$-measurable map $\psi : \Omega \to \Ypazo$ satisfying
\begin{equation*}
\psi(\omega) \in \Psi(\omega,\Fpazo(\omega)) := \Big\{ \Psi(\omega,f) ~\, \textnormal{s.t.}~ f \in \Fpazo(\omega) \Big\},
\end{equation*}
there exists a measurable selection $\omega \in \Omega \mapsto f(\omega) \in \Fpazo(\omega)$ such that $\psi(\omega) = \Psi(\omega,f(\omega))$ for $\mf$-almost every $\omega \in \Omega$.
\end{theorem}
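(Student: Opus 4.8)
The plan is to recast the problem as the existence of a measurable selection for the fibered solution map
\begin{equation*}
\Gpazo(\omega) := \Big\{ x \in \Fpazo(\omega) ~\, \textnormal{s.t.}~ \Psi(\omega,x) = \psi(\omega) \Big\} = \Fpazo(\omega) \cap \Psi(\omega,\cdot)^{-1}\big( \{\psi(\omega)\} \big),
\end{equation*}
so that any measurable selection $\omega \mapsto f(\omega) \in \Gpazo(\omega)$ automatically satisfies both $f(\omega) \in \Fpazo(\omega)$ and $\Psi(\omega,f(\omega)) = \psi(\omega)$ for $\mf$-almost every $\omega \in \Omega$, which is exactly the conclusion. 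First I would record the two soft structural properties of $\Gpazo$. Its values are nonempty for $\mf$-almost every $\omega$: this is precisely the standing assumption $\psi(\omega) \in \Psi(\omega,\Fpazo(\omega))$. Its values are closed: since $x \mapsto \Psi(\omega,x)$ is continuous, the preimage $\Psi(\omega,\cdot)^{-1}(\{\psi(\omega)\})$ is closed in $\Xpazo$, and intersecting it with the closed set $\Fpazo(\omega)$ preserves closedness.

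The crux is therefore to show that $\Gpazo$ is itself a measurable set-valued map, after which a measurable selection is delivered by the standard selection theorems for closed-valued measurable maps into a Polish space. The cleanest route I would take is through graph measurability: I claim that
\begin{equation*}
\Graph(\Gpazo) = \Graph(\Fpazo) \cap \Big\{ (\omega,x) \in \Omega \times \Xpazo ~\, \textnormal{s.t.}~ \dsf_{\Ypazo}\big( \Psi(\omega,x) , \psi(\omega) \big) = 0 \Big\}
\end{equation*}
belongs to the product $\sigma$-algebra $\Acal \otimes \Bcal(\Xpazo)$. For the first factor one invokes the classical equivalence, valid for closed-valued maps into a Polish space (see \cite{Aubin1990}), between measurability of $\Fpazo$ and measurability of $\Graph(\Fpazo)$. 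For the second factor, one observes that $(\omega,x) \mapsto \dsf_{\Ypazo}(\Psi(\omega,x),\psi(\omega))$ is of Carathéodory type, i.e. $\mf$-measurable in $\omega$ and continuous in $x$, which combines the measurability of $\omega \mapsto \Psi(\omega,x)$ and of $\psi$, the continuity of $x \mapsto \Psi(\omega,x)$, and the joint continuity of $\dsf_{\Ypazo}$. A Carathéodory function on a separable space is jointly $\Acal \otimes \Bcal(\Xpazo)$-measurable, so its zero set lies in $\Acal \otimes \Bcal(\Xpazo)$, and the intersection of two such sets is again measurable.

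With $\Gpazo$ shown to have nonempty closed values and $\Acal \otimes \Bcal(\Xpazo)$-measurable graph, I would conclude by the projection/selection theorem for complete measure spaces, available here because $(\Omega,\Acal,\mf)$ is complete and $\sigma$-finite and $\Xpazo$ is Polish: the set $\{\omega \in \Omega ~\textnormal{s.t.}~ \Gpazo(\omega) \neq \emptyset\}$ is $\mf$-measurable and $\Gpazo$ admits an $\mf$-measurable selection $f$, which is the desired map. The main obstacle is entirely contained in this measurability step, and more precisely in justifying the graph-measurability of $\Fpazo$ together with the joint measurability of the Carathéodory map; everything else is routine. An alternative that bypasses the projection theorem would be to fix a Castaing representation $\Fpazo(\omega) = \cl\{ f_n(\omega) \}_{n \in \N}$ by measurable selections $f_n$ and to build $f$ as a pointwise limit of a measurably chosen subsequence along which $\dsf_{\Ypazo}(\Psi(\omega,f_n(\omega)),\psi(\omega)) \to 0$; this is more hands-on but relies on the very same continuity of $\Psi(\omega,\cdot)$ to pass the equality $\Psi(\omega,f(\omega)) = \psi(\omega)$ to the limit.
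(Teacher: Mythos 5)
Your main argument is correct, but note first that the paper itself contains no proof of this statement: it is explicitly \emph{recalled} from the literature, with the proof deferred to \cite[Theorem 8.2.10]{Aubin1990}, so the comparison can only be made against that reference. Measured against it, you reach the same conclusion by a genuinely different route. Both arguments start from the solution map $\Gpazo(\omega) = \Fpazo(\omega) \cap \{x \in \Xpazo ~\, \textnormal{s.t.}~ \Psi(\omega,x)=\psi(\omega)\}$ and both exploit the Carath\'eodory function $(\omega,x)\mapsto \dsf_{\Ypazo}(\Psi(\omega,x),\psi(\omega))$, but the textbook proof establishes the \emph{measurability of $\Gpazo$ itself} -- via a Castaing representation of $\Fpazo$ and the measurability of solution maps of the form $\{x \in \Fpazo(\omega) ~\,\textnormal{s.t.}~ \varphi(\omega,x)\le 0\}$ for Carath\'eodory $\varphi$ -- and then concludes with the Kuratowski--Ryll-Nardzewski selection theorem; this requires no completeness of the measure space and produces a selection at \emph{every} $\omega$, not merely $\mf$-almost every one. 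Your route instead goes through graph measurability of $\Gpazo$ in $\Apazo\otimes\Bcal(\Xpazo)$ (graph measurability of the closed-valued measurable $\Fpazo$, plus joint measurability of the Carath\'eodory map on a separable domain) and then the von Neumann--Aumann projection/selection theorem, which is precisely why you must invoke completeness and $\sigma$-finiteness of $(\Omega,\Apazo,\mf)$; since the paper's standing assumptions grant exactly that, your proof is valid in this setting, and it buys brevity at the cost of a heavier selection theorem and an almost-everywhere conclusion (which is all the statement asks for). One caveat: your ``hands-on'' alternative is under-specified as written -- a subsequence of a Castaing representation along which $\dsf_{\Ypazo}(\Psi(\omega,f_n(\omega)),\psi(\omega))\to 0$ need not converge pointwise in $\Xpazo$, so making that variant rigorous requires the usual iterative construction of a measurable Cauchy sequence of approximate selections with geometrically decreasing errors, using completeness of $\Xpazo$ and closedness of $\Fpazo(\omega)$, before passing to the limit in $\Psi(\omega,\cdot)$.
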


In what follows, we recollect the definitions of lower and upper limits for sequences of sets. Therein, we shall use the notation  
\begin{equation*}
\dist_{\Xpazo}(x \, ; \Qpazo) := \inf_{y \in \Qpazo} \dsf_{\Xpazo}(x,y)
\end{equation*}
for the distance between a point $x \in \Xpazo$ and an arbitrary set $\Qpazo \subset \Xpazo$. 

\begin{definition}[Kuratowski-Painlevé limit of sequences of sets]
\label{def:Kuratowski-Painlevé}
Given a sequence of sets $(K_n)$, we define its \textnormal{lower limit} as
\begin{equation*}
\Liminf{n \to +\infty} \, K_n := \bigg\{ x \in \Xpazo ~\,\textnormal{s.t.}~ \lim_{n \to +\infty} \, \dist_{\Xpazo}(x \, ; K_n) = 0 \bigg\}, 
\end{equation*}
as well as its \textnormal{upper limit} by
\begin{equation*}
\Limsup{n \to +\infty} \, K_n := \bigg\{ x \in \Xpazo ~\,\textnormal{s.t.}~ \liminf_{n \to +\infty} \, \dist_{\Xpazo}(x \, ; K_n) = 0 \bigg\}.
\end{equation*}
In this context, we say that $K \subset\Xpazo$ is the \textnormal{Kuratowski-Painlevé limit} of a sequence of sets $(K_n)$ if 
\begin{equation*}
K = \Lim{n \to +\infty} \, K_n := \Liminf{n \to +\infty} \, K_n = \Limsup{n \to +\infty} \, K_n.
\end{equation*}
\end{definition}

We close this section by recollecting the definition of closed processes and fans between locally convex topological vector spaces, see Figure \ref{fig:Fans} below for an illustration. The latter were introduced respectively by Rockafellar at the end of the 1960s, see e.g. \cite[Chapter 9]{Rockafellar2015}, and Ioffe in the 1980's, see e.g. \cite{Ioffe1981}, with the goal to adapt the main and most appealing properties of closed and linear operators to the set-valued setting.

\begin{definition}[Closed processes and fans]
\label{def:ClosedProcess}
A set-valued map $\Fpazo : \Xpazo \tto \Ypazo$ is called a \textnormal{closed process} if its graph is a closed cone. A closed process $\Fpazo : \Xpazo \tto \Ypazo$ with convex images is called a \textnormal{fan} provided that 
\begin{equation*}
0 \in \Fpazo(0) \qquad \text{and} \qquad \Fpazo(x_1 + x_2) \subset \overline{\Fpazo(x_1) + \Fpazo(x_2)} 
\end{equation*}
for each $x_1,x_2 \in \dom(\Fpazo)$, namely if it is positively homogeneous and subadditive. 
\end{definition}

\begin{example}[Collections of linear operators as closed processes and fans]
To see why these objects can indeed be seen as generalisations of bounded linear operators, suppose that $\Fpazo(x) := \{Ax\}$ for each $x \in \Xpazo$ and some bounded linear map $ A : \Xpazo \to \Ypazo$. Then, one may easily check that 
\begin{equation*}
\Graph(\Fpazo) = \big\{ (x,Ax) ~\, \textnormal{s.t.}~ x \in \Xpazo \big\}= \Graph(A)
\end{equation*}
is a linear space and that the images of $\Fpazo : \Xpazo \tto \Ypazo$ are convex, hence the latter defines a fan. In the case in which $A : \dom(A) \to \Ypazo$ is a linear operator defined over some domain $\dom(A) \subset \Xpazo$, the mapping $\Fpazo : \Xpazo \tto \Ypazo$ is also a fan. More interestingly, if $\Apazo$ is a convex and closed collection of linear operators from $\Xpazo$ to $\Ypazo$, then $\Fpazo(x) := \{ Ax ~\, \textnormal{s.t.}~ A \in \Apazo \,\}$ defines a fan.      
\end{example} 

\begin{figure}[!ht]
\centering
\resizebox{0.75\textwidth}{!}{
\begin{tikzpicture}
\begin{scope}
\draw[black, domain=0:2, line width = 0.4mm] plot (\x,{0.75*\x});
\draw[black, domain=0:2, line width = 0.4mm] plot (2-\x,{(-1.5+0.75*\x)}); 
\draw[black, domain=0:-2, line width = 0.4mm] plot (\x,{0.75*\x});
\draw[black, domain=0:-2, line width = 0.4mm] plot (-2-\x,{1.5+0.75*\x}); 
\fill[opacity = 1, thin, bottom color = white, top color = gray!40, domain = 0:2, variable=\x]  (0,0) -- plot (\x,{0.75*\x}) -- plot (2-\x,{-1.5+0.75*\x}) -- cycle;  
\fill[opacity = 1, thin, bottom color = white, top color = gray!40, domain = 0:-2, variable=\x]  (0,0) -- plot (\x,-0.75*\x) -- plot (-2-\x,{-(1.5+0.75*\x)}) -- cycle;  
\foreach \x in {-3.5,-2.5,-1.5,-0.5,0.5,1.5,2.5,3.5}
	\draw[dashed, line width = 0.01mm] (\x/2,-0.75*\x/2)--(\x/2,0.75*\x/2); 
\draw[->] (-2.25,0) -- (2.5,0); 
\draw[->] (0,-1.5) -- (0,1.5);
\draw (2.5,-0.3) node {\small $\Xpazo$};
\draw (0.35,1.5) node {\small $\Ypazo$};
\end{scope}
\begin{scope}[xshift=8cm]
\fill[opacity = 1, thin, bottom color = white, top color = gray!40, domain = 0:2, variable=\x]  (0,0) -- plot (\x,{0.75*\x}) -- plot (2-\x,{1-0.5*\x}) -- cycle;  
\fill[rotate = 180, opacity = 1, thin, bottom color = gray!40, top color = white, domain = 0:2, variable=\x]  (0,0) -- plot (\x,{0.75*\x}) -- plot (2-\x,{1-0.5*\x}) -- cycle;  
\fill[opacity = 1, thin, bottom color = white, top color = gray!40, domain = 0:2, variable=\x]  (0,0) -- plot (\x,{0.25*\x}) -- plot (2-\x,{-0.5+0.25*\x}) -- cycle;  
\fill[rotate = 180, opacity = 1, thin, bottom color = white, top color = gray!40, domain = 0:2, variable=\x]  (0,0) -- plot (\x,{0.25*\x}) -- plot (2-\x,{-0.5+0.25*\x}) -- cycle;  
\fill[opacity = 1, thin, top color = white, bottom color = gray!40, domain = 0:2, variable=\x]  (0,0) -- plot (\x,{-0.75*\x}) -- plot (2-\x,{-1+0.5*\x}) -- cycle; 
\fill[rotate = 180, opacity = 1, thin, bottom color = white, top color = gray!40, domain = 0:2, variable=\x]  (0,0) -- plot (\x,{-0.75*\x}) -- plot (2-\x,{-1+0.5*\x}) -- cycle; 
\draw[->] (-2.25,0) -- (2.5,0); 
\draw[->] (0,-1.5) -- (0,1.5);
\draw[black, domain=-2:2, line width = 0.01mm] plot (\x,{0.75*\x});
\draw[black, domain=-2:2, line width = 0.01mm] plot (\x,{0.5*\x});
\draw[black, domain=-2:2, line width = 0.01mm] plot (\x,{0.25*\x});
\draw[black, domain=-2:2, line width = 0.01mm] plot (\x,{-0.25*\x});
\draw[black, domain=-2:2, line width = 0.01mm] plot (\x,{-0.5*\x});
\draw[black, domain=-2:2, line width = 0.01mm] plot (\x,{-0.75*\x});
\foreach \x in {-3.5,-2.5,-1.5,-0.5,0.5,1.5,2.5,3.5}
	\draw[dashed, line width = 0.01mm] (\x/2,0.5*\x/2)--(\x/2,0.75*\x/2); 
\foreach \x in {-3.5,-2.5,-1.5,-0.5,0.5,1.5,2.5,3.5}
	\draw[dashed, line width = 0.01mm] (\x/2,-0.25*\x/2)--(\x/2,0.25*\x/2);
\foreach \x in {-3.5,-2.5,-1.5,-0.5,0.5,1.5,2.5,3.5}
	\draw[dashed, line width = 0.01mm] (\x/2,-0.5*\x/2)--(\x/2,-0.75*\x/2); 
\draw (2.5,-0.3) node {\small $\Xpazo$};
\draw (0.35,1.5) node {\small $\Ypazo$};
\end{scope}
\end{tikzpicture}
}
\caption{{\small \textit{Illustration of the concepts of closed processes and fans. The set-valued mapping given by $\Fpazo(x) := [-x,x]$ (left) is clearly homogeneous and convex-valued as well as subadditive, and thus defines a fan. On the other hand, the set-valued map given by $\Fpazo(x) = [-x,-3x/4] \cup [-x/2,x/2] \cup [3x/4,x] $ (right) is a homogeneous closed process, but has non-convex images and is therefore not a fan.}}}
\label{fig:Fans}
\end{figure} \vspace{-0.5cm}


\subsection{Controlled dynamical systems}

In what follows, we recollect elementary facts pertaining to controlled Cauchy problems of the form
\begin{equation}
\label{eq:ContCauchy}
\left\{
\begin{aligned}
\dot x(t) & = f(x(t),u(t)), \\
x(\tau) & = x,
\end{aligned}
\right.
\end{equation}
defined over some finite time interval $[0,T]$, and taking a prescribed value $x \in \R^d$ at some $\tau \in [0,T]$. Therein, the set of admissible controls is given by
\begin{equation*}
\Upazo := \Big\{ u : [0,T] \to U ~\, \text{s.t.}~ u(\cdot) ~\text{is $\Lcal^1$-measurable} \Big\}    
\end{equation*}
where $(U,\dsf_U(\cdot,\cdot))$ is a compact metric space. Throughout the manuscript, we suppose that the following assumptions hold.   

\begin{taggedhyp}{\textbn{(H)}} \hfill
\label{hyp:H}
\begin{enumerate}
\item[$(i)$] The map  $f: \R^d\times U \to \R^d$ is continuous, and there exists a constant $m > 0$ such that 
\begin{equation*}
|f(x,u)| \leq m \big(1 + |x| \big)
\end{equation*}
for all $(x,u) \in\R^d \times U$. 
\item[$(ii)$] For each compact set $K \subset\R^d$, there exists a constant $\ell_K >0$ such that 
\begin{equation*}
|f(x,u) - f(y,u)| \leq \ell_K |x-y|
\end{equation*}
for all $x,y \in K$ and each $u \in U$.
\end{enumerate}
\end{taggedhyp}

\begin{remark}[Concerning Hypotheses \textnormal{\ref{hyp:H}}]
Since we restrict our attention to compact control sets, our assumptions encompass the linear controlled dynamics 
\begin{equation*}
f(x,u) := Ax + Bu
\end{equation*}
where $U \subset \R^n$ is compact and $A \in \R^{d \times d}$, $B \in \R^{d \times n}$ are matrices. We also stress that all our results could be extended to unbounded time intervals e.g. by assuming that the dynamics is stable, in the sense that solutions starting from a compact set of initial data remain inside a (possibly larger) compact set.
\end{remark}

Throughout our developments, we will often use the notation $f_u \in C^0(\R^d,\R^d)$ for the vector field $x \in \R^d \mapsto f(x,u) \in\R^d$ associated with some $u \in U$, and denote by 
\begin{equation*}
\Fpazo := \Big\{ f_u \in C^0(\R^d,\R^d) ~\, \textnormal{s.t.}~ u \in U \Big\}
\end{equation*}
the set of all admissible fields of the control system. In the following proposition, we show that the latter is compact under our working assumptions. 

\begin{proposition}[Compactness of controlled vector fields]
\label{prop:Compact}
Under Hypotheses \textnormal{\ref{hyp:H}}, the set $\Fpazo \subset C^0(\R^d,\R^d)$ is compact for the topology of local uniform convergence.
\end{proposition}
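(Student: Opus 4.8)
The plan is to realise $\Fpazo$ as the continuous image of the compact set $U$ under the evaluation embedding
\begin{equation*}
\iota : U \longrightarrow C^0(\R^d,\R^d), \qquad \iota(u) := f_u,
\end{equation*}
and then to invoke the elementary fact that the continuous image of a compact set is compact. Since $(C^0(\R^d,\R^d),\dsf_{cc}(\cdot,\cdot))$ is a (Fréchet, hence metrisable) space, it suffices to establish the sequential continuity of $\iota$, after which $\Fpazo = \iota(U)$ will be compact, and, the target being Hausdorff, also closed.

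First I would fix a radius $k \geq 1$ and observe that the product $B(0,k) \times U$ is compact in $\R^d \times U$, being the product of two compact sets. As $f : \R^d \times U \to \R^d$ is continuous by Hypothesis \textnormal{\ref{hyp:H}}-$(i)$, its restriction to the compact metric space $B(0,k) \times U$ is uniformly continuous. The key consequence I would extract is that for every $\epsilon > 0$ there exists $\delta > 0$ such that
\begin{equation*}
\dsf_U(u,u') \leq \delta \quad \Longrightarrow \quad \sup_{x \in B(0,k)} |f(x,u) - f(x,u')| \leq \epsilon,
\end{equation*}
where the bound is uniform in $x$ precisely because the modulus of uniform continuity does not depend on the base point.

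With this uniform estimate in hand, continuity of $\iota$ follows readily. Given a compact set $K \subset \R^d$, I would choose $k \geq 1$ with $K \subset B(0,k)$; then for any sequence $u_n \to u$ in $U$, the displayed implication yields $\NormC{f_{u_n} - f_u}{0}{K,\R^d} \to 0$. As this holds for every compact $K$, we obtain $f_{u_n} \to f_u$ locally uniformly, which is exactly convergence for the metric $\dsf_{cc}(\cdot,\cdot)$. Hence $\iota$ maps convergent sequences to convergent sequences, so it is continuous, and $\Fpazo = \iota(U)$ is compact.

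The argument is almost entirely routine; the only point that genuinely matters is the compactness of the control set $U$, which is what upgrades the joint continuity of $f$ into uniform continuity on each slab $B(0,k) \times U$ and thereby furnishes a modulus of continuity independent of $x$. I would note in passing that neither the growth bound nor the Lipschitz estimate of Hypothesis \textnormal{\ref{hyp:H}} is needed here: an alternative route would be to establish the relative compactness of $\{ f_u|_{B(0,k)} \textnormal{ s.t. } u \in U \}$ via the Arzel\`a--Ascoli theorem, using the uniform bound $|f(x,u)| \leq m(1+k)$ together with the equi-Lipschitz estimate $\ell_{B(0,k)}$, but this would still require an extra step to identify the limits as genuine elements of $\Fpazo$. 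The continuous-image approach is therefore both shorter and more transparent.
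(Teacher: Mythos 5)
Your proof is correct, and it takes a slightly different route from the paper's. The paper argues sequentially: it extracts a convergent subsequence $u_{n_k} \to u$ from the compactness of $U$, uses Hypothesis \textnormal{\ref{hyp:H}}-$(i)$ to get the pointwise convergence $f_{u_{n_k}}(x) \to f_u(x)$, and then invokes Hypothesis \textnormal{\ref{hyp:H}}-$(ii)$ to obtain uniform equicontinuity of the family $(f_{u_n})$ on each compact set, which upgrades pointwise to locally uniform convergence. Your argument packages the same underlying mechanism as the continuity of the evaluation map $\iota : u \mapsto f_u$ followed by the fact that continuous images of compact sets are compact, but it justifies the key continuity step differently: instead of equicontinuity via the local Lipschitz hypothesis, you use the uniform continuity of $f$ on the compact product $B(0,k) \times U$, which requires only the joint continuity from Hypothesis \textnormal{\ref{hyp:H}}-$(i)$. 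This is both tidier and marginally more general -- your observation that neither the sublinear growth bound nor the Lipschitz estimate is needed is accurate, whereas the paper's proof does lean on \textnormal{\ref{hyp:H}}-$(ii)$ for its equicontinuity step (the two hypotheses yield essentially the same modulus, but your derivation shows the Lipschitz structure is inessential). Your closing remark about the Arzel\`a--Ascoli alternative is also well taken: that route (which is closer in spirit to the paper's extraction argument) would deliver relative compactness of the restrictions but leave open the identification of limit points as elements of $\Fpazo$, a step the paper sidesteps by identifying the limit as $f_u$ directly from the convergent subsequence of controls, and which your continuous-image formulation renders unnecessary altogether.
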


\begin{proof}
Let $(f_n) \subset \Fpazo$ be a sequence of admissible vector fields. By construction, there exists a sequence of controls $(u_n) \subset U$ such that $f_n = f_{u_n}$ for each $n \geq 1$. Since $(U,\dsf_U(\cdot,\cdot))$ is compact, there exists an element $u \in U$ for which 
\begin{equation*}
\dsf_U(u_{n_k},u) ~\underset{n \to +\infty}{\longrightarrow}~ 0    
\end{equation*}
along a subsequence $(u_{n_k}) \subset U$. Thus, it follows from Hypothesis \textnormal{\ref{hyp:H}}-$(i)$ that 
\begin{equation*}
\big| f_u(x) - f_{u_{n_k}}(x) \big| ~\underset{n \to +\infty}{\longrightarrow}~ 0
\end{equation*} 
for all $x \in \R^d$. Besides, it stems from Hypothesis \textnormal{\ref{hyp:H}}-$(ii)$ that the maps $f_{u_n} : K \to \R^d$ are uniformly equicontinuous over each compact set $K \subset \R^d$, so that 
\begin{equation*}
\sup_{x \in K} \big| f_u(x) - f_{u_n}(x) \big| ~\underset{n \to +\infty}{\longrightarrow}~ 0.       
\end{equation*}
The latter identity being valid for all compact set, this concludes the proof. 
\end{proof}

In the following theorem, we recall  a standard Cauchy-Lipschitz well-posedness result for the controlled dynamics \eqref{eq:ContCauchy}, along with elementary estimates satisfied by the underlying flow maps. We point the reader to \cite[Chapter 2]{BressanPiccoli} for detailed proofs.

\begin{theorem}[Well-posedness, stability and representation of solutions]
\label{thm:Well-posed}
Let $(\tau,x) \in [0,T] \times \R^d $ be given and suppose that Hypotheses \textnormal{\ref{hyp:H}} hold. Then for each $u(\cdot) \in \Upazo$, the dynamics \eqref{eq:ContCauchy} admits a unique solution $x_u(\cdot) \in \Lip([0,T],\R^d)$ which can be represented explicitly as  
\begin{equation}
x_u(t) = \Phi_{(\tau,t)}^u(x)
\end{equation}
for all times $t \in[0,T]$. Therein, the family of maps $(\Phi_{(\tau,t)}^u)_{\tau,t \in [0,T]} \subset C^0(\R^d,\R^d)$ are the \textnormal{flows of homeomorphisms} defined as the unique solution of the Cauchy problem  
\begin{equation}
\label{eq:ContFlow}
\Phi_{(\tau,t)}^u(x) = x + \INTSeg{f \Big( \Phi_{(\tau,s)}^u(x) , u(s) \Big)}{s}{\tau}{t}.
\end{equation}
In addition, for each $R>0$, the latter comply with the estimates
\begin{equation}
\label{eq:StabEst}    
\big| \Phi_{(\tau_1,t_1)}^u(x) \big| \leq M_R \quad \text{and} \quad \big| \Phi_{(\tau_1,t_1)}^u(x) - \Phi_{(\tau_2,t_2)}^u(y) \big| \leq L_R \Big(|\tau_1 - \tau_2| + |t_1 - t_2| + |x - y| \Big),
\end{equation}
for all times $\tau_1,\tau_2,t_1,t_2 \in [0,T]$ and every $x,y \in B(0,R)$, where $M_R,L_R > 0$ are constants which only depend on the magnitudes of $m,T$ and $R$.  
\end{theorem}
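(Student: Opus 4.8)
The statement is essentially classical Carathéodory theory for ordinary differential equations, so the plan is to recast \eqref{eq:ContCauchy} as an ODE whose right-hand side is measurable in time and locally Lipschitz in the state, and then to extract each conclusion from Grönwall-type arguments. First I would fix $u(\cdot) \in \Upazo$ and set $g(t,x) := f(x,u(t))$. Since $u(\cdot)$ is $\Lcal^1$-measurable and $f$ is continuous by Hypothesis \ref{hyp:H}-$(i)$, the map $t \mapsto g(t,x)$ is measurable for each fixed $x$, while $x \mapsto g(t,x)$ is locally Lipschitz uniformly in $t$ by Hypothesis \ref{hyp:H}-$(ii)$, with $|g(t,x)| \leq m(1+|x|)$. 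These are exactly the Carathéodory hypotheses, so local existence of an absolutely continuous solution follows from a fixed-point argument applied to the integral formulation $x(t) = x + \int_\tau^t g(s,x(s)) \, \textnormal{d} s$, and uniqueness follows from the local Lipschitz bound via Grönwall's lemma.

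Second, I would establish the a priori bound. Combining the integral formulation with the sublinear growth gives $|x(t)| \leq |x| + \left| \int_\tau^t m(1+|x(s)|) \, \textnormal{d} s \right|$, whence Grönwall yields $|x(t)| \leq (1+|x|) e^{mT}$ uniformly on $[0,T]$. Because this bound is independent of the particular solution and cannot blow up in finite time, the local solution extends to all of $[0,T]$ and stays inside $B(0,M_R)$ with $M_R := (1+R) e^{mT}$ whenever $|x| \leq R$; this is precisely the first estimate in \eqref{eq:StabEst}.

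Third come the Lipschitz estimates. Having confined all relevant trajectories to the compact set $K := B(0,M_R)$, I would invoke the constant $\ell_K$ from Hypothesis \ref{hyp:H}-$(ii)$. For two initial data $x,y$ at fixed $(\tau,t)$, subtracting the integral equations and applying Grönwall bounds $|x_u(t)-y_u(t)| \leq e^{\ell_K T} |x-y|$; for two terminal times $t_1,t_2$ the growth bound gives the Lipschitz-in-$t$ estimate $|x_u(t_1)-x_u(t_2)| \leq m(1+M_R) |t_1-t_2|$ directly; and for two starting times $\tau_1,\tau_2$ one combines the discrepancy in initial value created over $[\tau_2,\tau_1]$ with the two previous estimates. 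Collecting these produces the joint Lipschitz estimate with a single constant $L_R$ depending only on $m$, $T$ and $R$.

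Finally, I would define $\Phi_{(\tau,t)}^u(x) := x_u(t)$, so that the integral identity \eqref{eq:ContFlow} is merely the integral formulation, and the flow-of-homeomorphisms property follows from the cocycle identity $\Phi_{(s,t)}^u \circ \Phi_{(\tau,s)}^u = \Phi_{(\tau,t)}^u$, itself a direct consequence of uniqueness; this shows in particular that $\Phi_{(t,\tau)}^u$ inverts $\Phi_{(\tau,t)}^u$, while bicontinuity is furnished by the Lipschitz estimate just established. I do not expect a deep obstacle here, since the result is standard (and the excerpt already points to \cite[Chapter 2]{BressanPiccoli}); the main technical care lies in carrying out the fixed-point and Grönwall arguments within the Carathéodory framework of absolutely continuous solutions and almost-everywhere differentiation, rather than the classical $C^1$ one, and in the bookkeeping needed to fold the three separate Lipschitz dependencies into the single constant $L_R$.
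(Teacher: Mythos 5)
Your proposal is correct and follows exactly the classical Carath\'eodory--Gr\"onwall route that the paper itself relies on: the manuscript gives no proof of Theorem \ref{thm:Well-posed} but defers to \cite[Chapter 2]{BressanPiccoli}, which is precisely the fixed-point existence, Gr\"onwall a priori bound, and Lipschitz-stability argument you reconstruct. Your handling of the three Lipschitz dependencies (splitting the $\tau$-dependence via the cocycle identity and the bound $M_R$, so that all trajectories stay in a single compact set where $\ell_K$ applies) is the right bookkeeping, so there is nothing to add.
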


We close this section by recollecting a folklore result stating that flow solutions of \eqref{eq:ContFlow} depend continuously on their control input. Therein and in what follows, we will use the observation that $\Upazo$ is a Polish space when endowed with the distance
\begin{equation*}
\dsf_{\Upazo}(u(\cdot),v(\cdot)) := \INTSeg{\dsf_U(u(t),v(t))}{t}{0}{T}
\end{equation*}
defined for all $u(\cdot),v(\cdot) \in \Upazo$, see for instance \cite[Chapter 8]{Aubin1990}.   

\begin{corollary}[Continuity in the control]
\label{cor:ContinuousFlow}
Under Hypotheses \textnormal{\ref{hyp:H}}, the map
\begin{equation*}
u(\cdot) \in \Upazo \mapsto \Phi_{(\cdot,\cdot)}^u(\cdot) \in C^0([0,T] \times [0,T] \times \R^d,\R^d)
\end{equation*}
which to a control signal associates the corresponding flow is continuous.
\end{corollary}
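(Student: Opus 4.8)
The plan is to prove sequential continuity---which is equivalent to continuity since $\Upazo$ is a metric space---while equipping the target $C^0([0,T] \times [0,T] \times \R^d, \R^d)$ with its natural topology of local uniform convergence. Concretely, I would fix a radius $R > 0$ together with a sequence $u_n(\cdot) \to u(\cdot)$ for $\dsf_{\Upazo}$, and show that $\Phi_{(\tau,t)}^{u_n}(x) \to \Phi_{(\tau,t)}^u(x)$ uniformly over $(\tau,t,x) \in [0,T] \times [0,T] \times B(0,R)$.

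First I would localise. By the stability estimate \eqref{eq:StabEst} in Theorem \ref{thm:Well-posed}, there is a radius $M_R > 0$, \emph{independent of the control signal}, such that every flow under consideration takes its values in the compact set $K := B(0,M_R)$. On $K$, Hypothesis \ref{hyp:H}-$(ii)$ provides a Lipschitz constant $\ell_K$ for $f(\cdot,u)$ that is uniform in $u \in U$, while Hypothesis \ref{hyp:H}-$(i)$ bounds $|f|$ by $C := m(1+M_R)$ on $K \times U$. Next I would subtract the two integral representations \eqref{eq:ContFlow}, insert the intermediate quantity $f(\Phi_{(\tau,s)}^u(x),u_n(s))$, and split the integrand into a state-contraction part, bounded by $\ell_K | \Phi_{(\tau,s)}^{u_n}(x) - \Phi_{(\tau,s)}^u(x)|$ via the uniform Lipschitz bound, and an error part governed by $h_n(s) := \sup_{y \in K} | f(y,u_n(s)) - f(y,u(s)) |$. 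Writing $\delta_n := \INTSeg{h_n(s)}{s}{0}{T}$, which dominates the error part uniformly in $(\tau,t,x)$, an application of Grönwall's lemma then yields $| \Phi_{(\tau,t)}^{u_n}(x) - \Phi_{(\tau,t)}^u(x) | \leq \delta_n \, e^{\ell_K T}$ for all $(\tau,t,x) \in [0,T] \times [0,T] \times B(0,R)$.

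It then remains to show that $\delta_n \to 0$, which is the main obstacle: the field $f$ is only continuous---not Lipschitz---in the control variable, and the convergence $u_n(\cdot) \to u(\cdot)$ holds merely in $L^1$ rather than pointwise, so one cannot directly estimate $h_n(s)$ at a fixed $s$. I would resolve this by exploiting the uniform continuity of $f$ on the compact set $K \times U$: for every $\epsilon > 0$ there is $\eta > 0$ with $\sup_{y \in K} | f(y,v) - f(y,w) | < \epsilon$ whenever $\dsf_U(v,w) < \eta$. Consequently $h_n(s) \leq \epsilon$ on the set where $\dsf_U(u_n(s),u(s)) < \eta$ and $h_n(s) \leq 2C$ everywhere, so that a Chebyshev-type estimate gives
\[
\delta_n \leq \epsilon \, T + 2C \, \Lcal^1 \Big( \big\{ s \in [0,T] ~\text{s.t.}~ \dsf_U(u_n(s),u(s)) \geq \eta \big\} \Big) \leq \epsilon \, T + \frac{2C}{\eta} \, \dsf_{\Upazo}(u_n(\cdot),u(\cdot)).
\]
Letting $n \to +\infty$ and then $\epsilon \to 0$ forces $\delta_n \to 0$, which concludes the argument. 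The crux is thus the passage from $L^1$-closeness of the controls to smallness of the forcing term $\delta_n$, achieved by combining uniform continuity on the compact product $K \times U$ with the Chebyshev bound; the remaining ingredients (localisation and Grönwall) are routine and, crucially, produce constants that are uniform in $(\tau,t,x)$, which is exactly what is needed for local uniform convergence.
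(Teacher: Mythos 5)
Your proposal is correct and follows essentially the same route as the paper's proof: localisation via the stability estimate \eqref{eq:StabEst}, insertion of the intermediate term $f(\Phi_{(\tau,s)}^u(x),u_n(s))$ followed by Gr\"onwall, and a good-set/bad-set splitting that combines uniform continuity of $f$ on a compact product with the sublinear bound on the exceptional set. The only (cosmetic) difference is that the paper invokes the fact that $L^1$-convergence of the controls implies convergence in measure by citing a reference, whereas you derive the same smallness of the bad set quantitatively via the Chebyshev--Markov inequality, making the argument self-contained.
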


\begin{proof}
Being standard, the proof of this result is deferred to Appendix \ref{section:AppendixContinuity}.
\end{proof}


\section{Set-valued Koopman operators}
\label{section:Koopman}
\setcounter{equation}{0} \renewcommand{\theequation}{\thesection.\arabic{equation}}

In this section, we introduce the \textit{set-valued Koopman operators} associated with a control system, and discuss some of their fundamental properties. In what follows, we assume that Hypotheses \textnormal{\ref{hyp:H}} hold and carry out our analysis over the separable normed space $\Xpazo := C^0_c(\R^d,\C)$ of continuous and complex-valued observables with compact support.

\begin{definition}[Set-valued Koopman operators]
\label{def:Koopman}
We define the \textnormal{set-valued Koopman operators} associated with the control system \eqref{eq:ContCauchy} by
\begin{equation}
\label{eq:DefinitionKoopman}
\Kpazo_{(\tau,t)} : \varphi \in \Xpazo \tto \Big\{ \varphi \circ \Phi_{(\tau,t)}^u ~\,\textnormal{s.t. $u(\cdot) \in \Upazo$}  \Big\} \subset \Xpazo
\end{equation}
for all times $\tau,t \in[0,T]$.
\end{definition}

As highlighted in the introduction, the action of the set-valued Koopman operators can be thought of as outputting the reachable set of observables starting from $\varphi \in \Xpazo$ generated by all the admissible control signals. Before investigating these objects further, a few remarks are in order. First, notice that when the dynamics is uncontrolled, i.e. when there is a unique flow $(\Phi_t)_{t \in [0,T]} \subset C^0(\R^d,\R^d)$, then
\begin{equation*}
\Kpazo_{(\tau,t)}(\varphi) = \{\varphi \circ \Phi_{t-\tau}\} 
\end{equation*}
and the object introduced in Definition \ref{def:Koopman} coincides with the classical Koopman semigroup. Besides, it follows from  \eqref{eq:DefinitionKoopman} together with the fact that flows of nonautonomous dynamical systems form a semigroup for the composition operation that the latter satisfies the identity
\begin{equation*}
\Kpazo_{(\tau,t)}(\varphi) = \Kpazo_{(s,t)} \circ \Kpazo_{(\tau,s)}(\varphi)    
\end{equation*}
for all times $\tau,s,t \in [0,T]$, where we used the natural notation overloading
\begin{equation*}
\Kpazo_{(\tau,t)}(\BPhi) := \bigcup_{\varphi \in\BPhi} \Kpazo_{(\tau,t)}(\varphi)
\end{equation*}
for some set $\BPhi \subset\Xpazo$. Another direct observation that one can make is that the operator $\Kpazo_{(\tau,t)} : \Xpazo \tto \Xpazo$ is homogeneous, namely 
\begin{equation*}
\Kpazo_{(\tau,t)}(\alpha \varphi) = \alpha \Kpazo_{(\tau,t)}(\varphi)
\end{equation*}
for each $\alpha \in \R$. It is also subadditive, since one clearly has that
\begin{equation*}
\begin{aligned}
\Kpazo_{(\tau,t)} \big( \varphi_1 + \varphi_2 \big) & = \Big\{ \big( \varphi_1 + \varphi_2 \big) \circ \Phi_{(\tau,t)}^u ~\, \textnormal{s.t.}~ u(\cdot) \in \Upazo \Big\}  \\
& \subset \Big\{ \varphi_1 \circ \Phi_{(\tau,t)}^{u_1} + \varphi_2 \circ \Phi_{(\tau,t)}^{u_2} ~\, \textnormal{s.t.}~ u_1(\cdot),u_2(\cdot) \in \Upazo \Big\} \\
& = \Kpazo_{(\tau,t)}(\varphi_1) + \Kpazo_{(\tau,t)}(\varphi_2)
\end{aligned}
\end{equation*}
for each $\varphi_1,\varphi_2 \in \Xpazo$. However, the sets $\Kpazo_{(\tau,t)}(\varphi) \subset \Xpazo$ need not be convex in general, and the Koopman operators are thus not fans in sense of Definition \ref{def:ClosedProcess}. We shall see nonetheless that they still enjoy many nice properties, including an explicit characterisation of their adjoints along with a meaningful variant of the spectral mapping theorem, relating the eigenvalues of the semigroup and its infinitesimal generator.

\begin{remark}[Set-valued Koopman operators for differential inclusions]
We would like to stress that, while in the present work we define set-valued Koopman operators for controlled systems, most of our results remain valid for dynamical systems modelled more generally by differential inclusions, that is 
\begin{equation}
\label{eq:DiffInc}
\dot{x}(t) \in F(x(t))
\end{equation}
with $F : \R^d \tto \R^d$ being a Lipschitz continuous set-valued mapping with nonempty compact images. In that case, one can define the set-valued Koopman operators as 
\begin{equation*}
\big(\Kpazo_{(\tau,t)}(\varphi)\big)(x) := \bigg\{ \varphi(x(t)) ~\, \textnormal{s.t.}~ \text{$x(\cdot)$ solves \eqref{eq:DiffInc} with $x(\tau) = x \in \R^d$} \bigg\} 
\end{equation*} 
for all times $t \in [0,T]$. In the situation where $F : \R^d \tto \R^d$ has convex images -- an assumption that is quite standard in control theory, see e.g. \cite{Aubin1990,Clarke,Vinter}, and covered by Hypothesis \textnormal{\ref{hyp:C}} below --, the Lipschitz parametrisation theorem of \cite[Theorem 9.7.1]{Aubin1990} yields the existence of a compact set $U \subset \R^d$ along with a locally Lipschitz map $f:\R^d \times U \to \R^d$ such that $F(x) = \{ f(x,u) ~\, \textnormal{s.t.}~ u \in U \}$ for all $x \in \R^d$. 
\end{remark}

\begin{remark}[Concerning time-varying systems]
We also mention that the majority of our developments would still hold (up to some technical adaptations) for time-varying vector fields $(t,x,u) \in [0,T] \times \R^d \times U \mapsto f(t,x,u) \in \R^d$ which are $\Lcal^1$-measurable in $t \in [0,T]$. That being said, we chose to stick to the simpler framework of time-invariant control systems for the sake of readability, and point the interested reader to \cite{Frankowska1989} for some insight on how our results could be rephrased in this context. 
\end{remark}

In the following proposition, we establish basic regularity results for the set-valued Koopman operators, akin to those already known for their classical counterparts. 

\begin{proposition}[Regularity properties of set-valued Koopman operators]
Under Hypotheses \textnormal{\ref{hyp:H}}, the set-valued Koopman operators $\varphi \in \Xpazo \tto \Kpazo_{(\tau,t)}(\varphi)$ are $1$-Lipschitz for all times $\tau,t \in[0,T]$, and the set-valued map $(\tau,t) \in [0,T] \times [0,T] \tto \Kpazo_{(\tau,t)}(\varphi)$ is continuous for each $\varphi \in \Xpazo$. 
\end{proposition}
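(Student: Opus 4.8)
The plan is to handle both assertions by exploiting a single elementary fact: for a fixed control, right-composition by the flow $\Phi_{(\tau,t)}^u$ acts as a sup-norm isometry on $\Xpazo$. Indeed, since Theorem \ref{thm:Well-posed} guarantees that $\Phi_{(\tau,t)}^u$ is a homeomorphism of $\R^d$ onto itself, for any $\varphi,\varphi' \in \Xpazo$ one has
\[
\NormC{\varphi' \circ \Phi_{(\tau,t)}^u - \varphi \circ \Phi_{(\tau,t)}^u}{0}{\R^d,\C} = \sup_{y \in \R^d} |\varphi'(y) - \varphi(y)| = \NormC{\varphi' - \varphi}{0}{\R^d,\C},
\]
because $y = \Phi_{(\tau,t)}^u(x)$ ranges over all of $\R^d$ as $x$ does. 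For the $1$-Lipschitz property I would then take an arbitrary $\psi' \in \Kpazo_{(\tau,t)}(\varphi')$, write it as $\psi' = \varphi' \circ \Phi_{(\tau,t)}^u$ for some $u(\cdot) \in \Upazo$, and pair it with the element $\psi := \varphi \circ \Phi_{(\tau,t)}^u \in \Kpazo_{(\tau,t)}(\varphi)$ built from the \emph{same} control. The displayed identity gives $\NormC{\psi' - \psi}{0}{\R^d,\C} = \NormC{\varphi' - \varphi}{0}{\R^d,\C}$, whence $\psi' \in \B_{\Xpazo}(\Kpazo_{(\tau,t)}(\varphi), \NormC{\varphi' - \varphi}{0}{\R^d,\C})$, which is exactly the inclusion defining $1$-Lipschitz continuity.

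For the continuity in $(\tau,t)$ I would first localise the problem. Fixing $\varphi$ with $\supp(\varphi) \subset B(0,R_0)$, the key preliminary step is to produce a radius $R_1 > 0$, independent of $u$ and of the times, such that $\supp(\varphi \circ \Phi_{(\tau,t)}^u) \subset B(0,R_1)$ for every $u(\cdot) \in \Upazo$ and all $(\tau,t) \in [0,T]^2$. This follows by noting that $\Phi_{(\tau,t)}^u(x) \in \supp(\varphi)$ forces $x = \Phi_{(t,\tau)}^u(\Phi_{(\tau,t)}^u(x))$ with $\Phi_{(\tau,t)}^u(x) \in B(0,R_0)$, since $\Phi_{(t,\tau)}^u = (\Phi_{(\tau,t)}^u)^{-1}$ by the semigroup property; the first estimate in \eqref{eq:StabEst} then yields $|x| \le M_{R_0} =: R_1$, a bound depending only on $m,T,R_0$.

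With this uniform support bound in hand, I would invoke the same-control comparison once more. For any $u(\cdot) \in \Upazo$ and any two pairs $(\tau,t),(\tau',t')$, both $\varphi \circ \Phi_{(\tau,t)}^u$ and $\varphi \circ \Phi_{(\tau',t')}^u$ vanish outside $B(0,R_1)$, so using the modulus of continuity $\omega_\varphi$ of the uniformly continuous map $\varphi$ together with the second estimate in \eqref{eq:StabEst} applied with coinciding spatial arguments, one obtains
\[
\NormC{\varphi \circ \Phi_{(\tau',t')}^u - \varphi \circ \Phi_{(\tau,t)}^u}{0}{\R^d,\C} \le \omega_\varphi \Big( L_{R_1}(|\tau'-\tau| + |t'-t|) \Big),
\]
and crucially this bound is \emph{uniform} in $u(\cdot) \in \Upazo$ because $R_1$ and $L_{R_1}$ do not depend on the control.

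From this single symmetric estimate both semicontinuity conditions of Definition \ref{def:Continuity} follow at once. For lower-semicontinuity, any $\psi = \varphi \circ \Phi_{(\tau,t)}^u \in \Kpazo_{(\tau,t)}(\varphi)$ is approximated within $\epsilon$ by $\varphi \circ \Phi_{(\tau',t')}^u \in \Kpazo_{(\tau',t')}(\varphi)$ as soon as $|\tau'-\tau| + |t'-t|$ is small enough that the right-hand side drops below $\epsilon$; for upper-semicontinuity the very same computation shows that every element of $\Kpazo_{(\tau',t')}(\varphi)$ lies within $\epsilon$ of its same-control counterpart in $\Kpazo_{(\tau,t)}(\varphi)$. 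The main obstacle is really the uniform localisation step: without the control- and time-independent support bound $R_1$, the supremum defining the sup-norm would run over an unbounded set and the estimate might fail to be uniform in $u$. Once the relevant suprema are confined to a fixed compact ball, the remainder is a routine application of the uniform flow estimate \eqref{eq:StabEst}, and Corollary \ref{cor:ContinuousFlow} is not even needed here, since comparing flows along a common control already suffices.
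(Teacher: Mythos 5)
Your proposal is correct and follows essentially the same route as the paper: same-control comparison for the $1$-Lipschitz bound, and for continuity in $(\tau,t)$ a uniform compact localisation of the supports (your ball $B(0,R_1)$ plays the role of the paper's set $K_{\varphi}$) combined with the flow estimate \eqref{eq:StabEst} and the uniform continuity of $\varphi$. Your one refinement is making the estimate explicitly uniform in $u(\cdot) \in \Upazo$, which delivers lower- and upper-semicontinuity simultaneously, whereas the paper proves lower-semicontinuity and dispatches the upper half with ``similar arguments''.
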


\begin{proof}
We start by studying the regularity of $\Kpazo_{(\tau,t)} : \Xpazo \tto \Xpazo$ for some given $\tau,t \in[0,T]$. We fix an arbitrary pair $\varphi_1,\varphi_2 \in \Xpazo$, and note that for every element $\psi_{(\tau,t)}^1 \in \Kpazo_{(\tau,t)}(\varphi_1)$, there exists some $u_1(\cdot)\in \Upazo$ such that $\psi_{(\tau,t)}^1 = \varphi_1 \circ \Phi_{(\tau,t)}^{u_1}$. Then, recalling that $(\Xpazo,\Norm{\cdot}_{\Xpazo})$ is a vector space, one has that
\begin{equation*}
\begin{aligned}
\psi_{(\tau,t)}^1 & = \varphi_1 \circ \Phi_{(\tau,t)}^{u_1} \\
& = \varphi_2 \circ \Phi_{(\tau,t)}^{u_1} + \big( \varphi_1 - \varphi_2 \big) \circ \Phi_{(\tau,t)}^{u_1} \\
& \in \Kpazo_{(\tau,t)}(\varphi_2) + \B_{\Xpazo} \big(0 ,  \Norm{\varphi_1 - \varphi_2}_{\Xpazo} \hspace{-0.1cm} \big),
\end{aligned}
\end{equation*}
which implies that $\Kpazo_{(\tau,t)}(\varphi_1) \subset \Kpazo_{(\tau,t)}(\varphi_2) + \B_{\Xpazo} \big( 0, \Norm{\varphi_1 - \varphi_2}_{\Xpazo})$ and is therefore tantamount to the $1$-Lipschitzianity of $\Kpazo_{(\tau,t)} : \Xpazo \tto \Xpazo$ for all times $\tau,t \in [0,T]$. Consider now an element $\varphi \in \Xpazo$, and observe that owing to the uniform bound displayed in \eqref{eq:StabEst} of Theorem \ref{thm:Well-posed}, the set  
\begin{equation*}
K_{\varphi} := \overline{\bigcup_{u(\cdot) \in \Upazo} \bigcup_{\tau,t \in[0,T]} \Phi_{(t,\tau)}^{u}(\supp(\varphi))} \subset \R^d
\end{equation*}
is well-defined and compact. Take any $\tau_1,t_1 \in[0,T]$, fix some $\psi_{(\tau_1,t_1)} \in \Kpazo_{(\tau_1,t_1)}(\varphi)$, and note that there exists a control $u_1(\cdot) \in \Upazo$ such that $\psi_{(\tau_1,t_1)} = \varphi \circ \Phi_{(\tau_1,t_1)}^{u_1}$. It then stems from the flow regularity estimate \eqref{eq:StabEst} that for any $\epsilon >0$, there exists some $\delta >0$ such that 
\begin{equation*}
\sup_{x \in K_{\varphi}} |\Phi_{(\tau_1,t_1)}^{u_1}(x) - \Phi^{u_1}_{(\tau,t)}(x)| \leq \epsilon
\end{equation*}
whenever $\tau,t \in [0,T]$ satisfy $(|\tau_1-\tau| + |t_1-t|) \leq \delta$. Observing in turn that $\varphi \in\Xpazo$ is uniformly continuous on its support while letting $\psi_{(\tau,t)} := \varphi \circ \Phi_{(\tau,t)}^{u_1} \in \Kpazo_{(\tau,t)}(\varphi)$, it then follows up to potentially choosing a smaller $\delta >0$ that
\begin{equation*}
\Norm{\psi_{(\tau_1,t_1)} - \psi_{(\tau,t)}}_{\Xpazo} \; = \sup_{x \in K_{\varphi}} \big| \varphi \circ \Phi_{(\tau_1,t_1)}^{u_1}(x) - \varphi \circ \Phi_{(\tau,t)}^{u_1}(x) \big| ~ \leq \, \epsilon.
\end{equation*}
Thence, the mapping $(\tau,t) \in [0,T] \times [0,T] \tto \Kpazo_{(\tau,t)}(\varphi) \subset \Xpazo$ is lower-semicontinuous at $(\tau_1,t_1) \in [0,T] \times [0,T]$ for every such pair, and thus over the whole time interval. It can be shown via similar arguments that it is also upper-semicontinuous.
\end{proof}

In what follows, we build on the previous result to prove a simple, yet enlightening representation formula for time-dependent Koopman observables.

\begin{proposition}[Representation formula for curves of Koopman observables] 
\label{prop:KoopmanRepresentation}
Given some $\varphi \in \Xpazo$, a function $(\tau,t) \in [0,T] \times [0,T] \to \psi_{(\tau,t)} \in \Xpazo$ is a measurable selection in the set-valued map $(\tau,t) \in [0,T] \times [0,T] \tto \Kpazo_{(\tau,t)}(\varphi)$ \textnormal{if and only if} there exists an $\Lcal^2$-measurable map $(\tau,t) \in [0,T] \times [0,T] \mapsto u_{\tau,t}(\cdot) \in \Upazo$ such that
\begin{equation*}
\psi_{(\tau,t)} = \varphi \circ \Phi_{(\tau,t)}^{u_{\tau,t}} 
\end{equation*}
for $\Lcal^2$-almost every $(\tau,t) \in [0,T] \times [0,T]$. 
\end{proposition}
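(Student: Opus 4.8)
The plan is to recognise both implications as two facets of a single measurable selection problem, with the control-to-observable assignment playing the role of the parametrised map in Filippov's principle (Theorem \ref{thm:FilippovSel}). Concretely, I would introduce the map
\begin{equation*}
\Psi : \big( (\tau,t) , u(\cdot) \big) \in \big( [0,T] \times [0,T] \big) \times \Upazo \longmapsto \varphi \circ \Phi_{(\tau,t)}^u \in \Xpazo,
\end{equation*}
so that, by the very Definition \ref{def:Koopman}, one has $\Psi\big((\tau,t),\Upazo\big) = \Kpazo_{(\tau,t)}(\varphi)$ for every pair $(\tau,t)$. The first and most substantial task is to establish the Carathéodory regularity of $\Psi$, namely that it is continuous in $u(\cdot) \in \Upazo$ for each fixed $(\tau,t)$, and measurable -- in fact continuous -- in $(\tau,t)$ for each fixed $u(\cdot)$. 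The continuity in the control is where the work concentrates: by Corollary \ref{cor:ContinuousFlow} the flows $\Phi_{(\tau,t)}^{u_n}$ converge to $\Phi_{(\tau,t)}^u$ locally uniformly whenever $u_n(\cdot) \to u(\cdot)$ in $\Upazo$, and since $\varphi$ is uniformly continuous with compact support this upgrades to convergence of $\varphi \circ \Phi_{(\tau,t)}^{u_n}$ towards $\varphi \circ \Phi_{(\tau,t)}^u$ in the supremum norm of $\Xpazo$. The continuity in $(\tau,t)$ is in turn a direct consequence of the stability estimate \eqref{eq:StabEst}, exactly as in the proof of the preceding proposition. A useful bookkeeping observation is that $\supp \big( \varphi \circ \Phi_{(\tau,t)}^u \big) = \Phi_{(t,\tau)}^u(\supp \varphi) \subset K_{\varphi}$ for a single compact set $K_{\varphi}$ independent of $(\tau,t)$ and $u(\cdot)$, so that $\Psi$ may be regarded as valued in the separable Banach -- hence Polish -- subspace of observables supported in $K_{\varphi}$, which sidesteps the non-completeness of $\Xpazo$ itself.

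For the \emph{if} direction, I would suppose that $(\tau,t) \mapsto u_{\tau,t}(\cdot)$ is $\Lcal^2$-measurable and set $\psi_{(\tau,t)} := \varphi \circ \Phi_{(\tau,t)}^{u_{\tau,t}}$. That $\psi_{(\tau,t)} \in \Kpazo_{(\tau,t)}(\varphi)$ holds by construction, so it only remains to verify measurability. Since $\Psi$ is Carathéodory and $(\tau,t) \mapsto \big( (\tau,t), u_{\tau,t}(\cdot) \big)$ is measurable into $\big( [0,T]^2 \big) \times \Upazo$, the superposition $(\tau,t) \mapsto \Psi \big( (\tau,t), u_{\tau,t}(\cdot) \big) = \psi_{(\tau,t)}$ is $\Lcal^2$-measurable; this standard fact is obtained by approximating $u_{\tau,t}(\cdot)$ by countably-valued measurable maps and passing to the limit using continuity in the control. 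Hence $\psi_{(\tau,t)}$ is a measurable selection of $(\tau,t) \tto \Kpazo_{(\tau,t)}(\varphi)$.

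For the \emph{only if} direction, let $\psi_{(\tau,t)}$ be a measurable selection, i.e. an $\Lcal^2$-measurable map with $\psi_{(\tau,t)} \in \Kpazo_{(\tau,t)}(\varphi) = \Psi\big((\tau,t),\Upazo\big)$ for almost every $(\tau,t)$. I would then apply Filippov's selection principle on $\Omega := [0,T] \times [0,T]$ equipped with $\Lcal^2$, taking the constant set-valued map $\Fpazo \equiv \Upazo$ -- which is trivially measurable and has nonempty closed images in the Polish space $\Upazo$ -- together with the parametrised map $\Psi$, whose Carathéodory regularity was verified above. Theorem \ref{thm:FilippovSel} then furnishes a measurable selection $(\tau,t) \mapsto u_{\tau,t}(\cdot) \in \Upazo$ such that $\psi_{(\tau,t)} = \Psi \big( (\tau,t), u_{\tau,t}(\cdot) \big) = \varphi \circ \Phi_{(\tau,t)}^{u_{\tau,t}}$ for $\Lcal^2$-almost every $(\tau,t)$, which is precisely the desired representation.

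The main obstacle is the verification of the Carathéodory structure of $\Psi$, and specifically the continuity in the control with values in the observable space $\Xpazo$: one must combine the local uniform continuity of the flow in the control from Corollary \ref{cor:ContinuousFlow} with the uniform continuity and compact support of $\varphi$, while keeping careful track of the uniform support containment in $K_{\varphi}$ so as to guarantee convergence in the supremum norm rather than merely locally uniformly. Once this regularity is in place, both implications reduce to, respectively, the superposition measurability of Carathéodory maps and a direct invocation of the Filippov selection theorem.
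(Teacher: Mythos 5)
Your proposal is correct and follows essentially the same route as the paper's proof: both directions hinge on the parametrised map $\Psi_{\varphi} : (\tau,t,u(\cdot)) \mapsto \varphi \circ \Phi_{(\tau,t)}^u$, whose regularity is obtained from Theorem \ref{thm:Well-posed} and Corollary \ref{cor:ContinuousFlow}, with the \emph{if} direction reduced to measurability of the superposition and the \emph{only if} direction handled by Filippov's selection principle (Theorem \ref{thm:FilippovSel}) over $\Omega = [0,T] \times [0,T]$ with the constant set-valued map $\Upazo$. The only (harmless) divergence is how you address the non-completeness of $\Xpazo$: you restrict to observables supported in the fixed compact set $K_{\varphi}$, whereas the paper simply invokes the norm completion of $\Xpazo$ being a separable Banach space.
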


\begin{proof}[Proof of Proposition \ref{prop:KoopmanRepresentation}]
Clearly, if $\psi_{(\tau,t)} = \varphi \circ \Phi_{(\tau,t)}^{u_{\tau,t}}$ for some measurable map $(\tau,t) \in [0,T] \times [0,T] \mapsto u_{\tau,t}(\cdot) \in \Upazo$, then $\psi_{(\tau,t)} \in \Kpazo_{(\tau,t)}(\varphi)$ for $\Lcal^2$-almost every $(\tau,t) \in [0,T] \times [0,T]$. Moreover, the map $(\tau,t) \in [0,T] \times [0,T] \mapsto \psi_{(\tau,t)} \in \Xpazo$ is then measurable by the continuity of 
\begin{equation*}
(\tau,t,u(\cdot)) \in [0,T] \times [0,T] \times \Upazo \mapsto \Phi_{(\tau,t)}^u \in C^0(\R^d,\R^d)   
\end{equation*}
that stems from Theorem \ref{thm:Well-posed} and Corollary \ref{cor:ContinuousFlow}. Suppose now that $(\tau,t) \in [0,T] \times [0,T] \mapsto \psi_{(\tau,t)} \in \Kpazo_{(\tau,t)}(\varphi)$ is an arbitrary measurable selection in the set-valued Koopman operator, and notice that the latter can be expressed as 
\begin{equation*}
\Kpazo_{(\tau,t)}(\varphi) = \Big\{ \Psi_{\varphi}(\tau,t,u(\cdot)) ~\,\textnormal{s.t.}~ u(\cdot) \in \Upazo \Big\}
\end{equation*}
where $\Psi_{\varphi} : (\tau,t,u(\cdot)) \in [0,T] \times [0,T] \times \Upazo \mapsto \varphi \circ \Phi_{(\tau,t)}^u \in \Xpazo$ is continuous, again as a consequence of Theorem \ref{thm:Well-posed} and Corollary \ref{cor:ContinuousFlow}. Because the completion of $(\Xpazo,\Norm{\cdot}_{\Xpazo})$ is a separable Banach space, we may apply the selection principle of Theorem \ref{thm:FilippovSel} to infer the existence of a measurable map $(\tau,t) \in [0,T] \times [0,T] \mapsto u_{\tau,t}(\cdot) \in \Upazo$ such that 
\begin{equation*}
\psi_{(\tau,t)} = \Psi_{\varphi}(\tau,t,u_{\tau,t}(\cdot)) = \varphi \circ \Phi_{(\tau,t)}^{u_{\tau,t}}
\end{equation*}
for $\Lcal^2$-almost every $\tau,t \in [0,T]$, which closes the proof.
\end{proof}

\begin{remark}[On the representation of Koopman observables]
In essence, the previous result shows that while every element in $\Kpazo_{(\tau,t)}(\varphi)$ can be expressed using a single controlled flow as long as both $\tau,t \in [0,T]$ are fixed, a time-dependent family $(\tau,t) \in [0,T] \times [0,T] \mapsto \psi_{(\tau,t)} \in \Kpazo_{(\tau,t)}(\varphi)$ may a priori jump between the realisations of different controlled dynamics, as illustrated in Figure \ref{fig:MeasurableSel} below.
\end{remark}

\begin{figure}
\centering
\resizebox{0.9\textwidth}{!}{
\begin{tikzpicture}
\draw[->] (-0.25,0)--(2.75,0);
\draw[->] (0,-0.25)--(0,1.75);
\draw[->] (0.15,0.15)--(-1,-1);
\draw (-0.5,-0.85) node {\scriptsize $\R^d$};
\draw (-0.25,1.5) node {\scriptsize $\C$};
\filldraw[opacity = 0.8, draw = black, fill = white, thin, bottom color = white, top color = gray!40] plot [smooth, tension=0.6] coordinates {(2.5,-0.4)(2.25,-0.35)(1.45,-0.5)(0.75,-0.35)(0.25,-0.435)(0.75,-1)(1.45,-1.25)(2.25,-1.1)};
\draw[black, dashed, line width =0.05] plot [smooth, tension=0.8] coordinates {(0.25,-0.435)(0.75,-0.45)(1.45,-0.65)(2.25,-0.5)(2.45,-0.55)};
\draw[black, dashed, line width =0.05] plot [smooth, tension=0.8] coordinates {(0.25,-0.435)(0.75,-0.6)(1.45,-0.85)(2.25,-0.7)(2.35,-0.7)};
\draw[black, line width =0.5] plot [smooth, tension=0.8] coordinates {(0.25,-0.435)(0.75,-0.8)(1.45,-1.05)(2.3,-0.9)};
\filldraw[opacity = 0.5, draw = gray, fill = white, thin, bottom color = white, top color = gray!40] plot [smooth, tension=0.8] coordinates {(2.5,0.7)(2.25,0.8)(1.455,0.7)(0.75,0.85)(0.25,1)(0.98,0.35)(1.8,0.15)(2.25,0.2)};
\draw[black, line width =0.5] plot [smooth, tension=0.8] coordinates {(0.275,1)(0.75,0.6)(1.45,0.35)(2.35,0.35)};
\draw[black, dashed] (0.25,-0.45)--(0.25,1);
\draw[black, dashed] (1,-0.95)--(1,0.455);
\draw[black, dashed] (2,-1)--(2,0.315);
\draw[black] (0.25,1) node {\LARGE $\cdot$};
\draw[black] (1,0.455) node {\LARGE $\cdot$};
\draw[black] (2,0.315) node {\LARGE $\cdot$};
\draw[black] (0.6,1.25) node {\scriptsize $\varphi(x)$};
\draw[black] (1.5,0.95) node {\scriptsize $\psi_{(0,t_1)}(x)$};
\draw[black] (2.5,0.545) node {\scriptsize $\psi_{(0,t_2)}(x)$};
\draw (0.25,-0.455) node {\LARGE $\cdot$};
\draw (1,-0.95) node {\LARGE $\cdot$};
\draw (2,-1) node {\LARGE $\cdot$};
\draw (0.1,-0.6) node {\scriptsize $x$};
\draw (0.75,-1.5) node {\scriptsize $\Phi_{(0,t_1)}^u(x)$};
\draw (2.5,-1.5) node {\scriptsize $\Phi_{(0,t_2)}^u(x)$};
\begin{scope}[xshift=6cm]
\draw[->] (-0.25,0)--(2.75,0);
\draw[->] (0,-0.25)--(0,1.75);
\draw[->] (0.15,0.15)--(-1,-1);
\draw (-0.5,-0.85) node {\scriptsize $\R^d$};
\draw (-0.25,1.5) node {\scriptsize $\C$};
\filldraw[opacity = 0.5, draw = gray, fill = white, thin, bottom color = white, top color = gray!40] plot [smooth, tension=0.8] coordinates {(2.5,0.7)(2.25,0.8)(1.455,0.7)(0.75,0.85)(0.25,1)(0.98,0.35)(1.8,0.15)(2.25,0.2)};
\filldraw[opacity = 0.8, draw = black, fill = white, thin, bottom color = white, top color = gray!40] plot [smooth, tension=0.6] coordinates {(2.5,-0.4)(2.25,-0.35)(1.45,-0.5)(0.75,-0.35)(0.25,-0.435)(0.75,-1)(1.45,-1.25)(2.25,-1.1)};
\draw[black, dashed, line width =0.05] plot [smooth, tension=0.8] coordinates {(0.25,-0.435)(0.75,-0.45)(1.45,-0.65)(2.25,-0.5)(2.45,-0.55)};
\draw[black, dashed, line width =0.05] plot [smooth, tension=0.8] coordinates {(0.25,-0.435)(0.75,-0.6)(1.45,-0.85)(2.25,-0.7)(2.35,-0.725)};
\draw[black, dashed, line width =0.05] plot [smooth, tension=0.8] coordinates {(0.25,-0.435)(0.75,-0.8)(1.45,-1.05)(2.3,-0.9)};
\draw[black, line width =0.5] plot [smooth, tension=0.8] coordinates {(0.25,-0.43)(0.75,-0.45)(1.25,-0.635)};
\draw[black, line width =0.5] plot [smooth, tension=0.8] coordinates {(1.05,-0.735)(1.45,-0.855)(1.8,-0.82)};
\draw[black, line width =0.5] plot [smooth, tension=0.8] coordinates {(1.7,-1.04)(2,-0.98)(2.3,-0.9)};
\draw[black, line width =0.5] plot [smooth, tension=0.8] coordinates {(0.25,1.02)(0.75,0.75)(1.25,0.6)};
\draw[black, line width =0.5] plot [smooth, tension=0.8] coordinates {(1.05,0.5)(1.45,0.425)(1.8,0.5)};
\draw[black, line width =0.5] plot [smooth, tension=0.8] coordinates {(1.7,0.35)(2,0.425)(2.3,0.4)};
\draw[black, dashed] (0.25,-0.45)--(0.25,1);
\draw[black, dashed] (1.05,-0.745)--(1.05,0.485);
\draw[black, dashed] (2,-1)--(2,0.41);
\draw[black] (0.25,1) node {\LARGE $\cdot$};
\draw[black] (1.05,0.485) node {\LARGE $\cdot$};
\draw[black] (2,0.41) node {\LARGE $\cdot$};
\draw[black] (0.6,1.25) node {\scriptsize $\varphi(x)$};
\draw[black] (1.5,0.95) node {\scriptsize $\psi_{(0,t_1)}(x)$};
\draw[black] (2.6,0.6) node {\scriptsize $\psi_{(0,t_2)}(x)$};
\draw (0.25,-0.455) node {\LARGE $\cdot$};
\draw (1.05,-0.745) node {\LARGE $\cdot$};
\draw (2,-1) node {\LARGE $\cdot$};
\draw (0.1,-0.6) node {\scriptsize $x$};
\draw (0.75,-1.5) node {\scriptsize $\Phi_{(0,t_1)}^{u_1}(x)$ \hspace{0.2cm}$\dots$};
\draw (2.55,-1.5) node {\scriptsize $\Phi_{(0,t_2)}^{u_3}(x)$};
\end{scope}
\end{tikzpicture}
}
\caption{{\small \textit{Representation of a particular curve of Koopman observables corresponding to measurements evaluated along a single controlled flow (left) and of a general curve of observables that may a priori jump in a measurable way between different controlled flows (right).}}}
\label{fig:MeasurableSel}
\end{figure}

Throughout the manuscript, many of our results will rely on the assumption that the set of controlled velocities is convex, in a way which captures the fact that Koopman operators act globally -- and not pointwisely -- on the underlying state space. 

\begin{taggedhypsing}{\textbn{(C)}}
\label{hyp:C}
The set of controlled vector fields $\Fpazo:= \big\{ f_u \in C^0(\R^d,\R^d) ~\,\textnormal{s.t.}~ u \in U \big\}$ is convex.  
\end{taggedhypsing}

A prototypical example of admissible velocity set satisfying Hypotheses \textnormal{\ref{hyp:H}} and \textnormal{\ref{hyp:C}} above is given by nonlinear control-affine systems of the form 
\begin{equation*}
f_u(x) := f_0(x) + \sum_{k=1}^n u_k f_k(x),
\end{equation*}
where $U \subset \R^n$ is convex and $f_0,\dots,f_n \in C^0(\R^d,\R^d)$ are locally Lipschitz vector fields with sublinear growth. It should be stressed however that Hypothesis \textnormal{\ref{hyp:C}} is in general much stronger than the usual condition that $\{f(x,u) ~\, \textnormal{s.t.}~ u \in U \} \subset \R^d$ be a convex set for all fixed $x \in \R^d$.

\medskip

In the ensuing proposition, we show that Koopman operators have compact images whenever the set of admissible controlled vector fields is convex in the above sense. 

\begin{proposition}[Topological properties of Koopman operators]
\label{prop:KoopmanTopo}
Suppose that Hypotheses \textnormal{\ref{hyp:H}} and \textnormal{\ref{hyp:C}} hold. Then, the sets $\Kpazo_{(\tau,t)}(\varphi) \subset\Xpazo$ are compact for all times $\tau,t \in [0,T]$ and each $\varphi \in \Xpazo$, and the operators $\Kpazo_{(\tau,t)} : \Xpazo \tto \Xpazo$ are closed processes. 
\end{proposition}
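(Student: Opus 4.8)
The plan is to prove the two assertions separately, handling the compactness of the images first since the closed-process property will then follow with little extra work. Recall that a closed process is a set-valued map whose graph is a closed cone. The cone property of $\Graph(\Kpazo_{(\tau,t)})$ is immediate from the homogeneity relation $\Kpazo_{(\tau,t)}(\alpha\varphi) = \alpha\Kpazo_{(\tau,t)}(\varphi)$ for $\alpha \in \R$ recorded above. For closedness of the graph, I would take a sequence $(\varphi_n,\psi_n) \in \Graph(\Kpazo_{(\tau,t)})$ with $\varphi_n \to \varphi$ and $\psi_n \to \psi$ in $\Xpazo$, and invoke the $1$-Lipschitz estimate from the preceding proposition to get $\dist_{\Xpazo}\big(\psi_n \, ; \Kpazo_{(\tau,t)}(\varphi)\big) \leq \Norm{\varphi_n - \varphi}_{\Xpazo}$; letting $n \to +\infty$ and using that $\Kpazo_{(\tau,t)}(\varphi)$ is closed (a consequence of its compactness) yields $\psi \in \Kpazo_{(\tau,t)}(\varphi)$. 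Hence everything reduces to the compactness of the images.

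To show $\Kpazo_{(\tau,t)}(\varphi)$ is compact, since $\Xpazo$ is metric it suffices to establish sequential compactness. Given $\psi_n = \varphi \circ \Phi_{(\tau,t)}^{u_n}$ with $u_n(\cdot) \in \Upazo$, the estimates \eqref{eq:StabEst} confine all the relevant trajectories to a fixed compact ball $K' := B(0,M_R) \subset \R^d$, provide a compact set $K_\varphi$ containing the supports of every $\psi_n$, and make the flows $\Phi_{(\tau,\cdot)}^{u_n}$ equibounded and equi-Lipschitz on $K_\varphi$. The decisive step is to manufacture a single limiting control. For this I would regard the velocity curves $g_n : s \in [0,T] \mapsto f_{u_n(s)}|_{K'} \in C^0(K',\R^d)$ as elements of $L^1\big([0,T],C^0(K',\R^d)\big)$: by Hypothesis \ref{hyp:H}-$(i)$ they are uniformly bounded by $m(1+M_R)$, and by Hypothesis \ref{hyp:C} together with Proposition \ref{prop:Compact} they take values in the \emph{convex compact} set $\Fpazo|_{K'} \subset C^0(K',\R^d)$ (convex as the linear restriction of the convex set $\Fpazo$, compact as a continuous image of the compact set $\Fpazo$). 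The weak $L^1$-compactness criterion of Proposition \ref{prop:WeakCompactness} then yields a subsequence with $g_{n_k} \rightharpoonup g$ weakly, where $g(s) \in \Fpazo|_{K'}$ almost everywhere, and Filippov's selection principle (Theorem \ref{thm:FilippovSel}, applied to the constant multimap $s \tto U$ and the continuous evaluation $\Psi(s,u) := f_u|_{K'}$) furnishes a measurable $u(\cdot) \in \Upazo$ with $g(s) = f_{u(s)}|_{K'}$ almost everywhere.

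It then remains to pass to the limit in \eqref{eq:ContFlow}. Fixing $x \in K_\varphi$ and writing $x_n(r) := \Phi_{(\tau,r)}^{u_n}(x)$, I would extract via Arzelà–Ascoli a uniform limit $\bar x(\cdot)$ and, for each $s$, decompose
\[
\int_\tau^s g_{n_k}(r)\big(x_{n_k}(r)\big)\,\dn r = \int_\tau^s g_{n_k}(r)\big(\bar x(r)\big)\,\dn r + \int_\tau^s \big[ g_{n_k}(r)\big(x_{n_k}(r)\big) - g_{n_k}(r)\big(\bar x(r)\big) \big]\,\dn r .
\]
The second integral vanishes as $k \to +\infty$ by the uniform Lipschitz bound of Hypothesis \ref{hyp:H}-$(ii)$ together with the uniform convergence $x_{n_k} \to \bar x$, while the first converges to $\int_\tau^s g(r)(\bar x(r))\,\dn r = \int_\tau^s f(\bar x(r),u(r))\,\dn r$ upon applying the last assertion of Proposition \ref{prop:WeakCompactness}, componentwise, to the essentially bounded measurable family of evaluation functionals $r \mapsto \big(h \mapsto h(\bar x(r))\big)$. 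Passing to the limit in \eqref{eq:ContFlow} shows that $\bar x(\cdot)$ solves \eqref{eq:ContCauchy} for the control $u(\cdot)$, so $\bar x = \Phi_{(\tau,\cdot)}^u(x)$ by uniqueness; since this limiting control is the \emph{same} for every $x$, Arzelà–Ascoli upgrades the pointwise convergence to $\Phi_{(\tau,t)}^{u_{n_k}} \to \Phi_{(\tau,t)}^u$ uniformly on $K_\varphi$, whence $\psi_{n_k} \to \varphi \circ \Phi_{(\tau,t)}^u \in \Kpazo_{(\tau,t)}(\varphi)$ in $\Xpazo$. The main obstacle is precisely this construction of a single admissible control realizing the limiting flow simultaneously at all points $x$: mere pointwise convexity of the velocity sets $\{f(x,u) \, : \, u \in U\}$ would not suffice, and it is the stronger convexity of $\Fpazo$ as a subset of $C^0(\R^d,\R^d)$ postulated in Hypothesis \ref{hyp:C} that renders $\Fpazo|_{K'}$ convex and thereby legitimizes the weak-compactness argument.
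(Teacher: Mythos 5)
Your proof is correct and follows essentially the same route as the paper's: weak $L^1$-compactness of the velocity curves valued in the convex compact set of admissible fields (Proposition \ref{prop:WeakCompactness}), evaluation against Dirac masses along the limiting trajectory, and Filippov's selection principle (Theorem \ref{thm:FilippovSel}) to manufacture a single admissible control, which is precisely the paper's argument. The only deviations are cosmetic streamlinings -- you work on one sufficiently large compact set where the paper runs a diagonal argument over an increasing family of balls, and you obtain graph-closedness from the $1$-Lipschitz estimate combined with compactness of the images where the paper reruns its compactness argument -- neither of which changes the substance.
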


\begin{proof}
To show that, given $\tau,t \in [0,T]$ and some $\varphi \in\Xpazo$, the sets $\Kpazo_{(\tau,t)}(\varphi)$ are compact, fix $(\psi_{(\tau,t)}^n) \subset \Kpazo_{(\tau,t)}(\varphi)$ and let $(u_n(\cdot)) \subset \Upazo$ be the sequence of signals such that $\psi_{(\tau,t)}^n = \varphi \circ\Phi_{(\tau,t)}^{u_n}$ for each $n \geq 1$. Recall also that by Theorem \ref{thm:Well-posed}, there exist for each $R>0$ a pair of constants $M_R,L_R>0$ such that
\begin{equation}
\label{eq:FlowEstKoopman}
\big| \Phi_{(\tau,t)}^{u_n}(x) \big| \leq M_R \qquad \text{and} \qquad \big| \Phi_{(\tau,t)}^{u_n}(x) - \Phi_{(\tau,t)}^{u_n}(y) \big| \leq L_R |x-y|,
\end{equation}
for $x,y \in B(0,R)$ and each $n \geq 1$. Therefore, the sequence of maps $(\Phi_{(\tau,t)}^{u_n}) \subset C^0(\R^d,\R^d)$ is locally uniformly valued in a compact set and locally equi-Lipschitz, which by the Ascoli-Arzel\`a theorem (see e.g. \cite[Chapter 7 Theorem 18]{Kelley1975}) yields the existence of an element $\Phi_{(\tau,t)} \in C^0(\R^d,\R^d)$ such that 
\begin{equation}
\label{eq:FlowConv}
\big\| \Phi_{(\tau,t)} - \Phi_{(\tau,t)}^{u_{n_k}} \big\|_{C^0(K,\R^d)} ~\underset{k \to+\infty}{\longrightarrow}~ 0
\end{equation}
for each compact set $K \subset \R^d$, along a subsequence $(u_{n_k}(\cdot)) \subset \Upazo$. Observing now that 
\begin{equation*}
K_{\varphi} := \overline{\bigcup_{n \geq 1} \bigcup_{\tau,t \in [0,T]} \Phi^{u_n}_{(t,\tau)}(\supp(\varphi))}
\end{equation*}
is a compact set as a consequence of \eqref{eq:FlowEstKoopman}, it necessarily follows that
\begin{equation*}
\Norm{\varphi\circ\Phi_{(\tau,t)} - \psi_{(\tau,t)}^{n_k}}_{\Xpazo} ~\underset{k \to +\infty}{\longrightarrow}~ 0
\end{equation*}
along the same subsequence. Hence, to conclude, there remains to show that
\begin{equation}
\label{eq:FlowControlEq}
\Phi_{(\tau,t)} = \Phi_{(\tau,t)}^u    
\end{equation}
for some admissible control $u(\cdot) \in \Upazo$. To this end, recall first that by Proposition \ref{prop:Compact}, the set of admissible velocities $\Fpazo \subset C^0(\R^d,\R^d)$ is convex and compact for the topology of local uniform convergence. Thus, for each compact set $K \subset \R^d$, the collection of restricted vector fields
\begin{equation*}
\Fpazo_K := \Big\{ f_{|K} \in C^0(K,\R^d) ~\, \textnormal{s.t.}~ f \in \Fpazo \Big\}
\end{equation*}
is a compact and convex set as well. Therefore, we can inductively apply the compactness criterion of Proposition \ref{prop:WeakCompactness} on a countably increasing family of balls covering $\R^d$ along with a diagonal argument to obtain the existence of an $\Lcal^1$-measurable map $s \in [0,T] \mapsto f(s) \in \Fpazo$ satisfying
\begin{equation}
\label{eq:WeakConvergenceVelocity}
\INTSeg{\INTDom{\zeta \cdot \Big( f(s,x) - f_{u_{n_k}(s)}(x) \Big)}{\R^d}{\mu(s)(x)}}{s}{0}{T} ~\underset{k \to +\infty}{\longrightarrow}~ 0 
\end{equation}
for every $\mu : [0,T] \to \Mcal(K,\C)$ complying with the conditions of Proposition \ref{prop:WeakCompactness} for $\Xpazo := C^0(K,\R^d)$, each $\zeta \in \R^d$ and every compact set $K \subset \R^d$. Therein, the convergence holds along a subsequence that may depend on the set $K \subset \R^d$ itself, and that we do not relabel. At this stage, we consider the set
\begin{equation*}
K_x := \overline{\bigcup_{n \geq 1} \bigcup_{\tau,s \in [0,T]} \Phi^{u_n}_{(\tau,s)}(x)}, 
\end{equation*}
defined for any $x \in \R^d$, and note that it is compact, again by \eqref{eq:FlowEstKoopman}. Setting now
\begin{equation*}
\mu(s) := \mathds{1}_{[\tau,t]}(s) \, \delta_{\Phi_{(\tau,s)}(x)} \in \Mcal(K_x,\R)
\end{equation*}
for $\Lcal^1$-almost every $s \in [0,T]$, it follows from \eqref{eq:WeakConvergenceVelocity} that 
\begin{equation*}
\INTSeg{\zeta \cdot \Big( f(s,\Phi_{(\tau,s)}(x)) - f_{u_{n_k}(s)} \big(\Phi_{(\tau,s)}(x) \big) \Big)}{s}{\tau}{t} ~\underset{k \to +\infty}{\longrightarrow}~ 0
\end{equation*}
for every $\zeta\in\R^d$, along a subsequence that may depend on $(t,x) \in [0,T] \times \R^d$. Besides
\begin{equation*}
\INTSeg{\,\NormC{ f_{u_{n_k}(s)} \big( \Phi_{(\tau,s)} \big) - f_{u_{n_k}(s)} \big( \Phi_{(\tau,s)}^{u_{n_k}} \big)}{0}{K_x,\R^d}}{s}{\tau}{t} ~\underset{k \to +\infty}{\longrightarrow}~ 0
\end{equation*}
as a byproduct of \eqref{eq:FlowConv} and Hypothesis \textnormal{\ref{hyp:H}}-$(ii)$, from whence we can conclude by merging the previous two equations that 
\begin{equation}
\label{eq:WeakConvergenceVelocityBis}
\INTSeg{f_{u_{n_k}(s) } \Big(\Phi_{(\tau,s)}^{u_{n_k}}(x) \Big)}{s}{\tau}{t} ~\underset{k \to +\infty}{\longrightarrow}~ \INTSeg{f \Big(s,\Phi_{(\tau,s)}(x) \Big)}{s}{\tau}{t}
\end{equation}
along a subsequence that may depend on $(t,x) \in [0,T] \times \R^d$. Since the latter are arbitrary, we can deduce from \eqref{eq:FlowConv} and \eqref{eq:WeakConvergenceVelocityBis} that
\begin{equation}
\label{eq:EquationPhit}
\Phi_{(\tau,t)}(x) = x + \INTSeg{f \Big(s,\Phi_{(\tau,s)}(x)\Big)}{s}{\tau}{t}
\end{equation}
for all $(t,x) \in [0,T] \times \R^d$. Finally, upon noting that  
\begin{equation*}
f(s) \in \Fpazo = \Big\{ f_u \in C^0(\R^d,\R^d) ~\, \textnormal{s.t.}~ u \in U \Big\}
\end{equation*}
for $\Lcal^1$-almost every $s \in[0,T]$, while observing that the map $u \in U \mapsto f_u \in C^0(\R^d,\R^d)$ is continuous under Hypotheses \textnormal{\ref{hyp:H}}, we may apply Theorem \ref{thm:FilippovSel} to infer the existence of some control signal $u(\cdot) \in \Upazo$ such that
\begin{equation*}
f(s) = f_{u(s)}
\end{equation*}
for $\Lcal^1$-almost every $s \in [0,T]$. This together with \eqref{eq:EquationPhit} amounts to \eqref{eq:FlowControlEq}, and thus concludes the proof of our first claim.

We now prove that $\Kpazo_{(\tau,t)} : \Xpazo \tto \Xpazo$ is a closed process for all times $\tau,t \in[0,T]$, namely that its graph is a closed cone. The latter of these properties is straightforward, since for each $\alpha>0$ and every $(\varphi,\psi_{(\tau,t)}) \in\Graph(\Kpazo_{(\tau,t)})$, one has that 
\begin{equation*}
\alpha \psi_{(\tau,t)} = \alpha \varphi \circ\Phi_{(\tau,t)}^u \in \Kpazo_{(\tau,t)}(\alpha \varphi)
\end{equation*}
where $u(\cdot) \in \Upazo$. In order to prove that $\Graph(\Kpazo_{(\tau,t)})$ is closed, we consider a sequence $((\varphi_n,\psi_{(\tau,t)}^n)) \subset \Graph(\Kpazo_{(\tau,t)})$ such that 
\begin{equation}
\label{eq:ClosedGraphKoopman}
\Norm{\varphi - \varphi_n}_{\Xpazo} \underset{n\to + \infty}{\longrightarrow} 0 \qquad \text{and} \qquad \Norm{\psi - \psi_{(\tau,t)}^n}_{\Xpazo} \underset{n\to + \infty}{\longrightarrow} 0,       
\end{equation}
and recall that by definition, there exists $(u_n(\cdot)) \subset\Upazo$ such that
\begin{equation*}
\psi_{(\tau,t)}^n = \varphi_n \circ \Phi_{(\tau,t)}^{u_n} 
\end{equation*}
for each $n\geq1$. Then, upon repeating the compactness argument detailed above combined with \eqref{eq:ClosedGraphKoopman}, it follows that $\psi \in \Kpazo_{(\tau,t)}(\varphi)$, which concludes the proof.  
\end{proof}


\section{Set-valued Liouville and Perron-Frobenius operators}
\label{section:LiouvillePerron}

In this section, we define the set-valued counterparts of the Liouville and Perron-Frobenius operators, for which we provide several structure results and explicit characterisations. In what follows, we denote by $\Dpazo:= C^1_c(\R^d,\C)$ the separable normed space of continuously differentiable complex-valued functions with compact support.


\subsection{Set-valued Liouville operators and Koopman dynamics}
 \label{subsection:Liouville}

In what ensues, we define the set-valued Liouville operator as the collection of all Liouville operators associated with each individual controls, and show that the latter can be rigorously related to the infinitesimal behaviour of the semigroup at time $t = \tau$. 

\begin{definition}[Set-valued Liouville operators]\label{def:L}
We define the \textnormal{set-valued Liouville operator} $\Lpazo : \Dpazo \tto \Xpazo$ associated with \eqref{eq:ContCauchy} as
\begin{equation}
\label{eq:LiouvilleDef}
\Lpazo(\varphi) := \Big\{ \nabla_x \varphi \cdot f_u ~\, \textnormal{s.t.}~ u \in U \Big\} \subset \Xpazo
\end{equation}
for each $\varphi \in \Dpazo$. 
\end{definition}

We start by establishing some of the simple topological properties of these Liouville operators. These results will prove insightful when studying its point spectrum further down in Section \ref{subsection:Spectral}.

\begin{proposition}[Topological properties of the Liouville operator]
Suppose that Hypotheses \textnormal{\ref{hyp:H}} and \textnormal{\ref{hyp:C}} hold. Then, the set-valued Liouville operator $\Lpazo : \Dpazo \tto \Xpazo$ is a fan with compact images.
\end{proposition}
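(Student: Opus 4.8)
The plan is to verify, one by one, the defining properties of a fan with compact images collected in Definition \ref{def:ClosedProcess}: that $\Lpazo : \Dpazo \tto \Xpazo$ has compact convex images, that its graph is a closed cone, that $0 \in \Lpazo(0)$, and that it is subadditive. Several of these are purely algebraic and follow at once from the linearity of the gradient operator $\varphi \mapsto \nabla_x \varphi$. Indeed, $\Lpazo(0) = \{ \nabla_x 0 \cdot f_u ~\textnormal{s.t.}~ u \in U\} = \{0\}$, so that $0 \in \Lpazo(0)$, while for any $\alpha \in \R$ one has $\nabla_x(\alpha \varphi) = \alpha \nabla_x \varphi$ and therefore $\Lpazo(\alpha \varphi) = \alpha \Lpazo(\varphi)$, which yields homogeneity. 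Subadditivity is equally immediate: for $\varphi_1,\varphi_2 \in \Dpazo$ and $u \in U$, the identity $\nabla_x(\varphi_1 + \varphi_2) \cdot f_u = \nabla_x \varphi_1 \cdot f_u + \nabla_x \varphi_2 \cdot f_u$ shows that $\Lpazo(\varphi_1 + \varphi_2) \subset \Lpazo(\varphi_1) + \Lpazo(\varphi_2) \subset \overline{\Lpazo(\varphi_1) + \Lpazo(\varphi_2)}$.

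For the compactness and convexity of the images, I would fix $\varphi \in \Dpazo$ and consider the map $\Theta_\varphi : u \in U \mapsto \nabla_x \varphi \cdot f_u \in \Xpazo$, whose range is exactly $\Lpazo(\varphi)$. Writing $K := \supp(\varphi)$, which is compact, any convergence $u_n \to u$ in $U$ forces $f_{u_n} \to f_u$ uniformly on $K$ by the argument underlying Proposition \ref{prop:Compact}; since $\nabla_x \varphi$ is bounded and supported in $K$, this gives $\Norm{\Theta_\varphi(u_n) - \Theta_\varphi(u)}_{\Xpazo} \to 0$, so that $\Theta_\varphi$ is continuous and $\Lpazo(\varphi) = \Theta_\varphi(U)$ is compact as the continuous image of a compact metric space. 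Convexity in turn hinges on Hypothesis \ref{hyp:C}: the map $f \mapsto \nabla_x \varphi \cdot f$ is linear and $\Lpazo(\varphi) = \{ \nabla_x \varphi \cdot f ~\textnormal{s.t.}~ f \in \Fpazo \}$ is the image of the convex set $\Fpazo$ under it, hence $\Lpazo(\varphi)$ is convex.

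It remains to prove that $\Graph(\Lpazo)$ is a closed cone, the cone property being a restatement of the homogeneity established above. For closedness, let $(\varphi_n, \psi_n) \in \Graph(\Lpazo)$ converge to $(\varphi,\psi) \in \Dpazo \times \Xpazo$, and write $\psi_n = \nabla_x \varphi_n \cdot f_{u_n}$ for a sequence $(u_n) \subset U$. The delicate point, which I expect to be the main obstacle, is that one cannot pass to the limit uniformly in the products $\nabla_x \varphi_n \cdot f_{u_n}$: convergence in the $C^1$-norm does not keep the supports of the $\varphi_n$ inside a fixed compact set, while the fields $f_{u_n}$ have unbounded sublinear growth, so the two effects cannot obviously be controlled together. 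I would bypass this by arguing pointwise. Extracting via the compactness of $U$ a subsequence with $u_{n_k} \to u \in U$, Hypothesis \ref{hyp:H}-$(i)$ gives $f_{u_{n_k}}(x) \to f_u(x)$ for every $x \in \R^d$, while $\nabla_x \varphi_{n_k}(x) \to \nabla_x \varphi(x)$ pointwise as a consequence of the $C^1$-convergence $\varphi_n \to \varphi$. Hence $\psi_{n_k}(x) = \nabla_x \varphi_{n_k}(x) \cdot f_{u_{n_k}}(x) \to \nabla_x \varphi(x) \cdot f_u(x)$ for each $x$, whereas the convergence $\psi_n \to \psi$ in $\Xpazo$ forces $\psi_{n_k}(x) \to \psi(x)$. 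Identifying the two pointwise limits yields $\psi = \nabla_x \varphi \cdot f_u \in \Lpazo(\varphi)$, so that $(\varphi,\psi) \in \Graph(\Lpazo)$ and the graph is closed. The crux is thus to avoid any uniform estimate on the products and to rely solely on pointwise convergence of the factors.
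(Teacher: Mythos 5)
Your proof is correct and follows essentially the same route as the paper: you verify the fan axioms directly, derive convexity of the images from Hypothesis \ref{hyp:C}, and obtain both compactness of the images and closedness of the graph from the compactness of the control set (equivalently, of $\Fpazo$ as in Proposition \ref{prop:Compact}). Your pointwise passage to the limit in the closed-graph step is just an explicit rendering of what the paper compresses into ``the conclusion simply follows from the compactness of the set of admissible velocities,'' and it is valid --- the ``delicate point'' you flag is in fact harmless, since identifying the limit of $\psi_{n_k}$ only requires pointwise (or locally uniform) convergence of the products, exactly as you argue.
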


\begin{proof}
The fact that $\Lpazo : \Dpazo \tto \Xpazo$ has convex and compact images is a direct consequence of Hypothesis \textnormal{\ref{hyp:C}}, on the one hand, and of Hypotheses \textnormal{\ref{hyp:H}} together with Proposition \ref{prop:Compact} on the other hand. Besides, it follows from its very definition that $0 \in \Lpazo(0)$ and
\begin{equation*}
\begin{aligned}
\Lpazo(\varphi_1 + \varphi_2) & = \Big\{ \big( \nabla_x \varphi_1 + \nabla_x \varphi_2 \big) \cdot f_u ~\, \textnormal{s.t.}~ u \in U \Big\} \\
& \subset \Big\{ \nabla_x \varphi_1 \cdot f_{u_1} + \nabla_x \varphi_2 \cdot f_{u_2} ~\, \textnormal{s.t.}~ u_1,u_2 \in U \Big\} =  \Lpazo(\varphi_1) + \Lpazo(\varphi_2) 
\end{aligned}
\end{equation*}
for each $\varphi_1,\varphi_2 \in \Dpazo$. Hence, there simply remains to show that $\Lpazo : \Dpazo \tto \Xpazo$ is a closed process, namely that its graph is a closed cone. The latter of these properties is straightforward, as for each $\alpha >0$ and every $(\varphi,\psi) \in \Graph(\Lpazo)$, it holds that
\begin{equation*}
\alpha \psi = \alpha \nabla_x\varphi \cdot f_u \in \Lpazo(\alpha\varphi)
\end{equation*}
where $u \in U$ is some fixed control value. Suppose now that we are given a sequence $((\varphi_n,\psi_n)) \subset \Graph(\Lpazo)$ such that 
\begin{equation}
\label{eq:ConvergenceClosedGraph}
\Norm{\varphi - \varphi_n}_{\Dpazo} ~\underset{n \to +\infty}{\longrightarrow}~ 0 \qquad \text{and} \qquad \Norm{\psi - \psi_n}_{\Xpazo} ~\underset{n \to +\infty}{\longrightarrow}~ 0.
\end{equation}
This fact together with the definition \eqref{eq:LiouvilleDef} of the Liouville operator yield the existence of a sequence $(u_n) \subset U$ for which
\begin{equation*}
\psi_n = \nabla_x\varphi_n \cdot f_{u_n} 
\end{equation*}
for all $n \geq 1$. The conclusion simply follows then from the compactness of the set of admissible velocities established in Proposition \ref{prop:Compact}. 
\end{proof}

In the following theorem, we prove a result which establishes an explicit connection between the Liouville operator and adequate set-valued derivatives of the Koopman semigroup, in the spirit of \cite[Section 2]{Frankowska1995}.

\begin{theorem}[Generator of the Koopman semigroups]
\label{thm:RepresentationLiouville}
Suppose that Hypotheses \textnormal{\ref{hyp:H}} hold and fix a time $\tau \in [0,T]$. Then, the following inclusions 
\begin{equation}
\label{eq:LiminfInclusionLiouville}
\Lpazo(\varphi) \subset \Liminf{t \to \tau} \, \frac{\Kpazo_{(\tau,t)}(\varphi)-\varphi}{t-\tau}
\end{equation}
and 
\begin{equation}
\label{eq:LimsupInclusionLiouville}    
\Limsup{t \to \tau} \, \frac{\Kpazo_{(\tau,t)}(\varphi)-\varphi}{t-\tau} \subset \co \Lpazo(\varphi)
\end{equation}
hold for each $\varphi \in \Dpazo$. Furthermore, if Hypothesis \textnormal{\ref{hyp:C}} holds, then the set-valued Liouville operator $\Lpazo : \Dpazo \tto \Xpazo$ is the infinitesimal generator of the Koopman semigroup, in the sense that
\begin{equation}
\label{eq:LiouvilleCharac}
\Lpazo(\varphi) = \Lim{t \to \tau} \frac{\Kpazo_{(\tau,t)}(\varphi)-\varphi}{t-\tau}
\end{equation}
for each $\varphi \in \Dpazo$. 
\end{theorem}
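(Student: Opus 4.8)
The plan is to establish the two inclusions \eqref{eq:LiminfInclusionLiouville} and \eqref{eq:LimsupInclusionLiouville} separately under Hypotheses \ref{hyp:H}, and then to deduce the full Kuratowski-Painlevé identity \eqref{eq:LiouvilleCharac} under the extra convexity assumption \ref{hyp:C} by a short sandwiching argument. Throughout, I would fix $\varphi \in \Dpazo$ and note, as in the preceding proofs, that there is a compact set $K \subset \R^d$ outside of which the observables $\varphi$, $\nabla_x\varphi \cdot f_u$ (for $u \in U$) and $\varphi \circ \Phi^u_{(\tau,t)}$ (for all $u(\cdot) \in \Upazo$ and all $t$ in a fixed neighbourhood of $\tau$) vanish identically, so that convergence in $\Xpazo$ reduces to uniform convergence over $K$; this is justified by the flow estimate \eqref{eq:StabEst}, which keeps the relevant supports within an $L_R(t-\tau)$-neighbourhood of $\supp(\varphi)$. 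The basic identity underpinning everything is the fundamental theorem of calculus applied to $s \mapsto \varphi(\Phi^u_{(\tau,s)}(x))$, which gives, for any control $u(\cdot)$,
\begin{equation*}
\frac{\varphi \circ \Phi^u_{(\tau,t)}(x) - \varphi(x)}{t-\tau} = \frac{1}{t-\tau}\INTSeg{\nabla_x\varphi\big(\Phi^u_{(\tau,s)}(x)\big) \cdot f\big(\Phi^u_{(\tau,s)}(x),u(s)\big)}{s}{\tau}{t}.
\end{equation*}

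For the lower inclusion, I would test an arbitrary element $\nabla_x\varphi \cdot f_{u_0} \in \Lpazo(\varphi)$ against the constant control $u(\cdot) \equiv u_0$. Using the displayed identity together with the bound $|\Phi^{u_0}_{(\tau,s)}(x) - x| \leq L_R|s-\tau|$ coming from \eqref{eq:StabEst} and the uniform continuity of the map $y \mapsto \nabla_x\varphi(y)\cdot f(y,u_0)$ on $K$, I would show that the integrand tends to $\nabla_x\varphi(x)\cdot f_{u_0}(x)$ uniformly in $x \in K$ as $s \to \tau$, whence the difference quotient converges to $\nabla_x\varphi \cdot f_{u_0}$ in the $\Xpazo$-norm. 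Since that difference quotient belongs to $(\Kpazo_{(\tau,t)}(\varphi)-\varphi)/(t-\tau)$ for every $t$, its distance to this set tends to zero, which is precisely \eqref{eq:LiminfInclusionLiouville}.

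For the upper inclusion, I would take $\psi$ in the upper limit, realised as $\psi_n \to \psi$ along times $t_n \to \tau$ and controls $u_n(\cdot)$, and write each $\psi_n$ via the integral identity above. The key estimate is that
\begin{equation*}
\nabla_x\varphi\big(\Phi^{u_n}_{(\tau,s)}(x)\big)\cdot f\big(\Phi^{u_n}_{(\tau,s)}(x),u_n(s)\big) \quad \text{and} \quad \nabla_x\varphi(x)\cdot f_{u_n(s)}(x)
\end{equation*}
differ, uniformly in $x \in K$, $s \in [\tau,t_n]$ and $n$, by a quantity tending to zero with $t_n-\tau$; this follows by splitting the difference into two terms and invoking the flow estimate \eqref{eq:StabEst} together with the uniform continuity of $\nabla_x\varphi$ and the local Lipschitz bound \ref{hyp:H}-$(ii)$. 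Consequently $\psi_n$ is within $o(1)$ of the integral mean $\tfrac{1}{t_n-\tau}\INTSeg{\big(\nabla_x\varphi \cdot f_{u_n(s)}\big)}{s}{\tau}{t_n}$, whose integrand is measurable and takes values in the set $\Lpazo(\varphi)$, which is compact by Proposition \ref{prop:Compact}. The decisive point is then the classical fact that the mean value of a Bochner-integrable function valued in a set lies in the closed convex hull of that set, so each such mean belongs to $\co\Lpazo(\varphi)$; since $\co\Lpazo(\varphi)$ is closed, letting $n \to \infty$ yields $\psi \in \co\Lpazo(\varphi)$, which is \eqref{eq:LimsupInclusionLiouville}.

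Finally, under Hypothesis \ref{hyp:C} the set $\Fpazo$ is convex, so $\Lpazo(\varphi) = \{\nabla_x\varphi \cdot f ~\,\textnormal{s.t.}~ f \in \Fpazo\}$, being the image of a convex compact set under the continuous linear map $f \mapsto \nabla_x\varphi \cdot f$, is itself convex and compact; hence $\co\Lpazo(\varphi) = \Lpazo(\varphi)$. Combining this with the always-valid inclusion of the lower limit in the upper limit and the two inclusions just established gives the chain
\begin{equation*}
\Lpazo(\varphi) \subset \Liminf{t \to \tau}\frac{\Kpazo_{(\tau,t)}(\varphi)-\varphi}{t-\tau} \subset \Limsup{t \to \tau}\frac{\Kpazo_{(\tau,t)}(\varphi)-\varphi}{t-\tau} \subset \co\Lpazo(\varphi) = \Lpazo(\varphi),
\end{equation*}
so all four sets coincide and \eqref{eq:LiouvilleCharac} holds. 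I expect the main obstacle to be the upper inclusion, specifically the uniform-in-$n$ control of the flow-perturbation error and the measure-theoretic justification that the integral means fall into $\co\Lpazo(\varphi)$; the lower inclusion and the concluding sandwich are comparatively routine.
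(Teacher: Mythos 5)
Your proposal is correct, and its overall architecture is the same as the paper's: the liminf inclusion is obtained by testing constant controls $u(\cdot) \equiv u_0$ and expanding the flow to first order (your fundamental-theorem-of-calculus identity is just the integrated form of the paper's Taylor estimate \eqref{eq:LiminfTaylor}), the limsup inclusion by writing the difference quotient as an integral average plus an $O(|t_n-\tau|^2)$ flow-perturbation error, and the identity \eqref{eq:LiouvilleCharac} by the same sandwich $\Lpazo(\varphi) \subset \Liminf{} \subset \Limsup{} \subset \co \Lpazo(\varphi) = \Lpazo(\varphi)$ under Hypothesis \textnormal{\ref{hyp:C}}. The one genuine divergence is in how the limsup step is closed. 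The paper applies the range convexity of the Bochner integral at the level of the \emph{vector fields}, placing the means $F_n := \tfrac{1}{t_n-\tau} \INTSeg{f_{u_n(s)}}{s}{\tau}{t_n}$ in $\co \Fpazo$, and then must invoke Ascoli--Arzel\`a together with the compactness of $\co \Fpazo$ (Proposition \ref{prop:Compact} combined with \cite[Theorem 5.35]{Aliprantis2006}) to extract a limiting field $F \in \co \Fpazo$ and identify the limit quotient as $\nabla_x \varphi \cdot F$. You instead apply the same mean-value fact at the level of the \emph{observables}, putting the averages $\tfrac{1}{t_n-\tau}\INTSeg{\big(\nabla_x \varphi \cdot f_{u_n(s)}\big)}{s}{\tau}{t_n}$ directly into the closed set $\co \Lpazo(\varphi)$, so that mere closedness finishes the argument with no subsequence extraction and no compactness of the hull. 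For the theorem as stated your route is leaner; what the paper's heavier version buys is an explicit representing field $F \in \co \Fpazo$ (under Hypothesis \textnormal{\ref{hyp:C}}, an actual control value $\bar{u} \in U$) attached to the limit, and it is precisely this representation -- not just membership in the hull -- that the paper recycles verbatim in the proofs of Theorem \ref{thm:InfinitesimalPerron} and of the spectral mapping Theorem \ref{thm:Eigenvalues}, where the existence of $\bar{u}_{\lambda} \in U$ is needed. Minor bookkeeping points in your write-up (the support of $\varphi \circ \Phi_{(\tau,t)}^u$ being confined to the compact set $K_{\varphi}$ of \eqref{eq:KphiDef}, the measurability of $s \mapsto \nabla_x \varphi \cdot f_{u_n(s)}$ via the continuity of $u \mapsto f_u$, and the fact that compactness of $\Lpazo(\varphi)$ is not actually needed, only boundedness and closedness of $\co \Lpazo(\varphi)$) are all handled correctly or harmlessly overstated.
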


\begin{proof}
It is quite clear from the definitions of lower and upper limits of sequences of sets provided in Section \ref{subsection:Setvalued} above that
\begin{equation*}
\Liminf{t \to \tau} \, \frac{\Kpazo_{(\tau,t)}(\varphi)-\varphi}{t-\tau} \subset \Limsup{t \to \tau} \, \frac{\Kpazo_{(\tau,t)}(\varphi)-\varphi}{t-\tau}
\end{equation*}
for each $\varphi \in \Dpazo$. Remarking thus that, under Hypothesis \textnormal{\ref{hyp:C}}, there holds
\begin{equation*}
\co \Lpazo(\varphi) = \co \Big\{ \nabla_x \varphi \cdot f ~\, \textnormal{s.t.}~ f \in \Fpazo \Big\} = \Big\{ \nabla_x \varphi \cdot f ~\, \textnormal{s.t.}~ f \in \co \Fpazo \Big\} = \Lpazo(\varphi),
\end{equation*}
it would directly follow from \eqref{eq:LiminfInclusionLiouville} and \eqref{eq:LimsupInclusionLiouville} that the Liouville operator coincides with the set-valued derivative displayed in \eqref{eq:LiouvilleCharac}. 

We thus start by establishing the liminf inclusion \eqref{eq:LiminfInclusionLiouville}, which amounts to finding for each element $\bar{u} \in U$ a curve of  observables $t \in [0,T] \mapsto \psi_{(\tau,t)} \in \Kpazo_{(\tau,t)}(\varphi)$ such that
\begin{equation}
\label{eq:LiminfInc}
\lim_{t \to \tau} \frac{\psi_{(\tau,t)} - \varphi}{t-\tau} = \nabla_x \varphi \cdot f_{\bar{u}}. 
\end{equation}
To this end, consider the constant signal given by $u(t) := \bar{u}$ for all times $t \in [0,T]$, and let $\psi_{(\tau,t)} := \varphi \in \Phi_{(\tau,t)}^u$. Observe then that 
\begin{equation*}
\begin{aligned}
\Phi_{(\tau,t)}^u(x) & = x + \INTSeg{f \Big( \Phi_{(\tau,s)}^{u}(x) , \bar{u} \Big)}{s}{\tau}{t} \\
& = x + (t-\tau) f_{\bar{u}}(x) + \INTSeg{\bigg( f_{\bar{u}} \big( \Phi_{(\tau,s)}^{u}(x) \big) - f_{\bar{u}}(x) \bigg)}{s}{\tau}{t}, 
\end{aligned}
\end{equation*}
for all $x \in\R^d$. By combining Hypothesis \textnormal{\ref{hyp:H}}-$(ii)$ and the Lipschitz estimate of \eqref{eq:StabEst} in Theorem \ref{thm:Well-posed} within the latter identity, one shows that
\begin{equation}
\label{eq:LiminfTaylor}
\sup_{x \in K_{\varphi}} \big| \Phi_{(\tau,t)}^u(x) - x - (t-\tau) f_{\bar{u}}(x) \big| \leq C_{\varphi} |t-\tau|^2
\end{equation}
for all times $t \in [0,T]$, where $C_{\varphi} > 0$ is a constant and $K_{\varphi} \subset \R^d$ is the compact set 
\begin{equation}
\label{eq:KphiDef}
K_{\varphi} := \overline{\bigcup_{u(\cdot) \in \Upazo} \bigcup_{\tau,t \in[0,T]} \Phi_{(t,\tau)}^u \big(\supp(\varphi) \big)},
\end{equation}
whose existence is guaranteed by \eqref{eq:StabEst}. Thence, recalling that $\varphi \in C^1_c(\R^d,\C)$, it follows from \eqref{eq:LiminfTaylor} combined with \eqref{eq:KphiDef} that
\begin{equation*}
\lim_{t \to \tau} \Bigg\| \frac{\varphi \circ \Phi_{(\tau,t)}^u - \varphi - (t-\tau) \nabla_x \varphi \cdot f_{\bar{u}}}{t-\tau} \Bigg\|_{\Xpazo}  = 0,  
\end{equation*}
which then yields \eqref{eq:LiminfInc} by what precedes, since $\bar{u} \in U$ is arbitrary. 

To complete the proof, there remains to derive the limsup inclusion \eqref{eq:LimsupInclusionLiouville}, which can be equivalently recast as the requirement that for every pair of sequences $(t_n) \subset [0,T] \setminus \{\tau\}$ and $(\psi_{(\tau,t_n)}) \subset \Xpazo$ satisfying $\psi_{(\tau,t_n)} \in \Kpazo_{(\tau,t_n)}(\varphi)$ for each $n \geq 1$, there exists a subsequence $t_{n_k} \to\, \tau$ such that 
\begin{equation}
\label{eq:LimsupInc}
\lim_{t_{n_k} \to \, \tau} \hspace{-0.05cm} \frac{\psi_{(\tau,t_{n_k})} - \varphi}{t_{n_k}-\tau} \in \co \Lpazo(\varphi). 
\end{equation}
Fixing such a pair, it follows from Proposition \ref{prop:KoopmanRepresentation} that  
\begin{equation*}
\psi_{(\tau,t_n)} = \varphi \circ \Phi_{(\tau,t_n)}^{u_n}
\end{equation*}
for each $n \geq 1$ and some $(u_n(\cdot)) \subset \Upazo$. Then, by definition of the characteristic flows 
\begin{equation}
\label{eq:Limsup1}
\begin{aligned}
\Phi_{(\tau,t_n)}^{u_n}(x) & = x + \INTSeg{f \Big( \Phi_{(\tau,s)}^{u_n}(x) , u_n(s) \Big)}{s}{\tau}{t_n} \\
& = x + (t_n-\tau) \bigg( \frac{1}{t_n-\tau} \INTSeg{f(x, u_n(s))}{s}{\tau}{t_n} \bigg) \\
& \hspace{0.7cm} + \INTSeg{\bigg( f \Big( \Phi_{(\tau,s)}^{u_n}(x) , u_n(s) \Big) - f(x,u_n(s)) \bigg)}{s}{\tau}{t_n}
\end{aligned}
\end{equation}
for each $n \geq 1$. Denoting by $K_{\varphi} \subset\R^d$ the compact set associated with $\varphi \in \Xpazo$ via \eqref{eq:KphiDef}, it holds that
\begin{equation}
\label{eq:Limsup2}
\sup_{x \in K_{\varphi}} \bigg| \INTSeg{\bigg( f \Big( \Phi_{(\tau,s)}^{u_n}(x) , u_n(s) \Big) - f(x,u_n(s)) \bigg)}{s}{\tau}{t_n} \bigg| \leq C_{\varphi} |t_n-\tau|^2, 
\end{equation}
for each $n \geq 1$ and some constant $C_{\varphi} >0$. Besides, regarding for any given $n \geq 1$ and each compact set $K \subset \R^d$ the map $t \in[0,T] \mapsto f_{u_n(t)} \in C^0(K,\R^d)$ as an element of the Bochner space $L^1([0,T],C^0(K,\R^d))$, it follows from the range convexity of the Bochner integral (see e.g. \cite[Proposition 1.2.12]{AnalysisBanachSpaces}) that
\begin{equation*}
\frac{1}{t_n-\tau} \INTSeg{f_{u_n(s)}}{s}{\tau}{t_n} \in \co \Fpazo
\end{equation*}
for each $n \geq 1$. Under Hypotheses \textnormal{\ref{hyp:H}}, it can further be checked that the mappings 
\begin{equation*}
F_n : x \in K_{\varphi} \mapsto \frac{1}{t_n-\tau} \INTSeg{f_{u_n(s)}(x)}{s}{\tau}{t_n} \in \R^d
\end{equation*}
form a uniformly bounded and equi-Lipschitz family in $C^0(K_{\varphi},\R^d)$, which, by the standard Ascoli-Arzel\`a theorem (see e.g. \cite[Chapter 7 Theorem 18]{Kelley1975}) combined with the fact that $\co \Fpazo \subset C^0(\R^d,\R^d)$ is compact as a consequence of Proposition \ref{prop:Compact} together with \cite[Theorem 5.35]{Aliprantis2006}, entails the existence of some $F \in \co \Fpazo$ for which
\begin{equation}
\label{eq:Limsup3}
\lim_{k \to+\infty} \NormC{F - F_{n_k}}{0}{K_{\varphi},\R^d} = 0   
\end{equation}
along a subsequence $n_k \to +\infty$. Owing to the fact that $\varphi\in C^1_c(\R^d,\C)$, it then follows from \eqref{eq:Limsup1}, \eqref{eq:Limsup2} and \eqref{eq:Limsup3} that 
\begin{equation*}
\psi_{(\tau,t_{n_k})} = \varphi \circ \Phi^{u_{n_k}}_{(\tau,t_{n_k})} = \varphi + (t_{n_k}-\tau) \nabla_x \varphi \cdot F + o(|t_{n_k}-\tau|)
\end{equation*}
when $t_{n_k} \to \,\tau$. This last identity then yields
\begin{equation*}
\lim_{t_{n_k} \to \,\tau} \frac{\psi_{(\tau,t_{n_k})} - \varphi}{t_{n_k}-\tau} = \nabla_x \varphi \cdot F \in \co \Lpazo(\varphi)
\end{equation*}
which closes the proof of Theorem \ref{thm:RepresentationLiouville}.
\end{proof}

\begin{remark}[On the relation between the set-valued Koopman and Liouville operators]
The previous theorem provides us with two insights on the structure of the Koopman and Liouville operators for control systems. Firstly, the inclusions \eqref{eq:LiminfInclusionLiouville} and \eqref{eq:LimsupInclusionLiouville} show that, even when the dynamics is not convex-valued, the infinitesimal behaviour of the set-valued Koopman operators is captured by the natural set-valued counterpart of the Liouville operators introduced in Definition \ref{eq:LiouvilleDef}. In Theorem \ref{thm:Eigenvalues} below, we shall see how this particular fact allows for the derivation of a set-valued spectral mapping theorem, that can be notably applied to finite control sets which are inherently non-convex. Secondly, the identity \eqref{eq:LiouvilleCharac} means in turn that, when the admissible velocities are convex in the sense of Hypothesis \textnormal{\ref{hyp:C}}, the set-valued Liouville operator is the infinitesimal generator of the Koopman semigroup. This transcribes the fact that even though time-dependent Koopman observables cannot be represented in general by a single controlled flow  -- see Figure \ref{fig:MeasurableSel} and Proposition \ref{prop:KoopmanRepresentation} above --, the set of all possible infinitesimal evolutions at a time $t$ close to $\tau$ coincides exactly with those generated by a controlled vector field $f_u \in \Fpazo$ with $u \in U$.
\end{remark}

We close this section by showing in the next proposition how the set-valued Liouville operator encodes the dynamics of those time-dependent Koopman observables which are generated by precisely one control signal, and thus follow an admissible flow of the system.

\begin{proposition}[Dynamics of time-dependent Koopman observables]
\label{prop:KoopmanDynamics}
Fix some $\varphi \in \Dpazo$, suppose that Hypotheses \textnormal{\ref{hyp:H}} hold and assume that $f_u \in C^1(\R^d,\R^d)$ for each $u \in U$. Then, a family of Koopman observables $(\tau,t) \in [0,T] \times [0,T] \mapsto \psi_{(\tau,t)} \in \Kpazo_{(\tau,t)}(\varphi)$ is of the form $\psi_{(\tau,t)} = \varphi \circ \Phi_{(\tau,t)}^u$ for some fixed control signal $u(\cdot) \in \Upazo$ \textnormal{if and only if} it is a strong solution of the differential inclusion
\begin{equation}
\label{eq:KoopmanDynamics}
\left\{
\begin{aligned}
& \partial_{\tau} \psi_{(\tau,t)} \in -\Lpazo(\psi_{(\tau,t)}), \\
& \psi{(t,t)} = \varphi, 
\end{aligned}
\right.
\end{equation}
in the space of observables $(\Dpazo,\Norm{\cdot}_{\Dpazo})$.
\end{proposition}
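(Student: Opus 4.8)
The plan is to fix the terminal time $t \in [0,T]$ throughout and regard the family as a curve $\tau \mapsto \psi_{(\tau,t)}$, treating the two implications separately. Before either direction, I would record that the additional hypothesis $f_u \in C^1(\R^d,\R^d)$ guarantees, via the variational equation together with Theorem \ref{thm:Well-posed}, that each flow $\Phi_{(\tau,t)}^u$ is a $C^1$-diffeomorphism; hence $\varphi \circ \Phi_{(\tau,t)}^u \in \Dpazo$ whenever $\varphi \in \Dpazo$ (its support being the compact set $\Phi_{(t,\tau)}^u(\supp \varphi)$), so that the candidate curves genuinely lie in the space of observables on which $\Lpazo$ acts.

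For the direct implication, assume $\psi_{(\tau,t)} = \varphi \circ \Phi_{(\tau,t)}^u$ for a fixed $u(\cdot) \in \Upazo$. The terminal condition is immediate since $\Phi_{(t,t)}^u = \Id$. To compute $\partial_\tau \psi_{(\tau,t)}$ I would exploit the cocycle identity $\Phi_{(\tau,t)}^u = \Phi_{(\tau+h,t)}^u \circ \Phi_{(\tau,\tau+h)}^u$, which gives
\[
\frac{\psi_{(\tau,t)} - \psi_{(\tau+h,t)}}{h} = \frac{\psi_{(\tau+h,t)} \circ \Phi_{(\tau,\tau+h)}^u - \psi_{(\tau+h,t)}}{h}.
\]
Since $\psi_{(\tau+h,t)} \in \Dpazo$ and $\Phi_{(\tau,\tau+h)}^u(x) - x = \INTSeg{f(\Phi_{(\tau,s)}^u(x),u(s))}{s}{\tau}{\tau+h}$, a first-order Taylor expansion, combined with Hypothesis \ref{hyp:H}-$(ii)$ and the Lipschitz estimate of Theorem \ref{thm:Well-posed}, reduces the right-hand side, uniformly over the relevant compact set, to $\nabla_x \psi_{(\tau+h,t)} \cdot \tfrac{1}{h}\INTSeg{f_{u(s)}}{s}{\tau}{\tau+h}$ up to an $o(1)$ term. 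Viewing $\tau \mapsto f_{u(\tau)}$ as an element of $L^1([0,T],C^0(K,\R^d))$ for each compact $K$ and invoking Bochner--Lebesgue differentiation (over a common full-measure set obtained by exhausting $\R^d$), this average converges to $f_{u(\tau)}$ in $\Xpazo$ for almost every $\tau$. Consequently $\partial_\tau \psi_{(\tau,t)} = -\nabla_x \psi_{(\tau,t)} \cdot f_{u(\tau)} \in -\Lpazo(\psi_{(\tau,t)})$ for a.e. $\tau$, the derivative being taken in $\Xpazo$; absolute continuity of the curve follows from the uniform Lipschitz-in-$\tau$ bounds of Theorem \ref{thm:Well-posed}, so $\psi$ is a strong solution.

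For the converse, suppose $\tau \mapsto \psi_{(\tau,t)}$ is a strong solution with $\psi_{(t,t)} = \varphi$ and $\psi_{(\tau,t)} \in \Kpazo_{(\tau,t)}(\varphi)$. Since $\partial_\tau \psi_{(\tau,t)} \in -\Lpazo(\psi_{(\tau,t)}) = \{ -\nabla_x \psi_{(\tau,t)} \cdot f_u ~\text{s.t.}~ u \in U \}$ a.e., and the parametrisation $(\tau,u) \mapsto -\nabla_x \psi_{(\tau,t)} \cdot f_u$ is measurable in $\tau$ and continuous in $u$ (by Hypotheses \ref{hyp:H} and continuity of $\tau \mapsto \psi_{(\tau,t)}$ in $\Dpazo$), Filippov's principle (Theorem \ref{thm:FilippovSel}) furnishes a measurable selection $v(\cdot) \in \Upazo$ with $\partial_\tau \psi_{(\tau,t)} = -\nabla_x \psi_{(\tau,t)} \cdot f_{v(\tau)}$ a.e. Setting $\tilde\psi_{(\tau,t)} := \varphi \circ \Phi_{(\tau,t)}^v$, the direct implication shows $\tilde\psi$ solves the same terminal-value problem with the same $v$, so that $w_{(\tau,t)} := \psi_{(\tau,t)} - \tilde\psi_{(\tau,t)}$ satisfies $\partial_\tau w_{(\tau,t)} = -\nabla_x w_{(\tau,t)} \cdot f_{v(\tau)}$ a.e. with $w_{(t,t)} = 0$. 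I would then conclude by a characteristics argument: for fixed $(\bar\tau,\bar z)$, the scalar map $s \mapsto w_{(s,t)}\big(\Phi_{(\bar\tau,s)}^v(\bar z)\big)$ is absolutely continuous with a.e. derivative $\partial_\tau w_{(s,t)} + \nabla_x w_{(s,t)} \cdot f_{v(s)}$ evaluated along the trajectory, which vanishes by the equation; being zero at $s = t$, it is identically zero, whence $w_{(\bar\tau,t)}(\bar z) = 0$. As $(\bar\tau,\bar z)$ is arbitrary, $\psi_{(\tau,t)} = \varphi \circ \Phi_{(\tau,t)}^v$.

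The main obstacle is regularity bookkeeping rather than structure: in both directions one must upgrade pointwise-in-$x$ computations to convergence in the norms of $\Xpazo$ and $\Dpazo$, for which the Bochner--Lebesgue differentiation of $\tau \mapsto f_{u(\tau)}$ and the $C^1$-dependence of the flows (granted by $f_u \in C^1$) are essential; and one must justify the chain rule for the composition of the $\Dpazo$-valued absolutely continuous curve $w_{(\cdot,t)}$ with the Lipschitz characteristic $s \mapsto \Phi_{(\bar\tau,s)}^v(\bar z)$, splitting the increment into a time part controlled by a Lebesgue point of $\partial_\tau w$ and a space part handled by the $C^1$ regularity of $w_{(s,t)}$. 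A minor point worth clarifying is the precise sense of \emph{strong solution}: since $\nabla_x \psi \cdot f_u$ is in general only $C^0$, the $\tau$-derivative must be understood in $\Xpazo$, while the curve itself remains valued in $\Dpazo$.
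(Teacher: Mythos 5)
Your proof is correct, and while it shares the paper's overall skeleton --- identifying the inclusion \eqref{eq:KoopmanDynamics} with the nonautonomous transport equation $\partial_\tau \psi_{(\tau,t)} + \nabla_x \psi_{(\tau,t)} \cdot f_{u(\tau)} = 0$ and using Filippov's selection principle (Theorem \ref{thm:FilippovSel}) in the converse direction, exactly as the paper does --- it realises both directions by genuinely different, more self-contained means. In the forward direction, the paper first invokes an external linearisation result (\cite[Proposition A.6]{SemiSensitivity}) to secure the existence of the strong derivative $\partial_\tau \psi_{(\tau,t)}$ in $\Xpazo$, and then derives the transport equation from the observation that $\tau \mapsto \psi_{(\tau,t)} \circ \Phi_{(t,\tau)}^u(y)$ is constant, differentiating this composition by the chain rule; you instead compute the difference quotient head-on via the cocycle identity $\Phi_{(\tau,t)}^u = \Phi_{(\tau+h,t)}^u \circ \Phi_{(\tau,\tau+h)}^u$, a first-order Taylor expansion, and Bochner--Lebesgue differentiation of $\tau \mapsto f_{u(\tau)}$ viewed in $L^1([0,T],C^0(K,\R^d))$. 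In the converse direction, after the identical Filippov selection step, the paper concludes by citing well-posedness of transport equations (\cite[Proposition 2.3]{AmbrosioC2014}), whereas you prove the needed uniqueness by hand: comparing with $\tilde\psi_{(\tau,t)} := \varphi \circ \Phi_{(\tau,t)}^v$, whose admissibility your own forward direction supplies, and showing that the difference is constant along characteristics. What your route buys is independence from the two external citations; what it costs is exactly the regularity bookkeeping you flag yourself: the uniform-over-compacts Taylor remainder requires equicontinuity of the gradients $\nabla_x \psi_{(\tau+h,t)}$ as $h \to 0$, i.e.\ continuity of the flow differentials coming from $f_u \in C^1(\R^d,\R^d)$ and the variational equation, which is precisely what the paper outsources to \cite[Proposition A.6]{SemiSensitivity}, and the chain rule along characteristics must be split into a time increment controlled by Lebesgue points and a space increment controlled by the $C^1$ regularity of $w_{(s,t)}$, as you indicate. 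Your closing clarification --- that the $\tau$-derivative is taken in $\Xpazo$ while the curve remains $\Dpazo$-valued --- matches the sense in which the paper itself defines the strong derivative in \eqref{eq:StrongDerivative}.
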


\begin{proof}
We start by assuming that $\psi_{(\tau,t)} = \varphi \circ \Phi_{(\tau,t)}^u \in \Dpazo$ for some $u(\cdot) \in \Upazo$, and note that, by invoking the results e.g. of \cite[Proposition A.6]{SemiSensitivity} which provide uniform-in-space linearisation results for characteristic flows associated with time-measurable dynamics, the strong limit of difference quotients
\begin{equation}
\label{eq:StrongDerivative}
\partial_{\tau} \psi_{(\tau,t)} := \lim_{h \to 0} \frac{\psi_{(\tau+h,t)} - \psi_{(\tau,t)}}{h} 
\end{equation}
is well-defined in $\Xpazo$ for $\Lcal^1$-almost every $\tau \in [0,T]$ and all times $t \in [0,T]$. Besides, one can easily check that for all $(t,y) \in [0,T] \times \R^d$, the map $\tau \in [0,T] \mapsto \psi_{(\tau,t)} \circ \Phi_{(t,\tau)}^u(y)$ is constant, so that
\begin{equation*}
\begin{aligned}
\tderv{}{\tau} \Big( \psi_{(\tau,t)} \circ \Phi_{(t,\tau)}^u(y) \Big) & = \partial_{\tau} \psi_{(\tau,t)} \circ \Phi_{(t,\tau)}^u(y) + \nabla_x \psi_{(\tau,t)} \circ \Phi_{(t,\tau)}^u(y) \cdot \partial_\tau \Phi_{(t,\tau)}^u(y) \\
& = \Big( \partial_{\tau} \psi_{(\tau,t)} + \nabla_x \psi_{(\tau,t)} \cdot f_{u(\tau)} \Big) \circ \Phi_{(t,\tau)}^u(y) = 0
\end{aligned}
\end{equation*}
for $\Lcal^1$-almost every $\tau \in [0,T]$. Noting that in the previous identity, one may choose $y = \Phi_{(\tau,t)}^u(x)$ for some arbitrary $x \in \R^d$, it then follows that $(\tau,x) \in [0,T] \times \R^d \mapsto \psi_{(\tau,t)}(x) \in \R^d$ is a classical solution of the Cauchy problem
\begin{equation}
\label{eq:TransportEquation}
\left\{
\begin{aligned}
& \partial_{\tau} \psi_{(\tau,t)} + \nabla_x \psi_{(\tau,t)} \cdot f_{u(\tau)} = 0, \\
& \psi_{(t,t)} = \varphi, 
\end{aligned}
\right.
\end{equation}
in $[0,T] \times \R^d$. At this stage, there remains to notice that the map 
\begin{equation*}
(\tau,x) \in [0,T] \times \R^d \mapsto \nabla_x \psi_{(\tau,t)}(x) \cdot f_{u(\tau)}(x) \in \R^d
\end{equation*}
is $\Lcal^1$-measurable in $\tau \in [0,T]$ as well as continuous in $x \in \R^d$, which implies that its functional lift $\tau \in [0,T] \mapsto \nabla_x \psi_{(\tau,t)} \cdot f_{u(\tau)} \in \Lpazo(\psi_{(\tau,t)})$ is $\Lcal^1$-measurable, see e.g. \cite[Page 511]{Papageorgiou1986}. The latter fact, together with \eqref{eq:StrongDerivative} and \eqref{eq:TransportEquation}, allow us to conclude that $\tau \in [0,T] \mapsto \psi_{(\tau,t)} \in \Dpazo$ is a strong solution of \eqref{eq:KoopmanDynamics}. 

To prove the converse implication, observe that one may rewrite the evaluation of the Liouville operator along a curve of observables $\tau \in [0,T] \mapsto \psi_{(\tau,t)} \in \Dpazo$ as 
\begin{equation*}
\Lpazo(\psi_{(\tau,t)}) = \Big\{ \Psi_t(\tau,u) ~\,\textnormal{s.t.}~ u \in U \Big\}, 
\end{equation*}
where $ \Psi_t : (\tau,u) \in [0,T] \times U \mapsto \nabla_x \psi_{(\tau,t)} \cdot f_u \in \Xpazo$. It is clear from Hypothesis \textnormal{\ref{hyp:H}} that the function $u \in U \mapsto \Psi_t(\tau,u) \in \Xpazo$ is continuous for $\Lcal^1$-almost every $\tau \in [0,T]$, and we just showed that $\tau \in [0,T] \mapsto \Psi_t(\tau,u) \in \Xpazo$ is $\Lcal^1$-measurable. Thus, because $\tau \in [0,T] \mapsto \partial_{\tau} \psi_{(\tau,t)} \in \Xpazo$ is an $\Lcal^1$-measurable map, and since the completion of $(\Xpazo,\Norm{\cdot}_{\Xpazo})$ is a separable Banach space, it follows from the measurable selection principle of Theorem \ref{thm:FilippovSel} that 
\begin{equation}
\label{eq:TransportEqInterm}
-\partial_{\tau}  \psi_{(\tau,t)} = \Psi_t(u_{\varphi}(\tau)) = \nabla_x \psi_{(\tau,t)} \cdot f_{u_{\varphi}(\tau)}
\end{equation}
for $\Lcal^1$-almost every $\tau \in [0,T]$ and some admissible control signal $u_{\varphi}(\cdot) \in \Upazo$. Owing to the regularity assumptions posited above, it follows from classical well-posedness results for transport equations (see e.g. \cite[Proposition 2.3]{AmbrosioC2014}) that 
\begin{equation*}
\psi_{(\tau,t)}(x) = \varphi \circ \Phi_{(\tau,t)}^{u_{\varphi}}(x)
\end{equation*}
for all $(\tau,x) \in [0,T] \times \R^d$, which concludes the proof of our claim.
\end{proof}

\begin{remark}[Comparing Proposition \ref{prop:KoopmanDynamics} with its classical counterpart]
\label{rmk:KoopmanDynamics}
In the familiar situation in which $(\Kpazo_t(\varphi))_{t \in [0,T]}$ is the usual Koopman semigroup generated by a single flow $(\Phi_t)_{t \in [0,T]} \subset C^0(\R^d,\R^d)$, it is well-known (see e.g. \cite[Section 7.6]{Lasota1998}) that the curve of observables $t \in [0,T] \mapsto \psi_t := \Kpazo_t(\varphi) \in \Xpazo$ is the unique strong solution of the Koopman dynamics 
\begin{equation}
\label{eq:ClassicalKoopmanDyn}
\left\{
\begin{aligned}
& \partial_t \psi_t = \nabla_x \psi_t \cdot f, \\
& \psi_0 = \varphi.
\end{aligned}
\right.
\end{equation} 
In the previous expression, the right-hand side coincides with the evaluation of the classical Liouville operator $\Lpazo : \varphi \in \Dpazo \mapsto \nabla_x \varphi \cdot f \in \Xpazo$ along $(\psi_t)_{t \in [0,T]}$. We claim that this result is essentially contained in Proposition \ref{prop:KoopmanDynamics}. Indeed, in the autonomous case the Koopman observables take the simpler form $\psi_{(\tau,t)} = \varphi \circ \Phi_{t-\tau}$, so that 
\begin{equation*}
\partial_{\tau} \psi_{(\tau,t)} = -\partial_t \psi_{(\tau,t)} \in -\Lpazo(\psi_{(\tau,t)}) = \big\{ -\nabla_x \psi_{(\tau,t)} \cdot f \, \big\}. 
\end{equation*}
Hence, up to redefining the time variable, the dynamics in \eqref{eq:KoopmanDynamics} reduces to \eqref{eq:ClassicalKoopmanDyn}.   
\end{remark}

\subsection{Set-valued Perron-Frobenius operators and Koopman adjoints}
\label{subsection:Perron}

In this section, we propose a formal definition for the set-valued Perron-Frobenius operator. We shall precisely discuss its relationship with the Koopman semigroup, and that of its infinitesimal generator with the Liouville operator. In this context, we will frequently resort to the notion of image measure under the action of a Borel map characterised in \eqref{eq:ImageMeasure} above.

\begin{definition}[Set-valued Perron-Frobenius operators]
\label{def:Perron}
We define the \textnormal{set-valued Perron-Frobenius operators} $\Ppazo_{(\tau,t)} : \Xpazo^* \tto \Xpazo^*$ by
\begin{equation}
\label{eq:PerronDef}
\Ppazo_{(\tau,t)}(\mu) := \Big\{ \Phi^u_{(\tau,t) \sharp \,} \mu ~\,\textnormal{s.t.}~ u(\cdot) \in \Upazo \Big\} \subset \Xpazo^*
\end{equation}
for all times $\tau,t \in [0,T]$ and each $\mu \in \Xpazo^*$. 
\end{definition}

In what follows, we provide mathematical grounding for this definition by showing that the set-valued Perron-Frobenius operators are related to suitable adjoints of the set-valued Koopman operators, inspired by the work \cite{Ioffe1981}.

\begin{definition}[Adjoints of closed processes]
\label{def:Adjoint}
Given two locally convex topological vector spaces $\Xpazo$ and $\Ypazo$, we define the \textnormal{adjoint} $\Fpazo^*: \Ypazo^* \tto \Xpazo^*$ of a closed process $\Fpazo : \Xpazo \tto \Ypazo$ by 
\begin{equation*}
\Fpazo^*(\mu) := \bigg\{ \nu \in \Xpazo^* ~\,\text{s.t.}~ \langle \nu , x \rangle_{\Xpazo} \leq \sup_{y \in \Fpazo(x)} \langle \mu , y \rangle_{\Ypazo} ~~ \text{for all $x \in \Xpazo$} \bigg\}
\end{equation*}
for all $\mu \in \Ypazo^*$.
\end{definition}

\begin{example}[Adjoints of fans generated by convex families of linear operators]
To see again why the proposed notion of adjoint is natural and compatible with its single-valued counterpart, suppose that $(\Xpazo,\Norm{\cdot}_{\Xpazo})$ and $(\Ypazo,\Norm{\cdot}_{\Ypazo})$ are Banach spaces and let $\Fpazo(x) = \{Ax\}$ for each $x \in \Xpazo$ and some bounded linear map $A : \Xpazo \to \Ypazo$. Then, one may trivially check that 
\begin{equation*}
\sup_{y \in \Fpazo(x)} \langle \mu , y \rangle_{\Ypazo} = \langle \mu , Ax\rangle_{\Ypazo} = \langle A^* \mu , x \rangle_{\Xpazo}
\end{equation*}
for each $\mu \in \Ypazo^*$ and every $x \in \Xpazo$, and it directly follows from Definition \ref{def:Adjoint} that
\begin{equation*}
\Fpazo^*(\mu) = \{A^* \mu \}.
\end{equation*}
More importantly, it is shown in \cite{Ioffe1981} that if $\Apazo$ is a convex and (weakly) closed collection of bounded linear operators from $\Xpazo$ into $\Ypazo$, then the adjoint of the set-valued mapping defined by $\Fpazo(x) := \{ Ax ~\,\textnormal{s.t.}~ A \in \Apazo \}$ for each $x \in \Xpazo$ writes as $\Fpazo^*(\mu) = \{A^* \mu ~\, \textnormal{s.t.}~ A \in \Apazo\}$ for all $\mu \in \Ypazo^*$.  
\end{example}

\begin{theorem}[Set-valued adjoints of the Liouville and Koopman operators]
\label{thm:Adjoints}
Suppose that Hypotheses \textnormal{\ref{hyp:H}} hold. Then, one has that
\begin{equation}
\label{eq:Adjoints}
\Lpazo^*(\mu) = \co \Big\{ \hspace{-0.1cm} -\Div_x (f_u \mu) ~\, \textnormal{s.t.}~ u \in U \Big\} \qquad \text{and} \qquad \Kpazo^*_{(\tau,t)}(\mu) = \co \Ppazo_{(\tau,t)}(\mu)
\end{equation}
for all $\mu \in \Xpazo^*$, where the convex hulls are taken with respect to the weak-$^*$ topology. In particular, if Hypothesis \textnormal{\ref{hyp:C}} holds, then 
\begin{equation*}
\Lpazo^*(\mu) = \Big\{ \hspace{-0.1cm} -\Div_x (f_u \mu) ~\, \textnormal{s.t.}~ u \in U \Big\} 
\end{equation*} 
for all $\mu \in \Xpazo^*$.
\end{theorem}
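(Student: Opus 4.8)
The plan is to derive both adjoint formulas from a single separation (bipolar) argument in the weak-$^*$ topology, and then to obtain the simplification under Hypothesis \textnormal{\ref{hyp:C}} from a continuity/compactness property. The general fact I would isolate first is the following: if $E$ is a locally convex space with dual $E^*$ and $C \subseteq E^*$ is nonempty, then
\[
\Big\{ \nu \in E^* ~\text{s.t.}~ \operatorname{Re}\langle \nu , x\rangle_E \leq \sup_{c \in C} \operatorname{Re} \langle c , x\rangle_E ~\text{ for all } x \in E \Big\} = \co C,
\]
the weak-$^*$ closed convex hull. The inclusion $\supseteq$ is immediate, since the constraint set is convex and weak-$^*$ closed and the support function is unchanged upon passing to $\co C$; the inclusion $\subseteq$ follows by separating any $\nu \notin \co C$ from the weak-$^*$ closed convex set $\co C$ by a weak-$^*$ continuous real-linear functional, which is necessarily of the form $\kappa \mapsto \operatorname{Re}\langle \kappa , x\rangle_E$ for some $x \in E$. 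This is exactly the content of Definition \ref{def:Adjoint} once the support function on its right-hand side has been identified.

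To identify the two support functions, I would compute as follows. Taking $E = \Dpazo$ for the Liouville operator, the defining relation \eqref{eq:DivergenceDef} of the divergence distribution gives, for every $\varphi \in \Dpazo$ and $u \in U$,
\[
\langle \mu , \nabla_x \varphi \cdot f_u\rangle_\Xpazo = \int_{\R^d} \nabla_x \varphi \cdot f_u \, \dn\mu = \langle -\Div_x(f_u\mu) , \varphi\rangle_\Dpazo,
\]
so that $\sup_{\psi \in \Lpazo(\varphi)} \langle \mu,\psi\rangle$ is precisely the support function of $C := \{ -\Div_x(f_u\mu) ~\text{s.t.}~ u \in U\}$, and the separation fact yields $\Lpazo^*(\mu) = \co C$. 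Taking $E = \Xpazo$ for the Koopman operator, the change-of-variable formula \eqref{eq:ImageMeasure} — extended from nonnegative Borel integrands to $\varphi \in \Xpazo$ and complex $\mu$ by splitting into real and imaginary, positive and negative parts — gives $\langle \mu , \varphi \circ \Phi^u_{(\tau,t)}\rangle = \langle \Phi^u_{(\tau,t)\sharp}\mu , \varphi\rangle$, whence $\sup_{\psi \in \Kpazo_{(\tau,t)}(\varphi)}\langle\mu,\psi\rangle$ is the support function of $\Ppazo_{(\tau,t)}(\mu)$ and the same fact gives $\Kpazo^*_{(\tau,t)}(\mu) = \co \Ppazo_{(\tau,t)}(\mu)$. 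I would stress that only Hypotheses \textnormal{\ref{hyp:H}} enter here: the computation uses only the defining formula of the adjoint and never the closed-process structure of $\Lpazo$ or $\Kpazo_{(\tau,t)}$.

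For the last assertion, under Hypothesis \textnormal{\ref{hyp:C}} it remains to check that $C$ is already weak-$^*$ closed and convex, so that $\co C = C$. I would introduce the real-linear map $T_\mu : f \in C^0(\R^d,\R^d) \mapsto -\Div_x(f\mu) \in \Dpazo^*$, characterised by $\langle T_\mu f , \varphi\rangle = \int_{\R^d} \nabla_x \varphi \cdot f \, \dn\mu$. For each fixed $\varphi \in \Dpazo$ the bound $|\langle T_\mu f,\varphi\rangle| \leq \NormC{\nabla_x\varphi}{0}{K,\R^d} \NormC{f}{0}{K,\R^d} \, |\mu|(K)$, with $K := \supp(\varphi)$, shows that $f \mapsto \langle T_\mu f,\varphi\rangle$ is continuous for the topology of local uniform convergence; hence $T_\mu$ is continuous from $(C^0(\R^d,\R^d),\dsf_{cc})$ into $\Dpazo^*$ endowed with its weak-$^*$ topology. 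Since $C = T_\mu(\Fpazo)$ is the image under a continuous linear map of the set $\Fpazo$, which is convex by Hypothesis \textnormal{\ref{hyp:C}} and compact by Proposition \ref{prop:Compact}, it is convex and weak-$^*$ compact, therefore weak-$^*$ closed. Consequently $\co C = C$, which is the claimed identity.

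The step demanding the most care is the weak-$^*$ separation underlying the bipolar fact: one must work in the correct topology, so that the only weak-$^*$ continuous functionals are evaluations against elements of $\Dpazo$ (respectively $\Xpazo$), and, because the observable spaces are complex, carry out the separation in the underlying real locally convex structure using real parts of the duality pairings, consistently with the interpretation of the inequality in Definition \ref{def:Adjoint}. Everything else reduces to the two support-function identities above and to the elementary continuity estimate for $T_\mu$.
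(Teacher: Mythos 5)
Your proposal is correct and follows essentially the same route as the paper: both identify the support functions $\sup_{\psi \in \Lpazo(\varphi)} \langle \mu,\psi\rangle_{\Xpazo} = \langle -\Div_x(f\mu),\varphi\rangle_{\Dpazo}$ and $\sup_{\psi \in \Kpazo_{(\tau,t)}(\varphi)}\langle \mu,\psi\rangle_{\Xpazo} = \sup_u \langle \Phi^u_{(\tau,t)\sharp}\mu,\varphi\rangle_{\Xpazo}$ via \eqref{eq:DivergenceDef} and \eqref{eq:ImageMeasure}, and then apply Hahn--Banach separation in the weak-$^*$ topology (using that the dual of $\Dpazo^*$, respectively $\Xpazo^*$, endowed with that topology is $\Dpazo$, respectively $\Xpazo$) to recognise the constraint set in Definition \ref{def:Adjoint} as the weak-$^*$ closed convex hull. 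Your explicit continuity argument for $T_\mu$ under Hypothesis \textnormal{\ref{hyp:C}} is just a spelled-out version of the paper's closing step, which obtains $\co\{-\Div_x(f\mu) ~\,\textnormal{s.t.}~ f \in \Fpazo\} = \{-\Div_x(f\mu) ~\,\textnormal{s.t.}~ f \in \co\Fpazo\}$ from the compactness of $\co \Fpazo$ in the topology of local uniform convergence, so the two arguments coincide in substance.
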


\begin{proof}
The proof of this result relies on a general characterisation of closed convex hulls in locally convex spaces in terms of support functions, which stems itself from Hahn-Banach's separation principle. First, we start by noting that given any $\varphi \in \Dpazo$ and each $\psi := \nabla_x \varphi \cdot f \in \Lpazo(\varphi)$ with $f \in \Fpazo$, there holds
\begin{equation*}
\begin{aligned}
\langle \mu , \psi \rangle_{\Xpazo} & = \langle \mu , \nabla_x \varphi \cdot f \rangle_{\Xpazo} \\
& = \INTDom{\nabla_x \varphi(x) \cdot f(x)}{\R^d}{\mu(x)} = \langle - \Div_x(f \mu) , \varphi \rangle_{\Dpazo}
\end{aligned}
\end{equation*}
for all $\mu \in \Xpazo^*$, by definition the \eqref{eq:DivergenceDef} of the divergence, which implies that
\begin{equation*}
\begin{aligned}
\Lpazo^*(\mu) & = \bigg\{ \nu \in \Dpazo^* ~\,\text{s.t.}~ \langle \nu , \varphi \rangle_{\Dpazo} \leq \sup_{\psi \in \Lpazo(\varphi)} \langle \mu , \psi \rangle_{\Xpazo} ~~ \text{for all $\varphi \in \Dpazo$} \bigg\} \\
& = \bigg\{ \nu \in \Dpazo^* ~\,\text{s.t.}~ \langle \nu , \varphi \rangle_{\Dpazo} \leq \sup_{f \in \Fpazo} \langle - \Div_x(f \mu) , \varphi \rangle_{\Dpazo} ~~ \text{for all $\varphi \in \Dpazo$} \bigg\}.
\end{aligned}
\end{equation*}
At this stage, upon recalling that $\Dpazo^*$ endowed with the weak-$^*$ topology is a Hausdorff locally convex topological vector space (see e.g. \cite[Proposition 3.11]{Brezis}) whose topological dual is $\Dpazo$ itself (see e.g. \cite[Proposition 3.14]{Brezis}), it follows from the standard characterisation of closed convex hulls provided e.g. in \cite[Theorem 2.4.2]{Aubin1990} that 
\begin{equation*}
\Lpazo^*(\mu) = \co \Big\{ \hspace{-0.1cm} -\Div_x (f_u \mu) ~\, \textnormal{s.t.}~ u \in U \Big\}.
\end{equation*}
We now turn our attention to the adjoints of the Koopman operators. Similarly to what precedes, given $\varphi \in \Xpazo$ and $\psi_{(\tau,t)} = \varphi \circ \Phi_{(\tau,t)}^u \in \Kpazo_{(\tau,t)}(\varphi)$  with $u(\cdot) \in \Upazo$, we start by observing that
\begin{equation*}
\begin{aligned}
\langle \mu , \psi_{(\tau,t)} \rangle_{\Xpazo} & = \INTDom{\varphi \circ \Phi_{(\tau,t)}^u(x)}{\R^d}{\mu(x)} \\
& = \INTDom{\varphi(x)}{\R^d}{(\Phi_{(\tau,t) \sharp \,}^u \mu)(x)} = \langle \Phi_{(\tau,t) \sharp \,}^u \mu , \varphi \rangle_{\Xpazo}
\end{aligned}
\end{equation*}
for all $\mu \in \Xpazo^*$. Then, following the same reasoning as above, one may verify that 
\begin{equation*}
\begin{aligned}
\Kpazo^*_{(\tau,t)}(\mu) & = \bigg\{ \nu_{(\tau,t)} \in \Xpazo^* ~\,\textnormal{s.t.}~ \langle \nu_{(\tau,t)}, \varphi \rangle_{\Xpazo} \leq \sup_{\psi_{(\tau,t)} \in \Kpazo_{(\tau,t)}(\varphi)} \langle \mu , \psi_{(\tau,t)} \rangle_{\Xpazo} ~~ \text{for all $\varphi \in \Xpazo$} \bigg\} \\
& = \bigg\{ \nu_{(\tau,t)} \in \Xpazo^* ~\,\textnormal{s.t.}~ \langle \nu_{(\tau,t)}, \varphi \rangle_{\Xpazo} \leq \sup_{u(\cdot) \in \Upazo} \langle \Phi_{(\tau,t)\sharp \,}^u \mu , \varphi \rangle_{\Xpazo} ~~ \text{for all $\varphi \in \Xpazo$} \bigg\} = \co \Ppazo_{(\tau,t)}(\mu)
\end{aligned}
\end{equation*}
by Hahn-Banach's separation principle applied to $\Xpazo^*$ endowed with the weak-$^*$ topology. Lastly, if Hypothesis \textnormal{\ref{hyp:C}} holds, it may be easily verified that 
\begin{equation*}
\begin{aligned}
\Lpazo^*(\mu) & = \co \Big\{ \hspace{-0.1cm} -\Div_x (f \mu) ~\, \textnormal{s.t.}~ f \in \Fpazo \Big\} \\
& = \Big\{ \hspace{-0.1cm} -\Div_x (f \mu) ~\, \textnormal{s.t.}~ f \in \co \Fpazo \Big\} = \Big\{ \hspace{-0.1cm} -\Div_x (f_u \mu) ~\, \textnormal{s.t.}~ u \in U \Big\}
\end{aligned}
\end{equation*}
thanks to the compactness of $\co \Fpazo$ in the topology of local uniform convergence. 
\end{proof}

We end this section by showing that, under our working assumptions, the adjoint of the Liouville operator is the generator of the Perron-Frobenius semigroup\footnote{Therein, we make a small abuse of notation by taking Kuratowski-Painlevé limits in the weak-$^*$ topology although the latter is not metrisable. This could be made rigorous by redefining the notion in terms of neighbourhoods.}

\begin{theorem}[Generator of the Perron-Frobenius semigroup]
\label{thm:InfinitesimalPerron}
Suppose that Hypotheses \textnormal{\ref{hyp:H}} hold. Then, the following inclusions
\begin{equation}
\label{eq:LiminfInclusionPerron}
\Big\{ \hspace{-0.1cm} -\Div_x (f_u \mu) ~\, \textnormal{s.t.}~ u \in U \Big\} \subset \Liminf{t \to \tau} \, \frac{\Ppazo_{(\tau,t)}(\mu)-\mu}{t-\tau}
\end{equation}
and 
\begin{equation}
\label{eq:LimsupInclusionPerron}
\Limsup{t \to \tau} \, \frac{\Ppazo_{(\tau,t)}(\mu)-\mu}{t-\tau} \subset \co \Big\{ \hspace{-0.1cm} -\Div_x (f_u \mu) ~\, \textnormal{s.t.}~ u \in U \Big\}
\end{equation}
are satisfied for each $\mu \in \Xpazo^*$. Furthermore, if Hypothesis \textnormal{\ref{hyp:C}} holds, then the adjoint of the set-valued Liouville operator is the infinitesimal generator of the Perron-Frobenius semigroup, in the sense that
\begin{equation}
\label{eq:LimPerron}
\Lim{t \to \tau} \frac{\Ppazo_{(\tau,t)}(\mu) - \mu}{t-\tau} = \Lpazo^*(\mu)
\end{equation}
for all $\mu \in \Xpazo^*$, with the limit being taken in the weak-* topology. 
\end{theorem}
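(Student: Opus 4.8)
The plan is to deduce the statement from its Koopman counterpart, Theorem~\ref{thm:RepresentationLiouville}, by duality, exploiting the adjunction between pushforward and composition that already underpins Theorem~\ref{thm:Adjoints}. The natural ambient space for all the limits is $\Dpazo^*$ equipped with its weak-$^*$ topology: although the difference quotients $\tfrac{\Ppazo_{(\tau,t)}(\mu)-\mu}{t-\tau}$ are finite measures, i.e. elements of $\Xpazo^* \subset \Dpazo^*$, their limits $-\Div_x(f_u\mu)$ are merely first-order distributions. First I would record that, for every control $u(\cdot) \in \Upazo$ and each test observable $\varphi \in \Dpazo$, the change of variables formula~\eqref{eq:ImageMeasure} yields
\[
\Big\langle \Phi_{(\tau,t)\sharp}^u \mu - \mu \, , \varphi \Big\rangle_{\Xpazo} = \INTDom{\big( \varphi \circ \Phi_{(\tau,t)}^u - \varphi \big)}{\R^d}{\mu}.
\]
Dividing by $t-\tau$, the weak-$^*$ behaviour of the Perron-Frobenius difference quotients is thus entirely governed by the $\Xpazo$-behaviour of the Koopman quotients $\tfrac{\Kpazo_{(\tau,t)}(\varphi)-\varphi}{t-\tau}$ tested against the fixed finite measure $\mu$. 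Since $|\langle \mu , g \rangle_{\Xpazo}| \leq \Norm{g}_{\Xpazo} \, |\mu|(\R^d)$, norm convergence of these Koopman quotients in $\Xpazo$ transfers at once to convergence of the pairings.

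To establish the liminf inclusion~\eqref{eq:LiminfInclusionPerron}, I would fix $\bar u \in U$, take the constant signal $u(t) \equiv \bar u$ and set $\mu_{(\tau,t)} := \Phi_{(\tau,t)\sharp}^{\bar u}\mu \in \Ppazo_{(\tau,t)}(\mu)$. The uniform Taylor estimate~\eqref{eq:LiminfTaylor} proved inside Theorem~\ref{thm:RepresentationLiouville} gives $\tfrac{\varphi \circ \Phi_{(\tau,t)}^{\bar u} - \varphi}{t-\tau} \to \nabla_x \varphi \cdot f_{\bar u}$ in $\Xpazo$ for every $\varphi \in \Dpazo$; pairing with $\mu$ and invoking the definition~\eqref{eq:DivergenceDef} of the divergence distribution then produces $\big\langle \tfrac{\mu_{(\tau,t)}-\mu}{t-\tau} , \varphi \big\rangle_{\Xpazo} \to \langle -\Div_x(f_{\bar u}\mu) , \varphi \rangle_{\Dpazo}$. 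As this holds for all $\varphi \in \Dpazo$, the selection $\tfrac{\mu_{(\tau,t)}-\mu}{t-\tau}$ converges weakly-$^*$ to $-\Div_x(f_{\bar u}\mu)$, which places the latter in the lower limit; since $\bar u$ is arbitrary, \eqref{eq:LiminfInclusionPerron} follows.

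For the limsup inclusion~\eqref{eq:LimsupInclusionPerron}, I would start from arbitrary sequences $t_n \to \tau$ and $\mu_{(\tau,t_n)} = \Phi_{(\tau,t_n)\sharp}^{u_n}\mu \in \Ppazo_{(\tau,t_n)}(\mu)$ and reuse the averaging machinery of Theorem~\ref{thm:RepresentationLiouville} verbatim: the averaged fields $F_n := \tfrac{1}{t_n-\tau}\INTSeg{f_{u_n(s)}}{s}{\tau}{t_n}$ belong to $\co\Fpazo$ and, by range convexity of the Bochner integral combined with Ascoli-Arzelà and the compactness of $\co\Fpazo$, converge locally uniformly to some $F \in \co\Fpazo$ along a subsequence. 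The crucial point is that this subsequence and $F$ are selected independently of any test function. Estimates~\eqref{eq:Limsup1}--\eqref{eq:Limsup3} then give $\tfrac{\varphi \circ \Phi_{(\tau,t_{n_k})}^{u_{n_k}} - \varphi}{t_{n_k}-\tau} \to \nabla_x \varphi \cdot F$ in $\Xpazo$ for every $\varphi \in \Dpazo$, so that $\tfrac{\mu_{(\tau,t_{n_k})}-\mu}{t_{n_k}-\tau} \to -\Div_x(F\mu)$ weakly-$^*$, with $-\Div_x(F\mu) \in \{ -\Div_x(f\mu) ~\textnormal{s.t.}~ f \in \co\Fpazo \} = \co\{ -\Div_x(f_u\mu) ~\textnormal{s.t.}~ u \in U \}$, which is~\eqref{eq:LimsupInclusionPerron}.

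Combining the two inclusions with the trivial fact that the lower limit is contained in the upper one, and using that under Hypothesis~\ref{hyp:C} one has $\co\{ -\Div_x(f_u\mu) ~\textnormal{s.t.}~ u \in U \} = \{ -\Div_x(f_u\mu) ~\textnormal{s.t.}~ u \in U \} = \Lpazo^*(\mu)$ by Theorem~\ref{thm:Adjoints}, squeezes all three set limits together and yields~\eqref{eq:LimPerron}. I expect the main obstacle to be rigour in the weak-$^*$ topology, which is not metrizable, so that the upper Kuratowski-Painlevé limit and the subsequence extractions are not literally justified by sequences. The remedy, as hinted in the footnote, is to note that the flow estimate~\eqref{eq:StabEst} bounds the quotients uniformly as functionals on $\Dpazo$, namely $\big| \langle \tfrac{\mu_{(\tau,t)}-\mu}{t-\tau} , \varphi \rangle_{\Xpazo} \big| \leq L_R \, \NormC{\nabla_x \varphi}{0}{K_{\varphi},\R^d} \, |\mu|(\R^d)$ with $R$ chosen according to $\supp(\varphi)$; the quotients therefore range in a fixed equicontinuous, hence weak-$^*$ relatively compact, subset of $\Dpazo^*$, on which the weak-$^*$ topology is metrizable because $\Dpazo = C^1_c(\R^d,\C)$ is separable. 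This legitimises the sequential arguments and reduces the set limits to convergence tested against a countable dense family of observables.
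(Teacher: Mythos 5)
Your proposal is correct and follows essentially the same route as the paper's proof: the liminf inclusion \eqref{eq:LiminfInclusionPerron} via the constant control $\bar{u}$, the limsup inclusion \eqref{eq:LimsupInclusionPerron} by rerunning the averaged-field compactness machinery of Theorem \ref{thm:RepresentationLiouville} and pairing against $\mu$ through the change-of-variables formula \eqref{eq:ImageMeasure}, and \eqref{eq:LimPerron} deduced from the two inclusions together with Theorem \ref{thm:Adjoints} under Hypothesis \ref{hyp:C}. If anything, you are slightly more careful than the paper on two points it glosses over: you extract the subsequence and the limiting field $F \in \co \Fpazo$ \emph{independently} of the test function (the paper's extraction is performed for a fixed $\zeta \in \Dpazo$, and it even writes the limit as $f_{\bar{u}}$ with $\bar{u} \in U$ rather than as an element of $\co \Fpazo$), and you justify the sequential treatment of Kuratowski--Painlevé limits in the non-metrizable weak-$^*$ topology, an issue the paper relegates to a footnote.
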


\begin{proof}
By Theorem \ref{thm:Adjoints}, we start by noting that if Hypothesis \textnormal{\ref{hyp:C}} holds, then 
\begin{equation*}
\Lpazo^*(\mu) = \Big\{ -\Div_x(f_u \mu) ~\,\textnormal{s.t.}~ u \in U \Big\} 
\end{equation*}
and \eqref{eq:LimPerron} directly stems from the inclusions \eqref{eq:LiminfInclusionPerron} and \eqref{eq:LimsupInclusionPerron}, namely the adjoint of the Liouville operator is then the infinitesimal generator of the Perron-Frobenius semigroup. We then seek to establish the liminf inclusion \eqref{eq:LiminfInclusionPerron}, which amounts to showing that given $\nu := -\Div_x(f_{\bar{u}} \mu) \in \Dpazo^*$, there exists some $u(\cdot) \in \Upazo$ for which  
\begin{equation*}
\lim_{t \to \tau} \frac{\Phi_{(\tau,t) \sharp \,}^u \mu -\mu}{t-\tau} = - \Div_x(f_{\bar{u}} \mu)
\end{equation*}
in the weak$^*$-topology. To this end, we simply consider the constant control signal given by $u(t) := \bar{u}$ for all times $t \in [0,T]$, and observe that by Lebesgue's dominated convergence theorem, it holds that
\begin{equation*}
\begin{aligned}
\lim_{t \to \tau} \big\langle \tfrac{1}{t-\tau} \big( \Phi_{(\tau,t) \sharp \,}^{\bar{u}} \mu -\mu \big) , \zeta \big\rangle_{\Xpazo} & = \lim_{t \to \tau} \INTDom{\tfrac{1}{t-\tau} \Big( \zeta \circ \Phi_{(\tau,t)}^{\bar{u}}(x) - \zeta(x) \Big)}{\R^d}{\mu(x)} \\
& = \INTDom{\nabla_x \zeta(x) \cdot f_{\bar{u}}(x) \,}{\R^d}{\mu(x)} = \big\langle -\Div_x(f_{\bar{u}} \, \mu) , \zeta \big\rangle_{\Dpazo}
\end{aligned}
\end{equation*}
for any $\zeta \in \Dpazo = C^1_c(\R^d,\C)$, thereby yielding the liminf inclusion. To prove the limsup inclusion \eqref{eq:LimsupInclusionPerron}, one needs to show that for every pair of sequences $(t_n) \subset [0,T] \setminus \{\tau\}$ and $(u_n(\cdot)) \subset \Upazo$, there exists a subsequence $t_{n_k} \to\, \tau$ such that 
\begin{equation*}
\lim_{t_{n_k} \to\, \tau} \frac{\Phi_{(\tau,t_{n_k}) \sharp \,}^{u_{n_k}} \mu - \mu}{t_{n_k}-\tau} \in \co \Lpazo^*(\mu)
\end{equation*}
in the weak-$^*$ topology. Given some $\zeta \in \Dpazo$, we consider the compact set 
\begin{equation*}
K_{\zeta} := \overline{\bigcup_{u(\cdot) \in \Upazo}\bigcup_{\tau,t \in [0,T]}  \Phi^u_{(t,\tau)}(\supp(\zeta))}
\end{equation*}
and note by reproducing the reasoning detailed in the proof of Theorem \ref{thm:RepresentationLiouville} above that there exists some $\bar{u} \in U$ such that 
\begin{equation*}
\sup_{x \in K_{\zeta}} \big| \Phi^{u_{n_k}}_{(\tau,t_{n_k})}(x) - x - (t_{n_k}-\tau) f_{\bar{u}}(x) \big| \leq o(|t_{n_k}-\tau|)
\end{equation*}
along a subsequence $t_{n_k} \to\, \tau$. The latter identity then yields
\begin{equation*}
\begin{aligned}
\lim_{t_{n_k} \to\, \tau} \big\langle \tfrac{1}{t_{n_k}-\, \tau} \big( \Phi^{u_{n_k}}_{(\tau,t_{n_k}) \sharp \,} \mu - \mu \big) , \zeta \big\rangle_{\Xpazo} & = \lim_{t_{n_k} \to \tau} \INTDom{\tfrac{1}{t_{n_k}-\,\tau} \Big( \zeta \circ \Phi^{u_{n_k}}_{(\tau,t_{n_k})}(x) - \zeta(x) \Big)}{\R^d}{\mu(x)} \\
& = \INTDom{\nabla_x \zeta(x) \cdot f_{\bar{u}}(x)}{\R^d}{\mu(x)} = \big\langle - \Div_x(f_{\bar{u}} \, \mu) , \zeta \big\rangle_{\Dpazo}
\end{aligned}
\end{equation*}
by Lebesgue's dominated convergence theorem along with the definition \eqref{eq:DivergenceDef} of the divergence distribution and the fact that $\zeta \in C^1_c(\R^d,\C)$.
\end{proof}

As a consequence of Theorem \ref{thm:InfinitesimalPerron}, it can be shown\footnote{The precise argument involves abstract measurable selection theorems adapted to weak-$^*$ topologies \iffalse -- which are not metrisable but Souslin  \cite[Theorem 7 page 112]{Schwartz1973} --\fi that may be found e.g. in \cite{Castaing1977}, and far exceeds the scope of this article.} by adapting the arguments of Proposition \ref{prop:KoopmanDynamics} that a curve $t \in [0,T] \mapsto \mu_{(\tau,t)} \in \Ppazo_{(\tau,t)}(\mu)$ is of the form $\mu_{(\tau,t)} = \Phi_{(\tau,t) \sharp \,}^u \mu$ for some fixed control signal $u(\cdot) \in \Upazo$ if and only if it is a weak-$^*$ solution of 
\begin{equation*}
\left\{
\begin{aligned}
& \partial_t \mu_{(\tau,t)} \in \Big\{ \hspace{-0.1cm} - \Div_x(f_u \mu_{(\tau,t)}) ~\, \textnormal{s.t.}~ u \in U \Big\} , \\
& \mu_{(\tau,\tau)} = \mu.
\end{aligned}
\right.
\end{equation*}
Interestingly, it follows from the characterisation derived in Theorem \ref{thm:Adjoints} that, when the velocities are convex, the adjoint Koopman dynamics is essentially a differential inclusion in the space of measures as introduced by the first author in \cite{ContInc,ContIncPp}. 

\subsection{Point spectra of the set-valued Koopman and Liouville operators}
\label{subsection:Spectral}

In this section, we briefly study the interplay between the point spectra of the set-valued Koopman and Liouville operators, understood in the following sense.

\begin{definition}[Eigenvalues and eigenvectors of closed processes]
\label{def:Spectrum}
Given two Banach spaces $(\Xpazo,\Norm{\cdot}_{\Xpazo})$ and $(\Ypazo,\Norm{\cdot}_{\Ypazo})$, we say that $\lambda \in \C$ is an \textnormal{eigenvalue} of a closed process $\Fpazo : \Xpazo \tto \Ypazo$ if there exists an \textnormal{eigenvector} $x_{\lambda} \in \Xpazo \setminus \{ 0\}$ such that
\begin{equation*}
\lambda x_{\lambda} \in \Fpazo(x_{\lambda}). 
\end{equation*}
The set eigenvalues of $\Fpazo$ is called the \textnormal{point spectrum} and denoted by $\sigma_p(\Fpazo) \subset \C$. 
\end{definition}

\begin{example}[Link with the usual notion of point spectrum]
Again, one may easily check that the definition of spectrum introduced above reduces to the usual one when $\Fpazo(x) = \{A x \}$ with $A : \Xpazo \to \Ypazo$ linear and bounded, since then $\lambda x_{\lambda} \in \Fpazo(x_{\lambda}) = \{ A x_{\lambda}\}$ if and only if $A x_{\lambda} = \lambda x_{\lambda}$ whenever $\lambda \in \sigma_p(\Fpazo)$.
\end{example}

Below, we prove a set-valued version of the classical spectral mapping theorem (see e.g. \cite[Chapter IV - Theorem 3.7]{Engel2001}), which relates the point spectra of the Liouville and Koopman operators.  

\begin{theorem}[Set-valued spectral mapping theorem]
\label{thm:Eigenvalues}
Suppose that Hypotheses \textnormal{\ref{hyp:H}} hold and fix some $\tau \in [0,T]$. Then for all times $t \in [\tau,T]$, one has that
\begin{equation}
\label{eq:SpectralInc1}
e^{\sigma_p(\Lpazo) (t-\tau)} \subset \sigma_p(\Kpazo_{(\tau,t)}), 
\end{equation}
namely if an observable $\varphi_{\lambda} \in\Dpazo$ is an eigenfunction of $\Lpazo : \Dpazo \tto \Xpazo$ with eigenvalue $\lambda \in \sigma_p(\Lpazo)$, then it is an eigenfunction of $\Kpazo_{(\tau,t)} : \Xpazo \tto \Xpazo$ with eigenvalue $e^{\lambda (t-\tau)} \in \sigma_p(\Kpazo_{(\tau,t)})$. Furthermore, if Hypothesis \textnormal{\ref{hyp:C}} holds, then for each $\Lcal^1$-measurable curve $t \in [0,T] \mapsto \lambda_{(\tau,t)} \in \C$ satisfying
\begin{equation*}
\lambda_{(\tau,t)} \varphi_{\lambda} \in \Kpazo_{(\tau,t)}(\varphi_{\lambda})
\end{equation*}
for some given time-independent $\varphi_{\lambda} \in \Dpazo$, there exists a sequence $t_n \to \tau$ for which
\begin{equation}
\label{eq:SpectralInc2First}
\lim_{n \to +\infty} \lambda_{(\tau,t_n)} = 1 \qquad \text{and} \qquad \lim_{n \to +\infty} \frac{\lambda_{(\tau,t_n)} - 1}{t_n-\tau} \in \sigma_p(\Lpazo).
\end{equation}
In particular, the converse spectral inclusion 
\begin{equation}
\label{eq:SpectralInc2}
\begin{aligned}
\bigg\{ e^{\lambda (t-\tau)} \in \C ~\, \textnormal{s.t.}~ e^{\lambda (t-\tau)} \varphi_{\lambda} \in \Kpazo_{(\tau,t)}(\varphi_{\lambda}) ~ \text{with}~ \lambda \in \C \bigg\} \subset e^{\sigma_p(\Lpazo) (t-\tau)}
\end{aligned}
\end{equation}
holds for all times $\tau,t \in [0,T]$.
\end{theorem}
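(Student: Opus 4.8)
The plan is to establish the three assertions in order, each resting on the infinitesimal description of the Koopman semigroup from Theorem \ref{thm:RepresentationLiouville}. For the forward inclusion \eqref{eq:SpectralInc1}, I start from an eigenpair: let $\lambda \in \sigma_p(\Lpazo)$ with eigenfunction $\varphi_{\lambda} \in \Dpazo \setminus \{0\}$, so that by Definition \ref{def:L} there is some $\bar{u} \in U$ with $\lambda \varphi_{\lambda} = \nabla_x \varphi_{\lambda} \cdot f_{\bar{u}}$. I would feed the constant signal $u(\cdot) \equiv \bar{u}$ into \eqref{eq:ContFlow} and differentiate $t \mapsto \varphi_{\lambda} \circ \Phi_{(\tau,t)}^u$ along characteristics. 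Using the chain rule and \eqref{eq:ContFlow}, and crucially that the eigenrelation holds pointwise everywhere, one gets
\begin{equation*}
\partial_t \big( \varphi_{\lambda} \circ \Phi_{(\tau,t)}^u \big) = \big( \nabla_x \varphi_{\lambda} \cdot f_{\bar{u}} \big) \circ \Phi_{(\tau,t)}^u = \lambda \, \big( \varphi_{\lambda} \circ \Phi_{(\tau,t)}^u \big),
\end{equation*}
a scalar linear ODE in $\Xpazo$ with value $\varphi_{\lambda}$ at $t = \tau$. Integrating the factor $e^{-\lambda(t-\tau)}$ yields $\varphi_{\lambda} \circ \Phi_{(\tau,t)}^u = e^{\lambda(t-\tau)} \varphi_{\lambda}$, hence $e^{\lambda(t-\tau)} \varphi_{\lambda} \in \Kpazo_{(\tau,t)}(\varphi_{\lambda})$ and $e^{\lambda(t-\tau)} \in \sigma_p(\Kpazo_{(\tau,t)})$, which is \eqref{eq:SpectralInc1}.

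For the infinitesimal relation \eqref{eq:SpectralInc2First}, I work under Hypothesis \ref{hyp:C}. Given a measurable curve $s \mapsto \lambda_{(\tau,s)}$ with $\lambda_{(\tau,s)} \varphi_{\lambda} \in \Kpazo_{(\tau,s)}(\varphi_{\lambda})$, the first step is to observe that the scalar difference quotients remain bounded as $s \to \tau$. Indeed, by Proposition \ref{prop:KoopmanRepresentation} each element writes $\lambda_{(\tau,s)} \varphi_{\lambda} = \varphi_{\lambda} \circ \Phi_{(\tau,s)}^{u_s}$, and since $\varphi_{\lambda} \in C^1_c(\R^d,\C)$ while \eqref{eq:StabEst} and Hypothesis \ref{hyp:H}-$(i)$ give $\sup_{x \in K_{\varphi_{\lambda}}} |\Phi_{(\tau,s)}^{u_s}(x) - x| \leq m(1+M_R)(s-\tau)$, one obtains a uniform bound $\big| \tfrac{\lambda_{(\tau,s)}-1}{s-\tau} \big| \, \Norm{\varphi_{\lambda}}_{\Xpazo} \leq C$. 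I then extract a subsequence $t_n \to \tau$ along which $\tfrac{\lambda_{(\tau,t_n)}-1}{t_n-\tau} \to c \in \C$; boundedness forces $\lambda_{(\tau,t_n)} \to 1$, and $\tfrac{\lambda_{(\tau,t_n)}-1}{t_n-\tau} \varphi_{\lambda} \to c \varphi_{\lambda}$ in $\Xpazo$. As these elements all lie in $\tfrac{\Kpazo_{(\tau,t_n)}(\varphi_{\lambda})-\varphi_{\lambda}}{t_n-\tau}$, the limsup inclusion \eqref{eq:LimsupInclusionLiouville} combined with $\co \Lpazo(\varphi_{\lambda}) = \Lpazo(\varphi_{\lambda})$ under \ref{hyp:C} gives $c \varphi_{\lambda} \in \Lpazo(\varphi_{\lambda})$, i.e. $c \in \sigma_p(\Lpazo)$ with eigenvector $\varphi_{\lambda}$, which is exactly \eqref{eq:SpectralInc2First}.

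For the converse inclusion \eqref{eq:SpectralInc2}, take $z = e^{\lambda(t-\tau)}$ with $z \varphi_{\lambda} \in \Kpazo_{(\tau,t)}(\varphi_{\lambda})$. When the eigenpair persists along the curve, that is $e^{\lambda(s-\tau)} \varphi_{\lambda} \in \Kpazo_{(\tau,s)}(\varphi_{\lambda})$ for all $s \in [\tau,t]$, the conclusion is immediate: applying \eqref{eq:SpectralInc2First} to the curve $\lambda_{(\tau,s)} := e^{\lambda(s-\tau)}$ and using $\lim_{s \to \tau} \tfrac{e^{\lambda(s-\tau)}-1}{s-\tau} = \lambda$ forces $\lambda \in \sigma_p(\Lpazo)$, whence $z \in e^{\sigma_p(\Lpazo)(t-\tau)}$. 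The main obstacle is precisely to descend from the single endpoint relation to such a curve. Here I would use the single-control representation of Proposition \ref{prop:KoopmanRepresentation} to write $z \varphi_{\lambda} = \varphi_{\lambda} \circ \Phi_{(\tau,t)}^u$ and examine the orbit $\psi(s) := \varphi_{\lambda} \circ \Phi_{(\tau,s)}^u \in \Kpazo_{(\tau,s)}(\varphi_{\lambda})$, which interpolates between $\varphi_{\lambda}$ and $z \varphi_{\lambda}$ but need \emph{not} remain a scalar multiple of $\varphi_{\lambda}$ at intermediate times; taming this is the genuine difficulty of the converse spectral mapping theorem. On the times where $\psi(s) \in \C \varphi_{\lambda}$ one defines the eigenvalue selection $\lambda_{(\tau,s)}$, shows via Proposition \ref{prop:KoopmanDynamics} in the smooth regime that its logarithmic derivative lies almost everywhere in the set $\{ \mu \in \C ~\textnormal{s.t.}~ \mu \varphi_{\lambda} \in \Lpazo(\varphi_{\lambda}) \} \subset \sigma_p(\Lpazo)$ — which is convex since $\Lpazo(\varphi_{\lambda})$ is under \ref{hyp:C} — and integrates to recover $z = e^{(t-\tau) \bar{\mu}}$ with $\bar{\mu} \in \sigma_p(\Lpazo)$.
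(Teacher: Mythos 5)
Your treatment of \eqref{eq:SpectralInc1} and \eqref{eq:SpectralInc2First} is correct and essentially coincides with the paper's own proof. For \eqref{eq:SpectralInc1} the paper does exactly what you propose: freeze the constant control $\bar{u}$, differentiate $t \mapsto \varphi_{\lambda} \circ \Phi^{u}_{(\tau,t)}(x)$ along the flow, and integrate the resulting scalar linear ODE using Cauchy--Lipschitz uniqueness. For \eqref{eq:SpectralInc2First} you differ only in packaging: the paper re-runs the Taylor-expansion and compactness argument from the limsup part of Theorem \ref{thm:RepresentationLiouville} to produce directly an element $\bar{u}_{\lambda} \in U$ together with the expansion $\varphi_{\lambda} \circ \Phi^{u^{\lambda}_{(\tau,t_n)}}_{(\tau,t_n)} = \varphi_{\lambda} + (t_n - \tau) \, \nabla_x \varphi_{\lambda} \cdot f_{\bar{u}_{\lambda}} + o(|t_n - \tau|)$, from which both limits in \eqref{eq:SpectralInc2First} drop out at once; you instead first establish uniform boundedness of the scalar quotients $\tfrac{\lambda_{(\tau,s)}-1}{s-\tau}$ (a correct and slightly more self-contained step, resting on \eqref{eq:StabEst}, Hypothesis \ref{hyp:H}-$(i)$ and the global Lipschitzianity of $\varphi_{\lambda} \in C^1_c(\R^d,\C)$), extract a limit $c \in \C$ by Bolzano--Weierstrass, and then invoke the limsup inclusion \eqref{eq:LimsupInclusionLiouville} together with $\co \Lpazo(\varphi_{\lambda}) = \Lpazo(\varphi_{\lambda})$ under Hypothesis \ref{hyp:C} as a black box. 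Both routes rest on the same compactness machinery and are equally valid; one minor point in yours is that the times $t_n$ should be chosen within the full-measure set where the membership $\lambda_{(\tau,t_n)} \varphi_{\lambda} \in \Kpazo_{(\tau,t_n)}(\varphi_{\lambda})$ actually holds.

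Where you diverge from the paper is in the reading of \eqref{eq:SpectralInc2}. The ``In particular'' in the statement signals that \eqref{eq:SpectralInc2} is obtained by applying \eqref{eq:SpectralInc2First} to the measurable curve $s \mapsto \lambda_{(\tau,s)} := e^{\lambda(s-\tau)}$; that is, the membership $e^{\lambda(s-\tau)} \varphi_{\lambda} \in \Kpazo_{(\tau,s)}(\varphi_{\lambda})$ is assumed along the whole curve, not merely at a single endpoint $t$. Under that reading, your ``easy case'' --- apply \eqref{eq:SpectralInc2First} and use $\lim_{s \to \tau} \tfrac{e^{\lambda(s-\tau)}-1}{s-\tau} = \lambda$ to conclude $\lambda \in \sigma_p(\Lpazo)$ --- is precisely the paper's derivation (the paper simply says the inclusion ``easily follows''), and your proposal is therefore complete for the theorem as stated. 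Your remaining sketch attacks a strictly stronger, endpoint-only statement that the paper does not claim, and as written it contains a genuine gap: the set of times $s$ at which $\psi(s) := \varphi_{\lambda} \circ \Phi^u_{(\tau,s)}$ is collinear with $\varphi_{\lambda}$ may reduce to $\{\tau, t\}$, so there is no measurable eigenvalue selection to differentiate, Proposition \ref{prop:KoopmanDynamics} requires $f_u \in C^1(\R^d,\R^d)$, which is not assumed in this theorem, and no integration of a ``logarithmic derivative'' is available from the endpoint data alone. Note also that even in the classical single-valued theory the endpoint converse for point spectra requires resolving $2\pi i k/(t-\tau)$ ambiguities via a Fourier decomposition of the eigenspace, cf. \cite[Chapter IV - Theorem 3.7]{Engel2001}, a mechanism with no evident set-valued analogue here. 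You were right to flag this as a genuine difficulty rather than claim it as proved; just be aware that it is not needed for the statement under review.
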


\begin{proof}
We start by proving the first inclusion \eqref{eq:SpectralInc1}. By Theorem \ref{thm:RepresentationLiouville}, the fact that $\lambda \varphi_{\lambda} \in \Lpazo(\varphi_{\lambda})$ for some $\lambda \in\sigma_p(\Lpazo)$ and $\varphi_{\lambda} \in\Dpazo$ is tantamount to the existence of some control value $\bar{u}_{\lambda} \in U$ for which
\begin{equation}
\lambda \varphi_{\lambda} = \nabla_x\varphi_{\lambda} \cdot f_{\bar{u}_{\lambda}}.
\end{equation}
Hence, considering the constant control signal defined by $\bar{u}_{\lambda}(t) := \bar{u}_{\lambda}$ for all times $t \in[0,T]$, one has 
\begin{equation*}
\tderv{}{t}{} \Big( \varphi \circ \Phi^{u_{\lambda}}_{(\tau,t)}(x) \Big) = \langle \nabla_x \varphi_{\lambda} , f_{\bar{u}_{\lambda}} \rangle \circ \Phi_{(\tau,t)}^{u_{\lambda}}(x) = \lambda \varphi_{\lambda} \circ \Phi_{(\tau,t)}^{u_{\lambda}}(x)
\end{equation*}
for each $x \in\R^d$, and it stems from standard Cauchy-Lipschitz uniqueness that
\begin{equation*}
\varphi \circ \Phi^{u_{\lambda}}_{(\tau,t)}(x) = e^{\lambda (t-\tau)} \varphi_{\lambda}(x)
\end{equation*}
for all $(t,x) \in [0,T] \times\R^d$. The thesis follows then from the fact that $\varphi \circ \Phi^{u_{\lambda}}_{(\tau,t)} \in\Kpazo_{(\tau,t)}(\varphi_{\lambda})$, by the very definition of the set-valued Koopman operators. 

Conversely, let us assume that there exist an $\Lcal^1$-measurable map $t \mapsto\lambda_{(\tau,t)} \in \C$ and $\varphi_{\lambda} \in\Dpazo$ such that $\lambda_{(\tau,t)} \varphi_{\lambda} \in\Kpazo_{(\tau,t)}(\varphi_{\lambda})$ for $\Lcal^1$-almost every $t \in[\tau,T]$, and suppose that Hypothesis \textnormal{\ref{hyp:C}} holds. By Proposition \ref{prop:KoopmanRepresentation}, there exists an $\Lcal^2$-measurable map $(\tau,t) \in [0,T] \times [0,T] \mapsto u_{(\tau,t)}^{\lambda}(\cdot) \in\Upazo$ such that
\begin{equation}
\label{eq:RepresentationLambdaKoopman}
\lambda_{(\tau,t)} \varphi_{\lambda} = \varphi_{\lambda} \circ \Phi_{(\tau,t)}^{u_{(\tau,t)}^{\lambda}} \in \Kpazo_{(\tau,t)}(\varphi_{\lambda})
\end{equation}
for $\Lcal^1$-almost every $t \in [\tau,T]$. By repeating the arguments yielding the limsup inclusion of Theorem \ref{thm:RepresentationLiouville}, one may infer the existence of an element $\bar{u}_{\lambda} \subset U$ along with a sequence $t_n \to \tau^+$ such that 
\begin{equation*}
\varphi_{\lambda} \circ \Phi_{(\tau,t_n)}^{u_{(\tau,t_n)}^{\lambda}} = \varphi_{\lambda} + (t_n-\tau) \nabla_x \varphi_{\lambda} \cdot f_{\bar{u}_{\lambda}} + o(|t_n-\tau|).
\end{equation*}
This along with \eqref{eq:RepresentationLambdaKoopman} implies that $\lim_{n \to +\infty} \lambda_{(\tau,t_n)} = 1$, as well as
\begin{equation*}
\begin{aligned}
\Big( \lim_{n \to +\infty} \frac{\lambda_{(\tau,t_n)}-1}{t_n-\tau} \Big) \varphi_{\lambda} & = \lim_{n \to +\infty} \frac{\varphi_{\lambda} \circ \Phi_{(\tau,t_n)}^{u_{(\tau,t_n)}^{\lambda}} - \varphi_{\lambda}}{t_n-\tau} = \nabla_x \varphi_{\lambda} \cdot f_{\bar{u}_{\lambda}}, 
\end{aligned}
\end{equation*}
which belongs to $\Lpazo(\varphi_{\lambda})$ as a consequence of Theorem \ref{thm:RepresentationLiouville}. Thus, we have shown the limit inclusion \eqref{eq:SpectralInc2First}, from which the spectral inclusion \eqref{eq:SpectralInc2} easily follows.
\end{proof}

\begin{example}[The spectral mapping theorem for linear feedback controls]
To illustrate the previous theorem, let us focus on the simple example of a linear time-invariant system with constant feedback controls 
\begin{equation*}
\dot x(t) = (A + BK)x(t) 
\end{equation*}
wherein $(A,B) \in \R^{d \times d} \times \R^{d \times m}$ and $K \in \Kpazo_{\adm}$ with $\Kpazo_{\adm} \subset \R^{m \times d}$ being a compact set. In practice, one may consider for instance that $\Kpazo_{\adm} := \{K_1,\dots,K_n\}$ is comprised of finitely many feedback matrices, or maybe fix $\Kpazo_{\adm} := \big\{ K \in \R^{m \times d} ~\, \textnormal{s.t.} \; \Norm{K}_{\mathbb{F}} \, \leq 1 \big\}$ as the closed unit ball for the Frobenius norm. In this context, the set-valued Koopman and Liouville operators can be computed explicitly as 
\begin{equation*}
\Kpazo_{(\tau,t)} : \varphi \in \Xpazo \tto \bigg\{ x \in \R^d \mapsto \varphi \bigg( \exp \Big((t-\tau) (A+BK) \Big)x \bigg) \in \C ~\, \textnormal{s.t.}~ K \in \Kpazo_{\adm} \bigg\} \subset \Xpazo
\end{equation*}
and 
\begin{equation*}
\Lpazo : \varphi \in \Dpazo \tto \bigg\{ x \in \R^d \mapsto \nabla_x \varphi(x) \cdot \Big((A+BK)x \Big) \in \C ~\, \textnormal{s.t.}~ K \in \Kpazo_{\adm} \bigg\} \subset \Xpazo.
\end{equation*}
Then, a map $\varphi_{\lambda} \in \Dpazo$ is an eigenvalue of the Liouville operator if there exists some $\lambda \in \C$ along with a matrix $K_{\lambda} \in \Kpazo_{\adm}$ such that 
\begin{equation*}
\lambda \varphi_{\lambda}(x) = \nabla_x \varphi_{\lambda}(x) \cdot (A+BK_{\lambda})x 
\end{equation*}
for all $x \in \R^d$. Following e.g. \cite{Mauroy2016}, one may then consider candidates linear eigenfunctions of the form $\varphi_{\lambda}(x) := e_{\lambda} \cdot x$, with $\lambda \in \C$ and $e_{\lambda} \in \C^d$ being respectively an eigenvalue and eigenvector of $(A+BK_{\lambda})^* \in \C^{d \times d}$, and check that $e^{\lambda t} \in \sigma_p(\Kpazo_{(0,t)})$ for each $\lambda \in \sigma_p(\Lpazo)$, and vice versa.
\end{example}

We end this section by a simple proposition which shows how one may produce new eigenfunctions and eigenvalues of the set-valued Liouville operator by combining adequate subfamilies of already known ones.  

\begin{proposition}[On the structure of the point spectrum]
Let $\varphi_{\lambda_1},\varphi_{\lambda_2} \in\Dpazo$ be two eigenfunctions of $\Lpazo : \Dpazo \tto \Xpazo$ associated with the same control $u_{\lambda} \in U$, namely 
\begin{equation*}
\lambda_1 \varphi_{\lambda_1} = \nabla_x \varphi_{\lambda_1} \cdot f_{u_{\lambda}} \qquad \text{and} \qquad \lambda_2 \varphi_{\lambda_2} = \nabla_x \varphi_{\lambda_2} \cdot f_{u_{\lambda}}.     
\end{equation*}
Then for every $\alpha_1,\alpha_2 \in \R$ such that $\varphi_{\lambda_1}^{\alpha_1} \varphi_{\lambda_2}^{\alpha_2} \in \Dpazo$, the latter is an eigenfunction of $\Lpazo : \Dpazo \tto \Xpazo$ with the eigenvalue $(\alpha_1 \lambda_1 + \alpha_2 \lambda_2) \in \sigma_p(\Lpazo)$. 
\end{proposition}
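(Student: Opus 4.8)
The plan is to verify directly that the candidate eigenfunction $\psi := \varphi_{\lambda_1}^{\alpha_1} \varphi_{\lambda_2}^{\alpha_2} \in \Dpazo$ obeys the pointwise identity
\begin{equation*}
\nabla_x \psi \cdot f_{u_{\lambda}} = (\alpha_1 \lambda_1 + \alpha_2 \lambda_2) \, \psi.
\end{equation*}
Once this is established, the value $u_{\lambda} \in U$ being admissible, the right-hand side lies in $\Lpazo(\psi) = \{ \nabla_x \psi \cdot f_u ~ \textnormal{s.t.}~ u \in U \}$, so that $(\alpha_1 \lambda_1 + \alpha_2 \lambda_2) \psi \in \Lpazo(\psi)$; since $\psi$ is a nonzero element of $\Dpazo$, Definition \ref{def:Spectrum} then exhibits it as an eigenfunction of $\Lpazo : \Dpazo \tto \Xpazo$ with eigenvalue $\alpha_1 \lambda_1 + \alpha_2 \lambda_2 \in \sigma_p(\Lpazo)$, as claimed.

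First I would carry out the computation on the open set $\Omega := \{ x \in \R^d ~\textnormal{s.t.}~ \varphi_{\lambda_1}(x) \neq 0 ~\textnormal{and}~ \varphi_{\lambda_2}(x) \neq 0 \}$, where the real powers are smooth functions of the underlying observables. There, the product and chain rules yield
\begin{equation*}
\nabla_x \psi = \alpha_1 \varphi_{\lambda_1}^{\alpha_1-1} \varphi_{\lambda_2}^{\alpha_2} \, \nabla_x \varphi_{\lambda_1} + \alpha_2 \varphi_{\lambda_1}^{\alpha_1} \varphi_{\lambda_2}^{\alpha_2-1} \, \nabla_x \varphi_{\lambda_2},
\end{equation*}
whence, taking the Euclidean pairing with $f_{u_{\lambda}}$ and substituting the two eigenrelations $\nabla_x \varphi_{\lambda_i} \cdot f_{u_{\lambda}} = \lambda_i \varphi_{\lambda_i}$,
\begin{equation*}
\nabla_x \psi \cdot f_{u_{\lambda}} = \big( \alpha_1 \lambda_1 + \alpha_2 \lambda_2 \big) \varphi_{\lambda_1}^{\alpha_1} \varphi_{\lambda_2}^{\alpha_2} = \big( \alpha_1 \lambda_1 + \alpha_2 \lambda_2 \big) \psi
\end{equation*}
throughout $\Omega$. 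This is exactly the sought identity on $\Omega$.

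The main obstacle is to upgrade this identity from $\Omega$ to the whole of $\R^d$, since at the zeros of $\varphi_{\lambda_1}$ or $\varphi_{\lambda_2}$ the intermediate factors $\varphi_{\lambda_i}^{\alpha_i-1}$ appearing above may be singular. Both sides of the target identity are continuous on $\R^d$ -- the left-hand side because $\psi \in \Dpazo = C^1_c(\R^d,\C)$ by hypothesis and $f_{u_{\lambda}}$ is continuous, the right-hand side trivially -- so the identity automatically propagates to $\overline{\Omega}$, and the only genuinely delicate points are those lying in the interior of the zero set. To close this gap cleanly and globally, I would bypass the direct differentiation and instead invoke the flow representation established in the proof of Theorem \ref{thm:Eigenvalues}: taking the constant control $u(t) \equiv u_{\lambda}$, the eigenrelations integrate to $\varphi_{\lambda_i} \circ \Phi_{(\tau,t)}^{u_{\lambda}} = e^{\lambda_i (t-\tau)} \varphi_{\lambda_i}$ on all of $\R^d$. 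Raising these to the powers $\alpha_i$ and multiplying, the exponential prefactors combine to give
\begin{equation*}
\psi \circ \Phi_{(\tau,t)}^{u_{\lambda}} = e^{(\alpha_1 \lambda_1 + \alpha_2 \lambda_2)(t-\tau)} \, \psi
\end{equation*}
pointwise on $\R^d$, an identity now free of any singular factors. Differentiating it in $t$ at $t = \tau$, which is licit since $\psi \in C^1_c(\R^d,\C)$, returns $\nabla_x \psi \cdot f_{u_{\lambda}} = (\alpha_1 \lambda_1 + \alpha_2 \lambda_2) \psi$ everywhere, completing the argument. The one point to handle with care in this route is the consistency of the branch used to define the real powers of the complex-valued observables, which is precisely what the standing hypothesis $\varphi_{\lambda_1}^{\alpha_1} \varphi_{\lambda_2}^{\alpha_2} \in \Dpazo$ guarantees.
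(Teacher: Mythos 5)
Your proposal is correct, and its first display is in fact the paper's entire proof: the authors simply apply the product and chain rules, substitute the two eigenrelations, and conclude, without commenting on the zero sets of $\varphi_{\lambda_1},\varphi_{\lambda_2}$ or on branch choices for real powers of complex-valued observables. Where you genuinely depart from the paper is in the globalisation step: you observe that the intermediate factors $\varphi_{\lambda_i}^{\alpha_i-1}$ may be singular where an eigenfunction vanishes (relevant whenever $\alpha_i < 1$), and you repair this by integrating the eigenrelations along the constant-control flow --- exactly the device the paper itself uses to prove the inclusion \eqref{eq:SpectralInc1} in Theorem \ref{thm:Eigenvalues} --- obtaining $\varphi_{\lambda_i} \circ \Phi^{u_{\lambda}}_{(\tau,t)} = e^{\lambda_i(t-\tau)}\varphi_{\lambda_i}$ on all of $\R^d$, then taking powers, multiplying, and differentiating at $t = \tau$, which is licit since $\psi \in C^1_c(\R^d,\C)$ by the standing hypothesis. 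The paper's route buys brevity; yours buys an identity valid everywhere, including the interior of the common zero set, at the modest cost of the branch-consistency caveat you correctly flag (and which is harmless here, since the multiplicativity $(e^{\lambda_i(t-\tau)}\varphi_{\lambda_i})^{\alpha_i} = e^{\alpha_i\lambda_i(t-\tau)}\varphi_{\lambda_i}^{\alpha_i}$ is only needed for $t$ near $\tau$, where it holds by continuity of the chosen branch). One shared implicit assumption worth noting: your claim that $\psi$ is a nonzero element of $\Dpazo$, required by Definition \ref{def:Spectrum}, is not automatic --- e.g.\ two eigenfunctions supported on disjoint flow-invariant sets give $\psi \equiv 0$ for $\alpha_1,\alpha_2 > 0$ --- but the paper makes the very same tacit nondegeneracy assumption by calling $\varphi_{\lambda_1}^{\alpha_1}\varphi_{\lambda_2}^{\alpha_2}$ an eigenfunction, so this is not a defect of your argument relative to the original.
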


\begin{proof}
Given $\alpha_1,\alpha_2 \in \R$ for which $\varphi_{\lambda_1}^{\alpha_1} \varphi_{\lambda_2}^{\alpha_2} \in \Dpazo$, one easily gets that 
\begin{equation*}
\begin{aligned}
\nabla_x (\varphi_{\lambda_1}^{\alpha_1} \varphi_{\lambda_2}^{\alpha_2}) \cdot f_{u_{\lambda}} & = \alpha_1 \varphi_{\lambda_1}^{\alpha_1-1} \varphi_{\lambda_2}^{\alpha_2} \; \nabla_x \varphi_{\lambda_1} \cdot f_{u_{\lambda}} + \alpha_2 \varphi_{\lambda_1}^{\alpha_1} \varphi_{\lambda_2}^{\alpha_2-1} \; \nabla_x \varphi_{\lambda_2} \cdot f_{u_{\lambda}} \\
& = (\alpha_1 \lambda_1 + \alpha_2 \lambda_2) \varphi_{\lambda_1}^{\alpha_1} \varphi_{\lambda_2}^{\alpha_2}, 
\end{aligned}
\end{equation*}
which proves the statement. 
\end{proof}



\section{Conclusion}

In this paper, we proposed a novel mathematical framework for Koopman operators associated with control systems, formulated using the tools and concepts of set-valued analysis. The main rationale behind this work was to provide a sound Koopman theory for systems with inputs, without assuming any dynamical evolution on the latter. The next natural step in this line of research is to develop numerical methods based on these theoretical foundations, e.g. to develop a suitable set-valued Extended Dynamic Mode Decomposition. 


\addcontentsline{toc}{section}{Appendices}
\section*{Appendices}


\setcounter{section}{0} 
\renewcommand{\thesection}{A} 
\renewcommand{\thesubsection}{A} 

\subsection{Proof of Corollary \ref{cor:ContinuousFlow}}
\label{section:AppendixContinuity}

\setcounter{equation}{0} \renewcommand{\theequation}{A.\arabic{equation}}

In this appendix section, we detail the proof of Corollary \ref{cor:ContinuousFlow} for the sake of self-containedness.

\begin{proof}[Proof of Corollary \ref{cor:ContinuousFlow}]Consider an element $u(\cdot) \in \Upazo$ as well as a sequence $(u_n(\cdot)) \subset \Upazo$ such that 
\begin{equation*}
\INTSeg{\dsf_U(u_n(t),u(t))}{t}{0}{T} ~\underset{n \to +\infty}{\longrightarrow}~ 0. 
\end{equation*}
This implies that $(u_n(\cdot))$ converges to $u(\cdot)$ in measures (see e.g. \cite[Remark 1.18]{AmbrosioFuscoPallara}), i.e. for each $\delta > 0$ the sets $\Apazo_n^{\delta} := \big\{ t \in [0,T] ~\, \textnormal{s.t.}~ \dsf_U(u_n(t),u(t)) \geq \delta \big\}$ are such that
\begin{equation}
\label{eq:ConvergenceMeasures}
\Lcal^1 (\Apazo_n^{\delta}) ~\underset{n \to +\infty}{\longrightarrow}~ 0.
\end{equation}
In turn, given a compact set $K \subset \R^d$, it follows from \eqref{eq:StabEst} in Theorem \ref{thm:Well-posed} that 
\begin{equation*}
\sup_{n \geq 1} \big| \Phi_{(\tau,t)}^{u_n}(x) \big| \leq R_K 
\end{equation*}
for all $(\tau,t,x) \in [0,T] \times [0,T] \times K$ and some constant $R_K > 0$ which only depends on the magnitudes of $m,T$ and $\sup_{x \in K}|x|$. Then, introducing the notation $K' := B(0,R_K)$, one can use Hypothesis \textnormal{\ref{hyp:H}}-$(ii)$ along with Gr\"onwall's lemma to estimate the discrepancy between the flow maps generated by $u_n(\cdot)$ and $u(\cdot)$ as
\begin{equation}
\label{eq:FlowGronwallEst}
\big| \Phi_{(\tau,t)}^u(x) - \Phi_{(\tau,t)}^{u_n}(x) \big| \leq \bigg( \sup_{x \in K} \INTSeg{\Big| f \Big( \Phi_{(\tau,s)}^u(x) , u(s) \Big) - f \Big( \Phi_{(\tau,s)}^u(x) , u_n(s) \Big) \Big|}{s}{0}{T} \bigg) e^{\ell_{K'} T} 
\end{equation}
for all $(\tau,t,x) \in [0,T] \times [0,T] \times K$ and each $n \geq 1$. At this stage, given some arbitrary $\epsilon > 0$, it follows from Hypotheses \textnormal{\ref{hyp:H}} along with the fact that $[0,T] \times [0,T] \times K \times U$ is a compact metric space that there exists some $\delta >0$ for which
\begin{equation}
\label{eq:FLowContEst1}
\sup_{(\tau,t,x) \in [0,T] \times [0,T] \times K} \Big| f \Big( \Phi_{(\tau,t)}^u(x) , u_1 \Big) - f \Big( \Phi_{(\tau,t)}^u(x) , u_2 \Big) \Big| < \frac{\epsilon}{2T e^{\ell_{K'}T}}
\end{equation}
whenever $u_1,u_2 \in U$ are such that $\dsf_U(u_1,u_2) < \delta$. Moreover, following \eqref{eq:ConvergenceMeasures}, there exists for that same $\delta >0$ some integer $N_{\epsilon,K} \geq 1$ such that 
\begin{equation}
\label{eq:FLowContEst2}
\Lcal^1(\Apazo_n^{\delta}) < \frac{\epsilon}{4m(1+R_K) e^{\ell_{K'}T}}
\end{equation} 
for each $n \geq N_{\epsilon}$. Therefore, by combining \eqref{eq:FLowContEst1} and \eqref{eq:FLowContEst2}, one may infer that
\begin{equation*}
\begin{aligned}
& \INTSeg{\Big| f \Big( \Phi_{(\tau,s)}^u(x) , u(s) \Big) - f \Big( \Phi_{(\tau,s)}^u(x) , u_n(s) \Big) \Big|}{s}{0}{T} \\
&\leq \INTDom{\Big| f \Big( \Phi_{(\tau,s)}^u(x) , u(s) \Big) - f \Big( \Phi_{(\tau,s)}^u(x) , u_n(s) \Big) \Big|}{\Apazo_n^{\delta}}{s} \\
& \hspace{0.4cm} + \INTDom{\Big| f \Big( \Phi_{(\tau,s)}^u(x) , u(s) \Big) - f \Big( \Phi_{(\tau,s)}^u(x) , u_n(s) \Big) \Big|}{[0,T] \backslash \Apazo_n^{\delta}}{s} \\
& \leq 2m( 1 + R_K ) \Lcal^1(\Apazo_n^{\delta}) \\
& \hspace{0.4cm} + \sup_{(\tau,s,x) \in [0,T] \times [0,T] \times K} \Big| f \Big( \Phi_{(\tau,s)}^u(x) , u(s) \Big) - f \Big( \Phi_{(\tau,s)}^u(x) , u_n(s) \Big) \Big| \, \Lcal^1([0,T] \setminus \Apazo_n^{\delta}) \\
& \leq \frac{\epsilon}{e^{\ell_{K'} T}}
\end{aligned}
\end{equation*}
for all $x \in K$ and each $n \geq N_{\epsilon,K}$, where we used Hypothesis \textnormal{\ref{hyp:H}}-$(i)$. Plugging this last inequality in \eqref{eq:FlowGronwallEst} finally yields the uniform estimate   
\begin{equation*}
\sup_{(\tau,t,x) \in [0,T] \times [0,T] \times K} \big| \Phi_{(\tau,t)}^u(x) - \Phi_{(\tau,t)}^{u_n}(x) \big| \leq \epsilon
\end{equation*}
for each $n \geq N_{\epsilon,K}$, which concludes the proof since $K \subset \R^d$ and $\epsilon > 0$ are arbitrary.
\end{proof}


\section*{Acknowledgements}

{\small
This research was part of the programme DesCartes and is supported by the National Research Foundation, Prime Minister's Office, Singapore under its Campus for Research Excellence and Technological Enterprise (CREATE) programme. This work was also co-funded by the European Union under the project ROBOPROX (reg.~no.~CZ.02.01.01/00/22\_008/0004590). The authors thank the anonymous referees for their careful reading and comments which greatly contributed to improve the manuscript.}


\bibliographystyle{plain}
{\footnotesize
\bibliography{References}
}

\end{document}